\documentclass[leqno,10pt]{amsart}
\usepackage{amssymb,amsmath,amsfonts,amsthm,mathtools,latexsym,ifthen,bezier}
\usepackage{graphicx}
\usepackage{undertilde}
\usepackage{amsxtra}
\usepackage{xspace}
\usepackage{url}
\usepackage[english]{babel} 

\usepackage{enumitem}
\usepackage[margin=1in]{geometry}
\usepackage{etoolbox}


\newcommand{\kla}[1]{ {\langle #1 \rangle} }

\newcommand{\st}{\;|\;}

\newcommand{\dom}{ {\rm dom} }
\newcommand{\ran}{ {\rm ran} }
\newcommand{\otp}{ {\rm otp} }

\newcommand{\sub}{\subseteq}

\newfont{\ssi}{cmssi12 at 12pt}

\newcommand{\rest}{{\restriction}}
\newcommand{\cf}{ {\rm cf} }
\newcommand{\On}{ {\rm On} }

\newcommand{\leer}{\emptyset}
\newcommand{\ohne}{\setminus}

\newcommand{\id}{ {\rm id} }

\newenvironment{ea*}{\begin{eqnarray*}}{\end{eqnarray*}}

\newcounter{claimnumber}
\AtEndEnvironment{proof}{\setcounter{claimnumber}{0}}
{\begin{enumerate}[label=(\arabic*)]
\refstepcounter{claimnumber}
\setcounter{enumi}{\value{claimnumber}-1}
\item}%
{\end{enumerate}}

\setbox0=\hbox{$\longrightarrow$}
\newcommand{\To}{\longrightarrow}

\newcommand{\power}{{\mathcal{P}}}

\newcommand{\calC}{\mathcal{C}}

\newcommand{\bG}{{\bar{G}}}

\newcommand{\bM}{{\bar{M}}}
\newcommand{\bP}{{\bar{\P}}}
\newcommand{\bR}{{\bar{R}}}

\newcommand{\bX}{{\bar{X}}}

\newcommand{\bkappa}{{\bar{\kappa}}}

\newcommand{\bmu}{{\bar{\mu}}}

\newcommand{\btheta}{{\bar{\theta}}}

\newcommand{\barf}{{\bar{f}}}

\newcommand{\bp}{{\bar{p}}}

\newcommand{\bN}{{\bar{N}}}
\newcommand{\talpha}{{\tilde{\alpha}}}

\newcommand{\tlambda}{{\tilde{\lambda}}}

\newcommand{\tzeta}{{\tilde{\zeta}}}

\newcommand{\tN}{{\tilde{N}}}

\newcommand{\va}{{\vec{a}}}

\newcommand{\vx}{{\vec{x}}}

\newcommand{\vA}{{\vec{A}}}

\newcommand{\vT}{{\vec{T}}}

\newcommand{\vxi}{{\vec{\xi}}}

\newcommand{\seq}[2]{{\langle#1\;|\;}\linebreak[0]{#2\rangle}}

\renewcommand{\phi}{\varphi}
\newcommand{\card}[1]{\overline{\overline{#1}}}

\newcommand{\ZFC}{\ensuremath{\mathsf{ZFC}}\xspace}

\newcommand{\ZFCm}{\ensuremath{{\ZFC}^-}\xspace}

\newcommand{\AD}{\ensuremath{\mathsf{AD}}}

\newcommand{\V}{\ensuremath{\mathrm{V}}\xspace}

\newcommand{\forces}{\Vdash}

\def\<#1>{\langle#1\rangle}
\newcommand{\restrict}{\upharpoonright}
\newcommand{\B}{{\mathord{\mathbb{B}}}}
\renewcommand{\P}{{\mathord{\mathbb P}}}
\newcommand{\Q}{{\mathord{\mathbb Q}}}

\newcommand{\compose}{\circ}


\newcommand{\SC}{\ensuremath{\mathsf{SC}}\xspace}


\newcommand{\BFA}{\ensuremath{\mathsf{BFA}}}
\newcommand{\BPFA}{\ensuremath{\mathsf{BPFA}}\xspace}

\newcommand{\MP}{\ensuremath{\mathsf{MP}}}

\newcommand{\ColNothing}{\mathrm{Col}}
\newcommand{\Col}[1]{\ColNothing(#1)}

\newcommand{\MPColNothing}[1]{\MP_{\Col{\dot{\kappa}}}}

\newcommand{\CH}{\ensuremath{\mathsf{CH}}\xspace}

\newcommand{\naturals}{{\mathord{\mathbb{N}}}}

\newcommand{\reals}{{\mathord{\mathbb{R}}}}

\newcommand{\Hull}{\mathrm{Hull}} 

\newcommand{\code}[1]{\ulcorner{#1}\urcorner}

\newcommand{\SFP}[1]{\ensuremath{\mathsf{SFP}_{#1}}}

\newcommand{\DSR}[1]{\ensuremath{\mathsf{DSR}{(#1)}}}

\newcommand{\CC}{{\ensuremath{\text{$\sigma$-closed}}}}

\newcommand{\BSCFA}{\ensuremath{\mathsf{BSCFA}}\xspace}


\newcommand{\Goedel}{G\"{o}del\xspace}

\newcommand{\Prikry}{P\v{r}\'{\i}kr\'{y}\xspace}


\hyphenation{Mi-ya-mo-to}

%
%
\newtheorem{thm}{Theorem}[section]
\newtheorem*{thm*}{Theorem} 
\newtheorem{cor}[thm]{Corollary}
\newtheorem{lem}[thm]{Lemma}
\newtheorem{obs}[thm]{Observation}

\newtheorem{fact}[thm]{Fact}

\theoremstyle{definition}
\newtheorem{defn}[thm]{Definition}
\newtheorem{question}[thm]{Question}

\newcommand{\thistheoremname}{}
\newtheorem{genericthm}[thm]{\thistheoremname}

\theoremstyle{remark}

\hyphenation{ab-so-lute-ness}

\newcommand{\sATP}[2]{\text{strong \ensuremath{\mathsf{ATP}_{#2}(#1)}}\xspace}
\newcommand{\wATP}[2]{\ensuremath{\mathsf{ATP}_{#2}(#1)}\xspace}
\newcommand{\SCabove}[1]{\ensuremath{\SC{\restrict}#1}\xspace}

\begin{document}

\title{Aronszajn tree preservation and bounded forcing axioms}

\keywords{Subcomplete forcing, forcing axioms, generic absoluteness, Aronszajn trees, continuum hypothesis}
\subjclass[2010]{03E50, 03E57, 03E35, 03E55, 03E05}

\author[Fuchs]{Gunter Fuchs}
\address[G.~Fuchs]{Mathematics,
          The Graduate Center of The City University of New York,
          365 Fifth Avenue, New York, NY 10016
          \&
          Mathematics,
          College of Staten Island of CUNY,
          Staten Island, NY 10314}
\email{Gunter.Fuchs@csi.cuny.edu}
\urladdr{http://www.math.csi.cuny/edu/$\sim$fuchs}

\thanks{The research of the author has been supported in part by PSC CUNY research grant 61567-00 49 and Simons Collaboration grant 580600. The author also wishes to express his gratitude to the logic group in Bonn for their hospitality in January 2019.}

\date{\today}     					

\begin{abstract}
I investigate the relationships between three hierarchies of reflection principles for a forcing class $\Gamma$: the hierarchy of bounded forcing axioms,  of $\Sigma^1_1$-absoluteness and of Aronszajn tree preservation principles. The latter principle at level $\kappa$ says that whenever $T$ is a tree of height $\omega_1$ and width $\kappa$ that does not have a branch of order type $\omega_1$, and whenever $\P$ is a forcing notion in $\Gamma$, then it is not the case that $\P$ forces that $T$ has such a branch. $\Sigma^1_1$-absoluteness serves as an intermediary between these principles and the bounded forcing axioms.  A special case of the main result is that for forcing classes that don't add reals, the three principles at level $2^\omega$ are equivalent. Special attention is paid to certain subclasses of subcomplete forcing, since these are natural forcing classes that don't add reals.
\end{abstract}
\maketitle

\section{Introduction}
One of the main observations in Fuchs \& Minden \cite{FuchsMinden:SCforcingTreesGenAbs} was that assuming the continuum hypothesis, the bounded forcing axiom for any natural%
\footnote{See Definition \ref{defn:ForcingEquivalenceAndNaturalness} for the meaning of ``natural.''}
class $\Gamma$ of forcing notions that don't add reals is equivalent to the statement that forcing notions in $\Gamma$ cannot add a cofinal branch to any tree of height and width $\omega_1$ that does not have a cofinal branch already (I call this latter property strong $(\omega_1,\omega_1)$-Aronszajn tree preservation).
This characterization was apparently mostly overlooked, as far as I can tell, maybe because the most well-known forcing classes whose forcing axioms have been widely considered in the literature may add reals. Even though the arguments establishing this characterization build on ``folklore'' results, it is worth carrying them out with care, because I did find a false statement on a claimed equivalence between bounded forcing axioms and $(\omega_1,\omega_1)$-Aronszajn tree preservation in the literature. Thus, in the introduction of Zapletal \cite{Zapletal:BNFAmayfail}, it is claimed that ``under the Continuum Hypothesis, it [the bounded forcing axiom for a forcing notion $\P$] is equivalent to the statement that $\P$ does not add any branches to trees of height and width $\omega_1$ which have no branches in the ground model''. No reference is given for this, and it is not true without further assumptions. The following example strongly suggests that the correct extra assumption needed for the equivalence to hold is that the forcing notion in question does not add reals.
Namely,
it is known to be consistent that $\CH$ holds and every Aronszajn tree is special (see \cite{MildenbergerShelah:SpecialisingAronszajnTrees}). But in a model of this theory, any ccc forcing preserves Aronszajn trees of height $\omega_1$ and any width (see the proof of \cite[Theorem 4.23]{FuchsMinden:SCforcingTreesGenAbs}), while the bounded forcing axiom for any forcing that adds a real fails (since it implies the failure of \CH; see \cite[Obs.~4.2(1)]{Fuchs:SCprinciplesAndDefWO}).

Thus, so far, we know that for a forcing notion $\P$, the $(\omega_1,\omega_1)$-Aronszajn tree preservation is equivalent to the bounded forcing axiom for $\P$, under two assumptions: the continuum hypothesis and that $\P$ does not add reals. We have argued above that the assumption that $\P$ does not add reals is indispensable here. I shall show that
in order for there to be a close connection between bounded forcing axioms and Aronszajn tree preservation, \CH seems less relevant than not adding reals. In fact, I will show that the assumption of \CH can be dropped in the abovementioned joint result with Minden, if it is formulated correctly.
The article is organized as follows.

Section \ref{sec:TheGeneralSetup} introduces three hierarchies of principles: $(\omega_1,\lambda)$-Aronszajn tree preservation, the bounded forcing axiom at $(\omega_1,\lambda)$, and $\Sigma^1_1(\omega_1,\lambda)$-absoluteness, for a forcing class $\Gamma$, and presents some known results and simple observations.

Section \ref{sec:Connections} explores the connections between the three hierarchies. Theorem \ref{thm:EquivalenceBtwBFAandSigma1-1-absoluteness} states that the bounded forcing axiom at $(\omega_1,\lambda)$ is equivalent to $\Sigma^1_1(\omega_1,\lambda)$ absoluteness for a forcing class $\Gamma$ and an uncountable cardinal $\lambda$.
Under the additional assumptions that forcing notions in $\Gamma$ don't add countable subsets to $\lambda$ and $\lambda^\omega=\lambda$, the main result, Theorem \ref{thm:CharacterizationOfATP(omega1,lambda)}, states that under these two conditions, these principles are also equivalent to $(\omega_1,\lambda)$-Aronszajn tree preservation.

In the remaining two sections of the article, I work with a concrete forcing class, the class of subcomplete forcing notions. This class was introduced by Jensen \cite{Jensen:SPSCF},  \cite{Jensen2014:SubcompleteAndLForcingSingapore}, and fits perfectly into the context of Aronszajn tree preservation, because subcomplete forcing notions do not add reals, but are iterable, and contain interesting forcing notions such as Namba forcing, \Prikry forcing, all countably closed forcing notions, etc.

In Section \ref{sec:SubclassesOfSCforcing}, the focus is on subclasses of subcomplete forcing notions, namely those that are subcomplete above $\lambda$. I show that these forcing notions are $[\lambda]^\omega$-preserving, and the three properties mentioned above are equivalent for this class, if $\lambda^\omega=\lambda$. It follows that the three properties for subcomplete forcing at $2^\omega$ are equivalent (this is the generalization I was aiming for). I show that while there are iteration theorems for forcing notions that are subcomplete above $\lambda$, their bounded forcing axioms behave differently from those for other forcing classes: Observation \ref{obs:Impossibility} shows that one cannot, in general, force the bounded forcing axiom for subcomplete forcing above $\omega_2$ at $(\omega_1,\omega_2)$ by a forcing notion that is subcomplete above $\kappa$, a strongly uplifting cardinal, and collapses $\kappa$ to $\omega_2$. The corresponding fact holds for the class of proper, or subcomplete, forcing notions, among others.

Finally, in Section \ref{sec:ATPbySCforcing}, I analyze $(\omega_1,\lambda)$-Aronszajn tree preservation under subcomplete forcing systematically, depending on where $\lambda$ lies in relation to $2^\omega$. For $\lambda<2^\omega$, the property is provable in $\ZFC$, as was shown in \cite{FuchsMinden:SCforcingTreesGenAbs}. For $\lambda=2^\omega$, it is equivalent to the bounded forcing axiom for subcomplete forcing at $(\omega_1,\lambda)$. 
Further, I determine the consistency strength of $(\omega_1,(2^\omega)^+)$-Aronszajn tree preservation to be an uplifting cardinal, and I obtain $\AD^{L(\reals)}$ as a lower bound on the consistency strength of $(\omega_1,(2^\omega)^{++})$-Aronszajn tree preservation. All of these consistency strength calculations also apply to the restricted class of subcomplete forcing notions that are countably distributive. I end with some open questions.

\section{Three hierarchies}
\label{sec:TheGeneralSetup}

In this section, I introduce the three hierarchies of interest and explore the relationships between them in a general setting, that is, without referring to any concrete classes of forcing notions.

\subsection{Aronszajn tree preservation}
\label{subsec:ATP}

The main objects of study here are trees of given height and width. Of main interest are trees of height $\omega_1$. Classically, much work has been done on $\omega_1$-trees, that is, trees of height $\omega_1$ all of whose levels are countable. More flexibility is introduced as follows.

\begin{defn}
\label{def:TreesWithWidths}
Let $\kappa$ and $\lambda$ be ordinals. A tree $T$ is a \emph{$(\kappa, {\leq}\lambda)$-tree} if $T$ is a tree of height $\kappa$ with levels of size less than or equal to $\lambda$. I refer to the restriction on the size of the levels of the tree in the second coordinate as the tree's \emph{width}, so that a $(\kappa, {\leq}\lambda)$-tree has width ${\le}\lambda$. I will sometimes drop the ``$\le$'', so that a $(\kappa,\lambda)$-tree is a $(\kappa,{\le}\lambda)$-tree.

Similarly, a \emph{$(\kappa,{<}\lambda)$-tree} is a tree with height $\kappa$ all of whose nonempty levels have size less than $\lambda$, and the width of such a tree is ${<}\lambda$.

A \emph{cofinal branch} in such a tree is a downward closed set of nodes that, when equipped with the restriction of the tree order, forms a well-order of type $\kappa$.

A \emph{$(\kappa,{\le}\lambda)$-Aronszajn tree} is a $(\kappa,{\le}\lambda)$-tree with no cofinal branch, and similarly, a \emph{$(\kappa,{<}\lambda)$}-Aronszajn tree is a \emph{$(\kappa,{<}\lambda)$}-tree with no cofinal branch.
\end{defn}

The key property of interest in this article is the following.

\begin{defn}
\label{defn:ATP}
Let $\Gamma$ be a forcing class, and let $\kappa,\lambda$ be ordinals. Then the \emph{strong Aronszajn preservation principle}, denoted  \emph{\sATP{\kappa,\lambda}{\Gamma},} says that forcing with any forcing notion in $\Gamma$ preserves $(\kappa,{\le}\lambda)$-Aronszajn trees. The principle \sATP{\kappa,{<}\lambda}{\Gamma} is defined similarly.

The weaker form of the principle, the \emph{Aronszajn tree preservation principle}, \wATP{\kappa,\lambda}{\Gamma}, says that whenever $T$ is a $(\kappa,{\le}\lambda)$-Aronszajn tree and $\P$ is a forcing notion in $\Gamma$, there is a $p\in\P$ such that $p\forces_\P$ ``$T$ is a $(\kappa,{\le}\lambda)$-Aronszajn tree.'' Again, \wATP{\kappa,{<}\lambda}{\Gamma} is defined similarly.
\end{defn}

As stated at the outset, the case of main interest here is $\kappa=\omega_1$. Note that the principle $\sATP{\omega,\lambda}{\Gamma}$ holds for any $\lambda$ and any forcing class $\Gamma$, because a tree of height $\omega$ is Aronszajn iff its reversed order is well-founded, and well-foundedness is absolute.

The difference between \sATP{\kappa,{\le}\lambda}{\Gamma} and its weak variant is subtle, and in most naturally encountered cases, these principles are equivalent. I'll introduce some language to make this more precise.

\begin{defn}
\label{defn:ForcingEquivalenceAndNaturalness}
Given a forcing notion $\P$ and a condition $p\in\P$, I write $\P_{{\le}p}$ for the restriction of $\P$ to conditions $q\le p$.

Two forcing notions $\P$ and $\Q$ are \emph{forcing equivalent} if they give rise to the same forcing extensions.

A class $\Gamma$ of forcing notions is \emph{natural} if for every $\P\in\Gamma$ and every $p\in\P$, there is a $\Q\in\Gamma$ such that $\P_{{\le}p}$ is forcing equivalent to $\Q$.
\end{defn}

It is easy to see that for a natural class $\Gamma$ of forcing notions, \sATP{\kappa,\lambda}{\Gamma} and \wATP{\kappa,\lambda}{\Gamma} are equivalent. The following easy observation motivates much of the present work:

\begin{obs}[Folklore]
\label{obs:CCforcingPreservesAronszajnTreesOfAnyWidth}
Countably closed forcing preserves any $(\omega_1,\lambda)$-Aronszajn tree, for any $\lambda$. In other words, the strong Aronszajn tree preservation principle at $(\omega_1,\lambda)$ for countably closed forcing,
$\sATP{\omega_1,\lambda}{\CC}$, holds for every cardinal $\lambda$.
\end{obs}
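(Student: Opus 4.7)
The plan is to argue by contradiction. Suppose $T\in V$ is a $(\omega_1,{\le}\lambda)$-Aronszajn tree and, for contradiction, some $p$ in a $\sigma$-closed $\P$ forces $\dot b$ to be a cofinal branch of $T$. For each $q\le p$ define the ground-model set
\[c_q := \{t\in T : q\forces t\in\dot b\}.\]
Since $\dot b$ is forced to be a downward-closed chain of $T$, so is $c_q$; as a chain in $T$, $c_q$ meets each level of $T$ in at most one node, and hence carries an order type $\delta_q\le\omega_1$. If $\delta_q=\omega_1$ for some $q\le p$, then $c_q\in V$ is itself a cofinal branch of $T$, contradicting the Aronszajn hypothesis, so $\delta_q<\omega_1$ for every $q\le p$.

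The next point is that $\delta_q$ can always be strictly increased below $q$: given $q\le p$ with $\delta_q<\omega_1$, pick any $\beta$ with $\delta_q\le\beta<\omega_1$; since $p\forces \dot b\cap T_\beta\ne\emptyset$, the set $\{r\le p : \exists t\in T_\beta\ r\forces t\in\dot b\}$ is dense below $p$, so there is $q'\le q$ and $t\in T_\beta$ with $q'\forces t\in\dot b$, whence $c_{q'}$ meets $T_\beta$ and so $\delta_{q'}>\delta_q$. Further, $\sigma$-closure of $\P$ yields lower bounds for every strictly decreasing sequence of countable length, since every countable limit ordinal has countable cofinality and one can apply $\sigma$-closure along a cofinal $\omega$-subsequence. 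Combining these two facts, one constructs in $V$ (using \AC) a decreasing sequence $\langle q_\alpha : \alpha<\omega_1\rangle$ below $p$ with $\delta_{q_{\alpha+1}}>\delta_{q_\alpha}$ at successors and with $q_\alpha$ a lower bound of the previous terms at every countable limit stage.

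As $\langle \delta_{q_\alpha} : \alpha<\omega_1\rangle$ is then a strictly increasing $\omega_1$-sequence of countable ordinals, its supremum must be $\omega_1$, so $c^* := \bigcup_{\alpha<\omega_1}c_{q_\alpha}$ is a chain in $T$ of height $\omega_1$---i.e.\ a cofinal branch of $T$---lying in $V$. This contradicts $T$ being Aronszajn in $V$ and establishes $\sATP{\omega_1,\lambda}{\CC}$. Note that the width $\lambda$ plays no role whatsoever in the argument, which is exactly what one should expect from a result stated uniformly in $\lambda$; the only mildly technical ingredient is the upgrade from $\sigma$-closure to $\gamma$-closure for every $\gamma<\omega_1$, which is standard and is needed only to carry the transfinite recursion through countable limit stages.
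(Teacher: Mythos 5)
Your proof is correct. The paper records this observation as folklore and supplies no proof of its own, so there is nothing to compare against line by line; what you give is the standard argument, and every step checks out: $c_q$ is a downward-closed chain because comparability and incomparability of ground-model nodes of $T$ are absolute; the density argument (deciding a node of $\dot b$ on a level $\beta\ge\delta_q$) gives the strict increase $\delta_{q'}>\delta_q$, using monotonicity $q'\le q\Rightarrow c_q\subseteq c_{q'}$; the upgrade from $\sigma$-closure to closure under descending sequences of any countable length via a cofinal $\omega$-subsequence is exactly right and is what makes the length-$\omega_1$ recursion go through at countable limits; and the union $c^*$ is a downward-closed chain in $V$ meeting cofinally many (hence all) levels, contradicting that $T$ is Aronszajn in $V$. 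One feature of your argument worth making explicit: it uses the Aronszajn hypothesis in $V$ in an essential way (to rule out $\delta_q=\omega_1$ and to get the final contradiction), and it proves only preservation of Aronszajn trees, not the stronger claim that no new branches are added to arbitrary $(\omega_1,\lambda)$-trees --- which is as it must be, since by the paper's Observation \ref{obs:Add(omega1,1)AddsBranch} that stronger claim is false for width $2^\omega$ (adding a Cohen subset of $\omega_1$ puts a new branch through ${}^{{<}\omega_1}2$). Your closing remark that $\lambda$ plays no role is likewise the right diagnosis of why the statement holds uniformly in the width.
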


A stronger form of Aronszajn tree preservation would be the property of not adding a new branch to any $(\kappa,\lambda)$-tree $T$. Let's call this property $[T]$-preservation. It is easy to see that the previous observation does not admit this strengthening in general.

\begin{obs}
\label{obs:Add(omega1,1)AddsBranch}
Countably closed forcing may add a cofinal branch to an $(\omega_1,{\le}2^{\omega})$-tree.
\end{obs}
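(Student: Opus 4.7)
The plan is to exhibit an $(\omega_1,{\le}2^\omega)$-tree $T$ together with a countably closed forcing $\P$ that generically adjoins a \emph{new} cofinal branch through $T$. The natural candidate is the full binary tree $T = 2^{<\omega_1}$ whose nodes are functions $s\colon \alpha \to 2$ for $\alpha < \omega_1$, ordered by extension; its $\alpha$-th level has size $2^{|\alpha|} \le 2^\omega$, confirming that $T$ is indeed an $(\omega_1,{\le}2^\omega)$-tree. Note that $T$ has many cofinal branches already in $V$ (one for each $f\colon\omega_1\to 2$ in $V$), so it is very far from Aronszajn; this is compatible with the observation, which only asserts that a new branch is added, not that $T$ is Aronszajn.

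For the forcing I would take $\P = \Add(\omega_1,1)$, i.e., conditions are countable partial functions $p\colon\omega_1 \to 2$, ordered by reverse inclusion. Countable closure is the standard verification: given a decreasing $\omega$-chain of conditions, the union has countable domain and is again a condition, providing a lower bound. Hence $\P\in\CC$.

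Now let $G$ be $\P$-generic over $V$ and set $g = \bigcup G$. Two routine density arguments complete the job: the sets $D_\alpha = \{p\in\P : \alpha \in \dom(p)\}$ are dense, so $g\colon\omega_1\to 2$ is total; and for each $f\colon\omega_1\to 2$ in $V$, the set $\{p\in\P : (\exists \beta\in\dom(p))\ p(\beta)\ne f(\beta)\}$ is dense, so $g\neq f$ and thus $g\notin V$. Consequently $b = \{g\restriction\alpha : \alpha < \omega_1\}$ is a cofinal branch through $T$ lying in $V[G]\setminus V$, since any cofinal branch of $T$ is determined by (and determines) a unique function $\omega_1 \to 2$.

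There is no real obstacle here; the observation is a one-line remark, and the only choice to make is to take a tree wide enough to encode an $\omega_1$-sequence added by a countably closed forcing, for which $2^{<\omega_1}$ is the canonical such object. The substantive point is the contrast with Observation~\ref{obs:CCforcingPreservesAronszajnTreesOfAnyWidth}: countably closed forcing preserves Aronszajn-ness of $(\omega_1,\lambda)$-trees for every $\lambda$, but the strictly stronger property of not adding \emph{any} new cofinal branch to an $(\omega_1,\lambda)$-tree already fails at $\lambda = 2^\omega$, by the cheapest possible example.
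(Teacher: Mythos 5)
Your proof is correct and is exactly the paper's argument: the paper's justification for this observation is the one-line remark that the forcing to add a Cohen subset of $\omega_1$ adds a new branch to the binary tree ${}^{{<}\omega_1}2$, which is precisely your $\Add(\omega_1,1)$ acting on $2^{<\omega_1}$. Your write-up just fills in the routine closure and density details that the paper leaves implicit.
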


Namely, the forcing to add a Cohen subset to $\omega_1$ adds a new branch to the binary tree ${}^{{<}\omega_1}2$. However, countably closed forcing cannot add a cofinal branch to an $(\omega_1,{<}2^\omega)$-tree, and in fact, this generalizes to the class of subcomplete forcing, see Theorem \ref{thm:SCforcingAddsNoNewBranchGeneral}.

\subsection{Bounded forcing axioms}
\label{subsec:BFA}

The second concept of interest for this work is the bounded forcing axiom, originally introduced in Goldstern-Shelah \cite{GoldsternShelah:BPFA} for proper forcing:

\begin{defn}
\label{def:BoundedForcingAxioms}
Let $\Gamma$ be a class of forcing notions, and let $\kappa,\lambda$ be cardinals. Then $\BFA_\Gamma(\kappa,\lambda)$ is the statement that if $\P$ is a forcing notion in $\Gamma$, $\B$ is its complete Boolean algebra, and $\mathcal{A}$ is a collection of at most $\kappa$ many maximal antichains in $\B$, each of which has size at most $\lambda$, then there is an $\mathcal{A}$-generic filter in $\B$, that is, a filter that intersects each antichain in $\mathcal{A}$.
The versions $\BFA_\Gamma(\kappa,{<}\lambda)$ of these principles have the obvious meanings. The most well-known case is where $\kappa=\lambda=\omega_1$, and so, $\BFA_\Gamma$ stands for $\BFA_\Gamma(\omega_1,\omega_1)$.
\end{defn}

The following useful characterization of these axioms is easily seen to be equivalent to the one given in \cite[Thm.~1.3]{ClaverieSchindler:AxiomStar}, see also \cite{BagariaGitmanSchindler:RemarkableWeakPFA}. Here and in the following, if $M$ is a model of a first or second order language, then I write $|M|$ for the universe of $M$.

\begin{fact}
\label{fact:CharacterizationOfBFAkappa}
For a forcing notion $\P$, $\BFA_{\{\P\}}(\omega_1,\lambda)$ is equivalent to the following statement:
if $M=\langle|M|,\in,R_0,R_1,\ldots,R_i,\ldots\rangle_{i<\omega_1}$ is a transitive model for the language of set theory with $\omega_1$ many predicate symbols $\seq{\dot{R}_i}{i<\omega_1}$, of size $\lambda$, and $\phi(x)$ is a $\Sigma_1$-formula such that $\forces_\P\phi(\check{M})$, then there are in $\V$ a transitive $\bM=\kla{|\bM|,\in,\bar{R}_0,\bar{R}_1,\ldots,\bar{R}_i,\ldots}_{i<\omega_1}$ and an elementary embedding $j:\bM\prec M$ such that $\phi(\bM)$ holds.
\end{fact}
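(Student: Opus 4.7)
I plan to prove this biconditional in two directions, one of which is straightforward and one of which carries most of the work.

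For the direction from the model-theoretic statement to $\BFA_{\{\P\}}(\omega_1,\lambda)$, the argument is essentially an encoding. Given a family $\{A_i : i<\omega_1\}$ of maximal antichains in $\B=r.o.(\P)$, each of size at most $\lambda$, I would let $X$ be the transitive closure of $\{\B,\omega_1\}\cup\bigcup_{i<\omega_1}A_i$ (of size $\lambda$, assuming $\lambda\ge\omega_1$) and form the structure $M=\langle X,\in,\dot R_0,\dot R_1,\ldots\rangle_{i<\omega_1}$ with $\dot R_i$ interpreted as $A_i$. The $\Sigma_1$ statement $\phi(M)$ asserting ``there exists a filter on $\B$ meeting every $R_i$'' is forced by $\P$, witnessed by the generic filter. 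By hypothesis, there are in $V$ a transitive $\bM$ and $j:\bM\prec M$ with $\phi(\bM)$. A filter $\bar F$ on $\bar\B=j^{-1}(\B)$ witnessing $\phi(\bM)$ then pushes forward, via upward closure of $j[\bar F]$ in $\B$, to a filter on $\B$; elementarity of $j$ guarantees that this filter meets each $A_i$.

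For the reverse direction, assuming $\BFA_{\{\P\}}(\omega_1,\lambda)$ and a pair $(M,\phi)$ with $\Vdash_\P\phi(\check M)$, I would write $\phi(x)=\exists z\,\psi(x,z)$ with $\psi\in\Delta_0$ and fix a $\P$-name $\dot y$ with $\Vdash_\P\psi(\check M,\dot y)$, chosen to be a nice name supported on a set of size $\le\lambda$. The plan is to design an $\omega_1$-indexed family of maximal antichains in $\B$, each of size at most $\lambda$, so that any generic filter decodes a partial function $j:\omega\to M$ and a set $\bar y\in V$ such that $j$ is elementary and $\psi(\bM,\bar y)$ holds, where $\bM$ is the Mostowski collapse of $j[\omega]$. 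Concretely, for each $n<\omega$ one antichain determines $j(n)$ via a nice name $\dot x_n=\{(x,b_{n,x}) : x\in M\}$; and for each pair $(\chi,\vec n)$, where $\chi$ is a first-order formula in the language of $M$ and $\vec n\in\omega^{<\omega}$, I would add a Tarski–Vaught antichain enforcing that whenever $M\models\exists v\,\chi(v,j(\vec n))$, some $j(m)$ realizes such a witness. Parallel antichains code $\bar y$ from $\dot y$. Applying $\BFA$ delivers the required filter, from which $j$ and $\bar y$ are read off; since $\psi$ is $\Delta_0$ and $\bM$ is transitive, $\Delta_0$-absoluteness yields $\psi(\bM,\bar y)$ in $V$.

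The main obstacle is the reverse direction, specifically the antichain bookkeeping. Two constraints must be met simultaneously: the total number of antichains must be at most $\omega_1$, and each must have size at most $\lambda$. The size bound is handled by the fact that all Boolean values arise from elements of $M$ (of which there are $\lambda$-many). The more delicate count of antichains uses the key observation that even in a language equipped with $\omega_1$-many predicate symbols, each first-order formula is finite and so references only finitely many predicates — hence there are only $\omega_1$-many formulas, each needing only countably many instantiations in the countable skeleton $\omega$ of $\bM$. The final subtlety is that the decoded witness $\bar y$, originally extracted from a name $\dot y\in V^\P$, must end up as a genuine set in $V$ satisfying $\psi(\bM,\bar y)$; this is precisely where transitivity of the Mostowski collapse and the $\Delta_0$-character of $\psi$ close the loop.
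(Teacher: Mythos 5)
Your first direction has the right idea (encode the antichain-meeting problem as a $\Sigma_1$ fact about a size-$\lambda$ structure, reflect it, and push the reflected filter forward along $j$), but one step is false as written: the transitive closure of $\{\B,\omega_1\}\cup\bigcup_{i<\omega_1}A_i$ need not have size $\lambda$, since $\B$ is an element of that set, so the entire complete Boolean algebra $\B$ (which can be arbitrarily large) lands inside the transitive closure. The standard repair is to take an elementary submodel $Y\prec H_\theta$ of size $\lambda$ with $\{\B\}\cup\bigcup_{i<\omega_1}A_i\sub Y$ and pass to its transitive collapse; maximality of the collapsed antichains in the collapsed algebra is absolute, and your push-forward argument then goes through.

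The second direction, however, contains a fatal error: the countable skeleton. Because the language of $M$ has $\omega_1$ many predicates, $M$ need not have \emph{any} countable elementary submodel, so no countable transitive $\bM$ can admit an elementary $j:\bM\prec M$. Concretely, take $M=\kla{L_\lambda,\in,\{\xi\}}_{\xi<\omega_1}$: elementarity forces the sets $j^{-1}[\{\xi\}]$, $\xi<\omega_1$, to be pairwise disjoint and nonempty, so $|\bM|\ge\omega_1$ -- and this is not a pathology, since the paper's applications of this Fact (in the proofs of Theorems \ref{thm:EquivalenceBtwBFAandSigma1-1-absoluteness} and \ref{thm:CharacterizationOfATP(omega1,lambda)}) use exactly such models and rely on $\omega_1\sub\bM$ and $j\rest\omega_1=\id$. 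Correspondingly, your Tarski--Vaught antichains cannot be constructed: they would require a name forced to enumerate a countable elementary submodel of $\check{M}$, and your own bookkeeping exposes the obstruction -- the $\omega_1$ many formulas $\exists v\,\dot{R}_\xi(v)$ each demand a distinct witness among the countably many slots $j(m)$, which is impossible by pigeonhole. Counting \emph{antichains} is not the issue; the countable skeleton is. The repair is a skeleton of length $\omega_1$ (then $\omega_1$ many formulas times $\omega_1$ many finite tuples still gives $\omega_1$ many antichains, each of size at most $\lambda$). Separately, ``parallel antichains code $\bar y$ from $\dot y$'' is not an argument: the witness $y$ and its transitive closure need not lie in $\V$, so what must be decoded is a well-founded extensional relation coding (an elementary submodel of) a transitive set containing $M$ and $y$, and well-foundedness of a relation read off from a filter is \emph{not} automatic, since filters are not countably complete; it has to be certified by additional antichains, e.g.\ ones deciding rank functions on countable initial segments of the enumeration, whose values are countable ordinals and hence fit into antichains of size at most $\lambda$. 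This certification is the real content of the hard direction. Note that the paper itself does not prove this Fact but cites Claverie--Schindler (Theorem 1.3), where precisely such an $\omega_1$-sized decoding with well-foundedness control is carried out; your proposal omits both ingredients.
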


It will turn out that under certain conditions, the characterization of $\BFA_{\{\P\}}(\omega_1,\lambda)$ provided by this fact corresponds to \wATP{\omega_1,\lambda}{\{\P\}}. If one changes the requirement that $\forces_\P\phi(\check{M})$ to just say that there is a $p\in\P$ such that
$p\forces_\P\phi(\check{M})$, then one obtains a strong version of $\BFA(\{\P\},{\le}\lambda)$ that would then correspond to the \sATP{\omega_1,\lambda}{\{\P\}}.

\subsection{$\Sigma^1_1$-absoluteness}

The third property of interest, which will mainly serve as an intermediary between the other two, is the following two-cardinal version of $\Sigma^1_1$-absoluteness. Again, in this article, the first of the two cardinals will usually be $\omega_1$.

\begin{defn}
\label{defn:TwoCardinalSigma1-1Absoluteness}
Let $\Gamma$ be a forcing class, and let $\kappa\le\lambda$ be cardinals. Then \emph{$\Sigma^1_1(\kappa,\lambda)$-absoluteness for $\Gamma$} is the following statement: if $M=\kla{|M|,R_0,R_1,\ldots,R_\xi,\ldots}_{\xi<\kappa}$ is a model of a  first order language $\mathcal{L}$ with $\kappa$ many relation symbols, the cardinality of $|M|$ is $\lambda$, $\phi$ is a $\Sigma^1_1$-sentence over $\mathcal{L}$, and $\P\in\Gamma$ forces that $M\models\phi$, then in $\V$, there is an $\bM\prec M$ (so $\bM$ is an elementary submodel of $M$ with respect to first order formulas) such that $\bM\models\phi$ (this is second order satisfaction).
\end{defn}

Again, one could define a strong version of this principle in which one only assumes that some $p\in\P$ forces that $M\models\phi$. This version would then correspond to strong Aronszajn tree preservation/the strong bounded forcing axiom.

\begin{obs}
\label{obs:TwoCardinalSigma1-1absForCCforcing}
For any cardinal $\lambda$, $\Sigma^1_1(\omega_1,\lambda)$-absoluteness for countably closed forcing holds.
\end{obs}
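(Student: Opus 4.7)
My plan is to derive this observation as a direct corollary of the equivalence between bounded forcing axioms and $\Sigma^1_1$-absoluteness (Theorem~\ref{thm:EquivalenceBtwBFAandSigma1-1-absoluteness}), once I establish in $\ZFC$ that countably closed forcing satisfies the bounded forcing axiom at every level.

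First I would verify that $\BFA_{\{\P\}}(\omega_1,\lambda)$ holds for every countably closed $\P$ and every cardinal $\lambda$, by a straightforward transfinite recursion of length $\omega_1$. Given maximal antichains $\langle A_\alpha\st\alpha<\omega_1\rangle$ in $\B=\mathrm{r.o.}(\P)$ of size at most $\lambda$, I build a descending sequence $\langle p_\alpha\st\alpha<\omega_1\rangle$ in $\P$ with $p_\alpha$ extending (in $\B$) some $a_\alpha\in A_\alpha$. At stage $\alpha<\omega_1$, the countable descending sequence $\langle p_\beta\st\beta<\alpha\rangle$ has a lower bound $q_\alpha\in\P$ by countable closure; maximality of $A_\alpha$ then produces an $a_\alpha$ compatible with $q_\alpha$, and density of $\P$ in $\B$ furnishes a common strengthening $p_\alpha\le q_\alpha$ with $p_\alpha\le_\B a_\alpha$. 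The filter generated by the $p_\alpha$'s meets every $A_\alpha$.

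Combining this with Theorem~\ref{thm:EquivalenceBtwBFAandSigma1-1-absoluteness} then yields $\Sigma^1_1(\omega_1,\lambda)$-absoluteness for countably closed forcing at every uncountable $\lambda$; the case $\lambda\le\omega$ is immediate because countably closed forcing preserves $H_{\omega_1}$, so $\Sigma^1_1$-truth over a structure with finite or countable universe is already absolute between $\V$ and any such extension.

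The difficulty of this approach is not in the observation itself but is packaged inside Theorem~\ref{thm:EquivalenceBtwBFAandSigma1-1-absoluteness}: there one must use the filter supplied by $\BFA$ to produce, in $\V$, both a small elementary submodel $\bM\prec M$ and witnesses $\vS'$ on $\bM$ realizing $\psi(\vec{X})$. The hardest part will be the correct treatment of the existential quantifiers inside $\psi$, since $\bM$ must be closed under Skolem witnesses selected by the filter, forcing one to interleave the L\"owenheim--Skolem construction of $\bM$ with the very antichains the filter is required to meet.
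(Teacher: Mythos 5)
Your proposal is correct, but it takes a genuinely different route from the paper, and its soundness hinges on a dependency check that you never perform. The paper proves the observation directly: fixing a $\P$-name $\tau$ for the second-order witness, it uses countable closure to build in $\V$ an $\omega_1$-chain $\kla{\kla{p_i,X_i,A_i}\st i<\omega_1}$, where $p_i$ forces $\check{X}_i$ to be the least Skolem hull in $\kla{\lambda,\vec{R}\rest i,\tau}$ and $\tau\cap\check{X}_i=\check{A}_i$; the unions then yield $\bM=M|X\prec M$ and a witness $A$ in $\V$. Your route instead proves the folklore \ZFC fact that countably closed forcing satisfies $\BFA(\omega_1,\lambda)$ for every $\lambda$ (the antichain recursion is fine, and the size bound $\lambda$ on the antichains is indeed irrelevant) and then cites Theorem \ref{thm:EquivalenceBtwBFAandSigma1-1-absoluteness}. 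The danger here is circularity: in the paper, Observation \ref{obs:TwoCardinalSigma1-1absForCCforcing} is proved \emph{before} Theorem \ref{thm:EquivalenceBtwBFAandSigma1-1-absoluteness} precisely because it is an ingredient in that theorem's proof --- the direction from $\Sigma^1_1$-absoluteness to $\BFA$ invokes Corollary \ref{cor:ComposingWithCCforcingIsFree}, which (via Lemma \ref{lem:ComposingWithSigma1-1-absoluteForcingIsFree}) rests on this very observation applied in $\V^\P$ to the collapse. So you cannot cite ``the equivalence'' as a black box. What saves your argument is that you only need the direction $\BFA\implies\Sigma^1_1$-absoluteness, and the paper's proof of that direction uses only Fact \ref{fact:CharacterizationOfBFAkappa} (the Claverie--Schindler characterization) and never the observation; with the citation restricted to that direction, the dependency graph is acyclic. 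This restriction must be stated explicitly, since as written your appeal to the full equivalence is circular. What the two approaches buy: the paper's proof is self-contained and available early, which its architecture requires; yours is shorter given the later machinery and isolates a clean, reusable \ZFC fact about countably closed forcing.

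Two smaller points. In your recursion, density of $\P$ in $\B$ gives $p_\alpha$ with $p_\alpha\le_\B q_\alpha\wedge a_\alpha$, hence $p_\alpha\le_\B q_\alpha$, but not $p_\alpha\le_\P q_\alpha$ unless $\P$ is separative; since countable closure concerns $\le_\P$-descending sequences, you should instead note that $p_\alpha\le_\B q_\alpha$ makes $p_\alpha$ and $q_\alpha$ compatible in $\P$ and pass to a common $\le_\P$-extension (equivalently, work with the open dense sets $D_\alpha=\{p\in\P\st \exists a\in A_\alpha\ p\le_\B a\}$, which are dense in $(\P,\le_\P)$). Your handling of $\lambda\le\omega$, which Theorem \ref{thm:EquivalenceBtwBFAandSigma1-1-absoluteness} does not cover, is correct: a second-order witness over a finite or countable structure is coded by a real, so it already lies in $\V$, and $\bM=M$ works.
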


\begin{proof}
Let $\P$ be a countably closed forcing notion, and let $M=\kla{\lambda,\vec{R}}$, where $\vec{R}=\kla{R_\xi\st\xi<\omega_1}$ is a sequence of relations on $\lambda$. We may assume that $R_0={\in}\rest\lambda$. Let's assume that forcing with $\P$ makes some $\Sigma^1_1$ statement $\psi$ in the second order language over $M$ true. There is then some first order formula $\phi$ in the language of $M$ with an extra unary predicate symbol $\dot{A}$ such that the fact that after forcing with $\P$, $M\models\psi$ can be expressed by the assertion that after forcing with $\P$, there is an $A\sub\lambda$ such that $\kla{M,A}\models\phi$. Let $\tau$ be a $\P$-name for such an $A$. Note that in $\V^\P$, the structure $\kla{\check{M},\tau}$ has a canonical set of Skolem functions. It is thus straightforward, using the countable closure of $\P$, to construct in $\V$ a sequence $\kla{\kla{p_i,X_i,A_i}\st i<\omega_1}$ where $\vec{p}$ is a weakly decreasing sequence of conditions in $\P$, $\vec{X}$ and $\vec{A}$ are sequences of subsets of $\lambda$ weakly increasing with respect to inclusion, and for all $i<\omega_1$, $p_i$ forces that $\check{X}_i$ is the least subset of $\lambda$ such that $\kla{\lambda,\vec{R}\rest i,\tau}|X\prec\kla{\lambda,\vec{R}\rest i,\tau}$ and that $\tau\cap\check{X}_i=\check{A}_i$. It is then easy to check that setting $A=\bigcup_{i<\omega_1}A_i$, $X=\bigcup_{i<\omega_1}X_i$ and $\bM=M|X$ (the restriction of $M$ to $X$), it follows that $\bM\prec M$ and $\kla{\bM,A}\models\phi$, which means that $\bM\models\psi$, as required by Definition \ref{defn:TwoCardinalSigma1-1Absoluteness}.
\end{proof}

The following is a transitivity property for $\Sigma^1_1$ absoluteness. The analog holds for bounded forcing axioms as well, but that will not be needed here.

\begin{lem}
\label{lem:ComposingWithSigma1-1-absoluteForcingIsFree}
Let $\lambda$ be a cardinal, $\P$ a forcing notion and $\dot{\Q}$ a $\P$-name such that $\forces_\P$``$\Sigma^1_1$-absoluteness for $\dot{\Q}$ holds.''
Then the following are equivalent:
\begin{enumerate}[label=(\arabic*)]
\item
\label{item:Sigma1-1absForP}
$\Sigma^1_1(\omega_1,\lambda)$-absoluteness for $\{\P\}$,
\item
\label{item:Sigma1-1absForP*Qdot}
$\Sigma^1_1(\omega_1,\lambda)$-absoluteness for $\{\P*\dot{\Q}\}$.
\end{enumerate}
\end{lem}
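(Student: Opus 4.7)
The plan is to prove each direction separately, with the nontrivial direction going through the observation that the existence of an elementary submodel satisfying a $\Sigma^1_1$-sentence is itself $\Sigma^1_1$-expressible.

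For the easy direction \ref{item:Sigma1-1absForP*Qdot}$\Rightarrow$\ref{item:Sigma1-1absForP}, I would use that $\Sigma^1_1$-truth about a ground-model structure is upward absolute under iterated forcing: if $\phi=\exists Y\,\theta(Y)$ is $\Sigma^1_1$ and $\forces_\P\phi(\check M)$, then for any $V$-generic $G*H$ for $\P*\dot{\Q}$, any witness $A\in V[G]$ to $\theta$ still lies in $V[G][H]$ and still makes $\langle M,A\rangle\models\theta$ by absoluteness of first-order truth. Hence $\P*\dot{\Q}$ forces $M\models\phi$, and \ref{item:Sigma1-1absForP*Qdot} supplies the required $\bM\prec M$ in $V$.

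For the main direction \ref{item:Sigma1-1absForP}$\Rightarrow$\ref{item:Sigma1-1absForP*Qdot}, the key observation is that the statement $\psi$: ``there is some $N\prec M$ such that $N\models\phi$'' is itself expressible as a single $\Sigma^1_1$-sentence about $M$ in the same language $\mathcal{L}$. Granting this, suppose $\P*\dot{\Q}$ forces $M\models\phi$. For any $V$-generic $G\subseteq\P$, in $V[G]$ the quotient $\dot{\Q}^G$ forces $M\models\phi$, so by the hypothesized $\Sigma^1_1$-absoluteness for $\dot{\Q}$ in $V[G]$, there is $\bM\prec M$ in $V[G]$ with $\bM\models\phi$; that is, $\psi$ holds in $V[G]$. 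Since $G$ was arbitrary, $\P$ forces $\psi(\check M)$. Applying \ref{item:Sigma1-1absForP} to $\psi$ yields some $\bM_1\prec M$ in $V$ with $\bM_1\models\psi$, and unwinding $\psi$ gives some $\bM_0\prec\bM_1$ in $V$ with $\bM_0\models\phi$; by transitivity of elementarity, $\bM_0\prec M$ is the desired witness for \ref{item:Sigma1-1absForP*Qdot}.

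The main technical point to verify is that $\psi$ really is $\Sigma^1_1$ over $\mathcal{L}$. Writing $\phi=\exists Y\,\theta(Y)$, $\psi$ reads ``$\exists X\subseteq|M|\ \exists Y\subseteq X$ such that $M|X\prec M$ and $\langle M|X,Y\rangle\models\theta$'', which has the right quantifier shape provided one can express ``$M|X\prec M$'' as a first-order condition on $(M,X)$. With $\omega_1$-many relation-symbol slots available, I would encode all Skolem functions of $M$ into one additional predicate, reducing $M|X\prec M$ to closure of $X$ under that predicate; this expansion is harmless, since elementarity between the expansions and between the reducts coincide. One also needs that $\P$ preserves $\omega_1$ and, up to passing to a substructure of the right cardinality, $\lambda$, so that the indices of the relations and the width bound of the absoluteness principle remain meaningful across $V$, $V[G]$, and $V[G*H]$; this is implicit in both hypotheses.
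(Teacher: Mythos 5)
Your overall architecture matches the paper's: the easy direction is upward absoluteness of $\Sigma^1_1$ truth, and the hard direction applies the hypothesis on $\dot{\Q}$ inside $\V[G]$, then expresses ``there is an elementary submodel of $M$ satisfying $\phi$'' as a single $\Sigma^1_1$ statement about (an expansion of) $M$ and applies (1) to it. But the step you yourself flag as ``the main technical point'' is exactly where the sketch breaks down, and the device you propose for it does not work as stated. The language of $M$ has $\omega_1$ many relation symbols, hence $\omega_1$ many formulas, hence $\omega_1$ many Skolem functions. To pack these into \emph{one} predicate you must index them by formula codes that are \emph{elements} of the structure (and code tuples of unbounded arity by elements), and then ``$X$ is closed under the Skolem predicate'' is equivalent to ``$M|X\prec M$'' only if all $\omega_1$ many codes actually appear where they are needed. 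This produces two concrete failures in your argument. First, in $\V[G]$: the hypothesis applied to $M$ gives some $\bar{M}\prec M$, but an elementary submodel need \emph{not} be closed under any particular choice of Skolem functions (Skolem functions involve arbitrary choices and are not definable), so $\bar{M}$ need not witness your $\psi$; you must instead apply the hypothesis to the Skolem expansion of $M$, and even then the resulting submodel is closed under the Skolem function with index $i$ only when $i$ belongs to the submodel. Second, in $\V$: the witness $X$ extracted from applying (1) is only closed under Skolem functions whose indices lie in the ambient elementary submodel (or in $X$, depending on how you formulate closure), so you cannot conclude $M|X\prec M$ unless all $\omega_1$ many indices are guaranteed to be present. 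Your justification that ``elementarity between the expansions and between the reducts coincide'' is false in the direction you would need (reduct elementarity does not imply expansion elementarity) and in any case does not address this issue.

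The missing idea, which is the real content of the paper's proof, is to make every formula code \emph{definable} in the structure: the paper arranges (harmlessly, since $\omega_1$ many symbol slots are available) that for every $a\in L_{\omega_1}$ some predicate $R_\xi$ equals $\{a\}$, so that \emph{every} elementary submodel in the full language contains $L_{\omega_1}$ and hence all codes; it then renders elementarity via a truth predicate $T$ and the Tarski--Vaught criterion, quantifying in a first-order way over codes. The truth-predicate formulation has the further advantage that the Tarski--Vaught condition \emph{follows} from genuine elementarity, so in the $\V[G]$ step the hypothesis can be applied to $M$ itself, avoiding your first problem entirely. Your Skolem-function variant can be repaired by adding the same singleton predicates and applying the hypothesis to the expanded structure, but without that device the proposed $\Sigma^1_1$ rendering of ``$M|X\prec M$'' is simply incorrect. (Your closing remark about preservation of $\omega_1$ and $\lambda$ is reasonable but tangential; note that (1) itself already implies that $\P$ preserves $\omega_1$.)
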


\begin{proof}
The substantial direction is \ref{item:Sigma1-1absForP}$\implies$\ref{item:Sigma1-1absForP*Qdot}. Suppose that $M$ is a structure with universe $L_\lambda$ (any set of size $\lambda$ works), for a first order language with $\omega_1$ many symbols and $\psi$ is a $\Sigma^1_1$ sentence over that language such that $\P*\dot{\Q}$ forces that $M\models\psi$. We may assume that the symbols in the language are the predicate symbols $\seq{\dot{R}_i}{i<\omega_1}$, that $\dot{R}_i$ is interpreted as $R_i$ in $M$, and by adding more predicate symbols if necessary, we may assume that for every $a\in L_{\omega_1}$, there is a $\xi<\omega_1$ such that $R_\xi=\{a\}$.

Let $G*H$ be arbitrary $\P*\dot{\Q}$-generics. Then $H$ is generic over $\V[G]$ for $\dot{\Q}^G$, and since by assumption, in $\V[G]$, $\Sigma^1_1(\omega_1,\lambda)$-absoluteness for $\dot{\Q}^G$ holds,
it follows that in $\V[G]$ there is an $X\sub M$ such that $M|X\prec M$ and $M|X\models\psi$.
Note that by our assumption on the predicates in the language of $M$, it follows that $L_{\omega_1}\sub X$.

We can now express the existence of such an $X$ as a $\Sigma^1_1$ statement over the structure $M^+$, which is $M$ equipped with a truth predicate for all formulas of the language of $M$. Writing $T$ for this truth predicate, we have that for every G\"{o}del number $\code{\phi}$ of a formula of the language of $M$, and every tuple $\vec{a}$ of the right arity, $M^+\models T(\code{\phi},\kla{\vec{a}})$ iff $M\models\phi(\vec{a})$. We may organize it so that $\code{\phi}\in L_{\omega_1}$. The existence of an $X$ as above can now be expressed in a $\Sigma^1_1$ way over $M^+$ by saying: there are a $Y$ and an $A$ (second order quantifications over $M$) such that $Y\neq\leer$ and for every tuple $\vec{a}\in Y$ and every formula $\code{\exists v\phi(\vec{x})}$ of matching arity, if $T(\code{\exists v\phi,\kla{\vec{a}}})$ holds, then there is a $b\in Y$ with $T(\phi,\kla{b,\vec{a}})$. This expresses that $M|Y\prec M$, by the Tarsk\'{\i}-Vaught criterion. To express that $M|Y\models\psi$ (this is second order satisfaction), one can just say that $A\sub Y$ and $\psi^Y(A)$ holds, the relativization of $\psi$ to $Y$. Let's denote this $\Sigma^1_1$ statement over $M^+$ by $\chi$. Note that $M^+\in\V$, and in $\V[G]$, $M^+\models\chi$. Thus, by \ref{item:Sigma1-1absForP}, there is a $Z$ in $\V$ such that $M^+|Z\prec M^+$ and $M^+|Z\models\chi$. Let $A,Y$ witness that $M^+|Z\models\chi$. Then $A\sub Y\sub Z$. By our assumption on $\vec{R}$, it follows that $L_{\omega_1}\sub Z$, so that every G\"{o}del number of a formula of the language of $M$ is in $Z$. As a result, since $M^+\models\chi$, and letting $\bM=M|Y$, it follows that $\bM\prec M$, and $\chi$ explicitly states that $\bM\models\psi$. So $\bM$ is as required by Definition \ref{defn:TwoCardinalSigma1-1Absoluteness}.

The converse direction \ref{item:Sigma1-1absForP*Qdot}$\implies$\ref{item:Sigma1-1absForP} is  immediate: if $M$ is a model of size $\lambda$ of a first order language of size $\omega_1$ such that in $\V^\P$, some $\Sigma^1_1$ formula $\psi$ is true in $M$, then by upwards absoluteness, this is still true in $\V^{\P*\dot{\Q}}$, and so, by \ref{item:Sigma1-1absForP*Qdot}, there is an $\bM\prec M$ in $\V$ with $\bM\models\psi$, as wished.
\end{proof}

Note that the implication \ref{item:Sigma1-1absForP*Qdot}$\implies$\ref{item:Sigma1-1absForP} in the previous lemma holds in general, for arbitrary $\P$ and $\dot{\Q}$, where $\dot{\Q}$ is a $\P$-name for a notion of forcing. The following corollary is an immediate consequence of Observation \ref{obs:CCforcingPreservesAronszajnTreesOfAnyWidth} and Lemma \ref{lem:ComposingWithSigma1-1-absoluteForcingIsFree}.

\begin{cor}
\label{cor:ComposingWithCCforcingIsFree}
Let $\P$ be a forcing notion, $\dot{\Q}$ a $\P$-name for a countably closed forcing notion and $\lambda$ a cardinal. Then the following are equivalent:
\begin{enumerate}[label=(\arabic*)]
\item
\label{item:Sigma1-1absForPagain}
$\Sigma^1_1(\omega_1,\lambda)$-absoluteness for $\{\P\}$,
\item
\label{item:Sigma1-1absForP*QdotAgain}
$\Sigma^1_1(\omega_1,\lambda)$-absoluteness for $\{\P*\dot{\Q}\}$.
\end{enumerate}
\end{cor}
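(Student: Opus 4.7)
The plan is to derive the corollary directly from Lemma \ref{lem:ComposingWithSigma1-1-absoluteForcingIsFree} by using Observation \ref{obs:TwoCardinalSigma1-1absForCCforcing} to verify the lemma's hypothesis. The key point is that the observation is proved in \ZFC with no extra assumptions: for every cardinal $\lambda$, $\Sigma^1_1(\omega_1,\lambda)$-absoluteness for countably closed forcing holds. This is therefore a theorem schema that relativizes to any model of \ZFC, in particular to any generic extension of $V$.

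First, I would apply this relativization to a $\P$-generic extension. If $G$ is $\P$-generic over $V$, then by the hypothesis on $\dot{\Q}$, the forcing notion $\dot{\Q}^G$ is countably closed in $V[G]$. Applying Observation \ref{obs:TwoCardinalSigma1-1absForCCforcing} inside $V[G]$ then yields that $\Sigma^1_1(\omega_1,\lambda)$-absoluteness for $\{\dot{\Q}^G\}$ holds in $V[G]$. Since $G$ was arbitrary, this is equivalent to the forcing statement $\forces_\P$ ``$\Sigma^1_1(\omega_1,\lambda)$-absoluteness for $\dot{\Q}$ holds,'' which is precisely the hypothesis of Lemma \ref{lem:ComposingWithSigma1-1-absoluteForcingIsFree}.

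Having verified the hypothesis, I would invoke Lemma \ref{lem:ComposingWithSigma1-1-absoluteForcingIsFree} directly to conclude the equivalence of \ref{item:Sigma1-1absForPagain} and \ref{item:Sigma1-1absForP*QdotAgain}. The substantial direction \ref{item:Sigma1-1absForPagain}$\implies$\ref{item:Sigma1-1absForP*QdotAgain} is where the hypothesis matters; the converse, as the author remarks following the lemma, holds unconditionally for any $\P$ and $\P$-name $\dot{\Q}$ by upward absoluteness of $\Sigma^1_1$ assertions under further forcing. I do not anticipate any genuine obstacle: the only step requiring mild care is articulating that Observation \ref{obs:TwoCardinalSigma1-1absForCCforcing} is a \ZFC-theorem and therefore available inside $V[G]$, so that the hypothesis of the lemma can be read off as a statement forced by $\P$.
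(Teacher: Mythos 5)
Your proposal is correct and is essentially the paper's own argument: the paper derives the corollary in one line from Lemma \ref{lem:ComposingWithSigma1-1-absoluteForcingIsFree}, with the hypothesis of that lemma supplied by the \ZFC-theorem that $\Sigma^1_1(\omega_1,\lambda)$-absoluteness holds for countably closed forcing, relativized to $\V^\P$ exactly as you describe. (The paper's text actually cites Observation \ref{obs:CCforcingPreservesAronszajnTreesOfAnyWidth} rather than Observation \ref{obs:TwoCardinalSigma1-1absForCCforcing}, but this is evidently a slip, and your citation is the correct one.)
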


Note that the assumption that $\dot{\Q}$ is countably closed in $\V^\P$ is not needed for the implication \ref{item:Sigma1-1absForP*QdotAgain}$\implies$\ref{item:Sigma1-1absForPagain} in the previous corollary.

\section{Connections between the hierarchies}
\label{sec:Connections}

In order to establish a close connection between $\Sigma^1_1$-absoluteness and the property characterizing bounded forcing axioms stated in Fact \ref{fact:CharacterizationOfBFAkappa}, I will use a translation procedure between first order truth in $H_{\lambda^+}$ and second order truth in $L_\lambda$. 
This kind of translation is part of the folklore, but the details are important here, so I will include the construction. For a less general prototype, see \cite{FuchsMinden:SCforcingTreesGenAbs}. The following definition allows us to code members of $H_{\lambda^+}$ by subsets of $\lambda\times\lambda$.

\begin{defn}
\label{defn:lambda-code}
Let $\lambda$ be an ordinal.
A $\lambda$-\emph{code} is a pair $\kla{R,\alpha}$, where $R\subset\lambda\times\lambda$, $\alpha<\lambda$ and $\kla{\lambda,R}$ is extensional and well-founded.

If $\kla{R,\alpha}$ is a $\lambda$-code, then let $U_R$, $\sigma_R$ be the unique objects (given by Mostowski's isomorphism theorem) such that $U_R$ is transitive and $\sigma_R:\kla{U_R,{\in}\rest U_R}\To\kla{\lambda,R}$ is an isomorphism. The \emph{set coded by $\kla{R,\alpha}$} is
\[c_{R,\alpha}=\sigma_R^{-1}(\alpha).\]
\end{defn}

Clearly, every member of $H_{\lambda^+}$ is coded by a $\lambda$-code, and every set that's coded by a $\lambda$-code is a member of $H_{\lambda^+}$. Using codes, $\Sigma_0$ statements over $\kla{H_{\lambda^+},\in}$ can essentially be translated into $\Sigma^1_1$ statements over $\lambda$, if one equips $\lambda$ with a predicate $E$ so that $\kla{\lambda,E}$ satisfies a sufficient rudimentary fragment of set theory.%
\footnote{Here and at many places to follow, for ease in readability, when $U$ is a transitive set or class, I write $\kla{U,{\in}}$, when I really mean $\kla{U,{\in}\rest U}$.}
I find it convenient to work with $L_{\lambda}$ instead of $\lambda$ here. In the statement of the following observation, when writing $\lambda^+$, I mean the least cardinal greater than $\lambda$, even if $\lambda$ itself may not be a cardinal.

\begin{obs}
\label{obs:Translation}
Let $\varphi(v_0,\ldots,v_{n-1})$ be a $\Sigma_0$-formula in the language of set theory. Then there is a $\Sigma^1_1$-formula $\varphi^c$ in the corresponding second order language, with free variables $X_0,x_0\ldots,X_{n-1},x_{n-1}$ (upper case variables being second order and lower case ones being first order) such that the following holds:

Whenever $\lambda$ is an ordinal such that $L_\lambda\models\ZFCm$,
$a_0,\ldots,a_{n-1}\in H_{\lambda^+}$ and $\kla{R_0,\alpha_0},\ldots,\kla{R_{n-1},\alpha_{n-1}}$ are $\lambda$-codes such that $a_i$ is coded by $\kla{R_i,\alpha_i}$, for $i<n$, then
\[\kla{H_{\lambda^+},\in}\models\varphi(a_0,\ldots,a_{n-1}) \iff
\kla{L_{\lambda},\in}\models\varphi^c(R_0,\alpha_0,\ldots,R_{n-1},\alpha_{n-1}).\]
Note that the satisfaction relation on the left is first order while the one on the right is second order.
\end{obs}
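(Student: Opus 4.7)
Rather than proceeding by induction on the complexity of $\varphi$ (which would require awkward handling of negations of atomic formulas, since $\Sigma^1_1$ is not closed under negation), my plan is to give a uniform construction of $\varphi^c$ via a single block of second-order existential quantifiers. Let $\bar\varphi$ denote the formula in the language $\{E\}$ obtained from $\varphi$ by replacing every occurrence of $\in$ with a fresh binary predicate symbol $E$; since $\varphi$ is $\Sigma_0$, $\bar\varphi$ is first-order, with bounded quantifiers now relativized to $E$-extensions.

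I would define $\varphi^c(X_0, x_0, \ldots, X_{n-1}, x_{n-1})$ as the assertion that there exist second-order parameters $S \subseteq \lambda \times \lambda$ and functions $Y_0, \ldots, Y_{n-1}$ from $\lambda$ to $\lambda$, together with first-order parameters $\beta_0, \ldots, \beta_{n-1} < \lambda$, such that: (i) $(\lambda, S)$ is extensional and well-founded (the latter witnessed by an additional second-order rank-function parameter, as discussed below); (ii) each $Y_i$ is an $S$-isomorphic embedding of $(\lambda, X_i)$ with $Y_i(x_i) = \beta_i$; and (iii) $(\lambda, S) \models \bar\varphi(\beta_0, \ldots, \beta_{n-1})$. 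Once the second-order witnesses are existentially quantified at the front, clauses (i)--(iii) are first-order in the remaining parameters, so $\varphi^c$ is $\Sigma^1_1$.

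The verification of the stated equivalence uses the uniqueness of Mostowski collapses. For the backward direction ($\varphi^c \Rightarrow \varphi$), given witnesses to $\varphi^c$, the extensional well-founded relation $(\lambda, S)$ collapses via some $\pi$ to a transitive set $M^*$; clause (ii) together with the uniqueness of the Mostowski collapse forces $\pi(\beta_i) = c_{X_i,x_i} = a_i$; then clause (iii) combined with $\Sigma_0$-absoluteness for the transitive set $M^* \supseteq \TC(\{a_0,\ldots,a_{n-1}\})$ yields $\varphi(a_0, \ldots, a_{n-1})$ in $V$, hence in $H_{\lambda^+}$. For the forward direction ($\varphi \Rightarrow \varphi^c$), take $M = \TC(\{a_0, \ldots, a_{n-1}\}) \in H_{\lambda^+}$, of cardinality at most $\lambda$; inject $M$ into $\lambda$ via some $\iota$, define $S$ as the image of $\in \restrict M$ under $\iota$, set $\beta_i = \iota(a_i)$, and define each $Y_i$ as $\iota$ composed with the collapse of $(\lambda, X_i)$.

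The main obstacle is clause (i): well-foundedness of a relation on $\lambda$ is naturally $\Pi^1_1$. The standard remedy is to include a rank function $\rho : \lambda \to \text{Ord}$ as a second-order witness; the difficulty is that second-order quantifiers over $L_\lambda$ only see ordinals ${<}\lambda$, whereas a well-founded relation on $\lambda$ can have rank approaching $\lambda^+$. This is where the hypothesis $L_\lambda \models \ZFCm$ enters: in the forward direction, one chooses the injection $\iota$ so that the resulting $(\lambda, S)$ and its rank function are absorbed into $L_\lambda$'s ordinal structure, exploiting that the ranks arising from $\TC(\{a_0, \ldots, a_{n-1}\})$ can be recomputed within $L_\lambda$. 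In borderline cases where the natural rank of $(\lambda, S)$ reaches $\lambda$, one instead exploits that the $S$-closure of $\{\beta_0, \ldots, \beta_{n-1}\}$ is covered by the $Y_i$-images of the well-founded $(\lambda, X_i)$'s, so well-foundedness of the relevant substructure is inherited from these embeddings, and only extensionality of $(\lambda, S)$ must be enforced as a first-order clause.
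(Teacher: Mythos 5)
Your proposal as written has two genuine gaps, both fatal to the backward direction of the equivalence. First, clause (i) cannot be implemented: well-foundedness of $\kla{\lambda,S}$ cannot be witnessed by a rank function visible to second-order quantification over $L_\lambda$. The ordinals of $L_\lambda$ are exactly those below $\lambda$, while an extensional well-founded relation on $\lambda$ coding an element of $H_{\lambda^+}$ can have rank arbitrarily large below $\lambda^+$ (take a $\lambda$-code for an ordinal $a_0$ with $\lambda\le a_0<\lambda^+$); so in general there is no rank function for $S$ into any ordinal that the structure $L_\lambda$ can see. The hypothesis $L_\lambda\models\ZFCm$ cannot repair this: the obstruction is the ordinal height of $\TC(\{a_0,\dots,a_{n-1}\})$, not definability. (This is precisely why the paper's own proof never existentially quantifies over a new code; a $\Sigma^1_1$ expression of well-foundedness is only used later, in the proof of Theorem 3.3, for $\omega_1$-codes, where the uncountable cofinality of $\omega_1$ is what makes it possible.) Second, clause (ii) is too weak: an isomorphic embedding $Y_i$ of $\kla{\lambda,X_i}$ into $\kla{\lambda,S}$ says nothing about points of $\lambda$ lying $S$-below elements of the image, so neither the identification $\pi(\beta_i)=a_i$ (Mostowski uniqueness applies to the collapse of an $S$-downward-closed set, not of an arbitrary isomorphic copy) nor the $\Sigma_0$-transfer goes through. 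Concretely, with $a_0=\leer$ and $\varphi(v_0)=\exists u\in v_0\,(u=u)$, one can choose $S$ extensional and even well-founded together with an embedding $Y_0$ such that $\beta_0=Y_0(x_0)$ has $S$-predecessors outside $\ran(Y_0)$; then $\kla{\lambda,S}\models\bar\varphi(\beta_0)$ while $\varphi(a_0)$ fails.

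Both gaps are closed by what you call the fallback, which in fact must be the entire proof (a case distinction is not coherent here anyway: $\varphi^c$ must be a single formula fixed before the parameters are given). Define $\varphi^c$ to say: there exist $S$, $Y_0,\dots,Y_{n-1}$ and $\beta_0,\dots,\beta_{n-1}$ such that $\kla{\lambda,S}$ is extensional, each $Y_i$ is an isomorphism of $\kla{\lambda,X_i}$ onto an $S$-downward-closed subset of $\lambda$ with $Y_i(x_i)=\beta_i$, and $\kla{\lambda,S}\models\bar\varphi(\beta_0,\dots,\beta_{n-1})$; global well-foundedness of $S$ is never demanded. For the backward direction, the union $W$ of the sets $\ran(Y_i)$ is $S$-downward closed, and $S\cap W^2$ is extensional and well-founded (an $S$-descending chain starting in $W$ stays in $W$ and pulls back along some $Y_i$), so it collapses to a transitive set, sending $\beta_i$ to $c_{X_i,x_i}=a_i$; since satisfaction of $\Sigma_0$ formulas relativizes to $S$-downward-closed sets containing the parameters, $\varphi(a_0,\dots,a_{n-1})$ follows by $\Sigma_0$-absoluteness. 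For the forward direction, take $U=\bigcup_{i<n}U_{X_i}$ (not $\TC(\{a_0,\dots,a_{n-1}\})$, on which your $Y_i$ would not be total), a bijection $\iota:U\To\lambda$, let $S$ be the $\iota$-image of ${\in}\rest U$, and set $Y_i=\iota\circ\sigma_{X_i}^{-1}$, $\beta_i=\iota(a_i)$. With these corrections your argument is sound and genuinely different from the paper's: the paper proceeds by induction on $\varphi$, handling negated atomic formulas via maximal partial isomorphisms and pulling second-order quantifiers past bounded first-order ones by coding $\lambda$-sequences of subsets of $\lambda$ as single subsets; your single-ambient-code formulation avoids that induction entirely, at the price of the care about downward closure described above.
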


\begin{proof}
The construction of $\varphi^c$ proceeds by recursion on $\varphi$. I will assume that $\varphi$ is presented in such a way that the only subformulas of $\varphi$ that are negated are atomic. Any formula can be written in this form. In the following, lower/upper case variables will always stand for first/second order variables.

If $\varphi$ is of the form $v_0=v_1$, then $\varphi^c(X_0,x_0,X_1,x_1)$ is defined in such a way that it expresses: 
there is an injective function $F:\lambda\To\lambda$ with $F(x_0)=x_1$, such that whenever $\beta_0 X_0 \beta_1\ldots X_0 \beta_m X_0 x_0$, then $F(\beta_0)X_1F(\beta_1)\ldots X_1F(\beta_m)R_1x_1$ and vice versa. Expressing the existence of such a function requires a second order existential quantification. Hence, the resulting formula $\varphi^c(X_0,x_0,X_1,x_1)$ can be written as a $\Sigma^1_1$ formula.

If $\varphi$ is of the form $v_0\in v_1$, then $\varphi^c(X_0,x_0,X_1,x_1)$ is defined to express: 
there is a $\beta<\lambda$ such that $\beta X_1 x_1$, and such that the sentence of the form $(v_0=v_1)^c$ holds of $X_0,x_0,X_1,\beta$ (reducing to the previous case). The second order existential quantification occurring in $(v_0=v_1)^c$ can be pushed in front of the first order quantification (``there exists a $\beta<\lambda$''), in this case simply because both are existential quantifications.

If $\varphi$ is of the form $\neg(v_0=v_1)$, then $\varphi^c(X_0,x_0,X_1,x_1)$ is defined to express: 
there are $U_0,U_1,F$ such that $U_0$ is closed under $X_0$-predecessors, $U_1$ is closed under $X_1$-predecessors and $F:\kla{U_0,X_0\cap U_0^2}\To\kla{U_1,X_1\cap U_1^2}$ is a maximal isomorphism, meaning that $F$ cannot be expanded beyond $U_0$, and it is not the case that $x_0\in U_0$, $x_1\in U_1$ and $F(x_0)=x_1$. Here, $F:\kla{U_0,X_0\cap U_0^2}\To\kla{U_1,X_1\cap U_1^2}$ being a maximal isomorphism is expressible in a first order way as follows: for any $z\in\lambda\ohne U_0$ and any $z'\in\lambda\ohne U_1$, if one lets $U_0'=U_0\cup\{z\}$, $U_1'=U_1\cup\{z'\}$ and defines $F':U_0'\To U_1'$ by $F'\rest U_0=F$ and $F'(z)=z'$, then it is not the case that (a) $U_0'$ is closed under $X_0$-predecessors, (b) $U_1'$ is closed under $X_1$-predecessors and (c) $F':\kla{U'_0,X_0\cap {U'_0}^2}\To\kla{U'_1,X_1\cap {U'_1}^2}$ is an isomorphism.

If $\varphi$ is of the form $\neg(v_0\in v_1)$, then this can be expressed equivalently by $\forall v\in v_1 \neg (v_0=v)$. We already know how to translate $\neg(v_0=v)$, and we can then use the definition in the case of bounded quantification below.

The inductive steps corresponding to the logical connectives $\land$ and $\lor$ can be dealt with in the obvious way, setting $(\varphi\land\psi)^c=\varphi^c\land\psi^c$ and $(\varphi\lor\psi)^c=\varphi^c\lor\psi^c$.

Let's look at the case that $\varphi$ is of the form $\forall u\in w\quad\psi(u,w,v_0,\ldots,v_{n-1})$. Define the formula \[\varphi^c(Y,y,X_0,x_0,\ldots,X_{n-1},x_{n-1})\]
to express: for all $\beta Y y$, the formula $\psi^c$ is true of $Y,\beta,Y,y,X_0,x_0,\ldots,X_{n-1},x_{n-1}$. The resulting formula has a universal first order quantification over a $\Sigma^1_1$ formula. Since $\lambda$-sequences of subsets of $\lambda$ can be coded by single subsets of $\lambda$, the second order quantification can be pulled out in front of the first order quantifier, resulting in a $\Sigma^1_1$ formula. Encoding $\lambda$-sequences of subsets of $\lambda$ by subsets of $\lambda$ can be done uniformly, that is, the formula describing this process does not depend on $\lambda$, because $L_\lambda\models\ZFCm$, and so, we can use any preferred method, for example using \Goedel pairs.

The case of existential bounded quantification is easier, so I omit it here.
\end{proof}

\begin{thm}
\label{thm:EquivalenceBtwBFAandSigma1-1-absoluteness}
Let $\P$ be a notion of forcing, and let $\lambda$ be an uncountable cardinal. The following are equivalent:
\begin{enumerate}[label=(\arabic*)]
\item
\label{item:GeneralBFA}
$\BFA_{\{\P\}}(\omega_1,\lambda)$,
\item
\label{item:GeneralSigma1-1Absoluteness}
$\Sigma^1_1(\omega_1,\lambda)$-absoluteness for $\{\P\}$.
\end{enumerate}
\end{thm}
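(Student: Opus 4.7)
The plan is to prove both implications by bridging two types of structures: first-order models of size $\lambda$ with $\omega_1$ predicates, as in Definition \ref{defn:TwoCardinalSigma1-1Absoluteness}, and transitive set-theoretic models of size $\lambda$ with $\omega_1$ predicates, as in Fact \ref{fact:CharacterizationOfBFAkappa}. The translation mechanism is essentially the one supplied by Observation \ref{obs:Translation}, encoding $\Sigma_0$-statements over $H_{\lambda^+}$ by $\Sigma^1_1$-statements over a size-$\lambda$ transitive model via $\lambda$-codes.

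For direction \ref{item:GeneralBFA}$\Rightarrow$\ref{item:GeneralSigma1-1Absoluteness}, let $M=\kla{|M|,\vec R}$ be a first-order model of size $\lambda$ with $\omega_1$ predicates $\vec R$ and let $\phi=\exists X\,\psi(X)$ be a $\Sigma^1_1$-sentence such that $\forces_\P(M\models\phi)$. I would enlarge $M$ to a transitive structure $\calM$ of size $\lambda$ containing $M$ and $\vec R$ and rich enough to interpret first-order satisfaction, obtained as the Mostowski collapse of a sufficiently large elementary submodel of $H_\theta$ of size $\lambda$ containing $M$, and equip $\calM$ with $\omega_1$ predicates coding $\vec R$. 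Since set-satisfaction for first-order formulas in set-sized structures is $\Delta_1$, the assertion ``$M\models\phi$'' is equivalent to the $\Sigma_1$-formula $\phi'(\calM)$ asserting $\exists X\,\bigl(X\subseteq|M|^n\wedge\langle M,X\rangle\models\psi\bigr)$. Applying Fact \ref{fact:CharacterizationOfBFAkappa} to $\forces_\P\phi'(\check\calM)$ yields a transitive $\bar\calM$ and $j:\bar\calM\prec\calM$ in $V$ with $\phi'(\bar\calM)$. Reading off the first-order reduct $\bM$ of $\bar\calM$, the map $j$ restricts to an elementary embedding $\bM\to M$ in the first-order language of $M$, so $M\restrict j[|\bM|]\prec M$ and is isomorphic to $\bM$; the second-order witness supplied by $\phi'(\bar\calM)$ shows $\bM\models\phi$, as required.

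For the converse, \ref{item:GeneralSigma1-1Absoluteness}$\Rightarrow$\ref{item:GeneralBFA}, let $M$ be transitive of size $\lambda$ with $\omega_1$ predicates and $\phi(x)\equiv\exists y\,\psi(y,x)$ be a $\Sigma_1$-formula with $\forces_\P\phi(\check M)$. I would fix a transitive $\calN$ of size $\lambda$ with $\calN\models\ZFCm$ and $M\in\calN$, obtained by Löwenheim-Skolem and Mostowski collapse from a sufficient $H_\theta$. Viewing $\calN$ as a first-order model $N=\kla{|\calN|,\in,\vec S}$ whose $\omega_1$ predicates $\vec S$ encode the membership relation of $\calN$, a name for $M$, and the relations $R_i$, Observation \ref{obs:Translation}, whose proof adapts verbatim with $L_\lambda$ replaced by $\calN$, converts $\psi$ applied to $\lambda$-codes into a $\Sigma^1_1$-formula $\psi^c$ over the language of $N$. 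Prefixing a second-order existential for the $\lambda$-code of $y$ yields a $\Sigma^1_1$-sentence $\phi^c$ with $\phi(M)\iff N\models\phi^c$ uniformly in $V$ and in $V^\P$. Hence $\forces_\P(N\models\phi^c)$, and \ref{item:GeneralSigma1-1Absoluteness} yields $\bN\prec N$ in $V$ with $\bN\models\phi^c$. Mostowski-collapsing $\bN$ produces a transitive $\bar\calN$ and an isomorphism $\sigma:\bar\calN\cong\bN\prec N$; decoding the fragment of $\bar\calN$ coding $M$ yields a transitive $\bM$ and an elementary embedding $j:\bM\prec M$ obtained by restricting $\sigma$, while the decoded second-order witness supplies $\bar y\in V$ with $\psi(\bar y,\bM)$, proving $\phi(\bM)$.

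The main obstacle is reconciling the unrestricted $\Sigma_1$-witnesses allowed in Fact \ref{fact:CharacterizationOfBFAkappa} with the $H_{\lambda^+}$-bounded second-order witnesses of $\Sigma^1_1$-absoluteness. The resolution is that Fact \ref{fact:CharacterizationOfBFAkappa} only demands $\phi(\bM)$ for a possibly small $\bM$, and a Löwenheim-Skolem argument in $V$ shows any witness $\bar y$ for $\psi(\bar y,\bM)$ can be taken in $H_{|\bM|^+}\subseteq H_{\lambda^+}$, precisely the range captured by a $\lambda$-code. A secondary technical point is that Observation \ref{obs:Translation} literally requires $L_\lambda\models\ZFCm$, which may fail for arbitrary uncountable $\lambda$; this is circumvented by replacing $L_\lambda$ with the transitive collapse $\calN$ of a suitable elementary submodel of $H_\theta$, which satisfies $\ZFCm$ by construction and to which the coding argument of Observation \ref{obs:Translation} transfers with no essential change.
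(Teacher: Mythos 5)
Your direction \ref{item:GeneralBFA}$\implies$\ref{item:GeneralSigma1-1Absoluteness} is correct and is essentially the paper's own argument: absorb $M$ into a transitive structure of size $\lambda$, observe that second-order satisfaction $M\models\phi$ becomes a $\Sigma_1$ assertion about that structure, and apply Fact \ref{fact:CharacterizationOfBFAkappa}.

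The converse direction, however, has a genuine gap, located at the sentence ``prefixing a second-order existential for the $\lambda$-code of $y$ yields a $\Sigma^1_1$-sentence $\phi^c$.'' By Definition \ref{defn:lambda-code}, a $\lambda$-code must be extensional \emph{and well-founded}, and well-foundedness of a relation on a set of size $\lambda$ is a $\Pi^1_1$ property (every nonempty subset has a minimal element), not a $\Sigma^1_1$ one; so the sentence you describe is of the form $\Sigma^1_1\wedge\Pi^1_1$ and is not a legitimate input to $\Sigma^1_1(\omega_1,\lambda)$-absoluteness. You cannot simply drop well-foundedness from the code: your own decoding step needs it, since from $\bN\prec N$ with $\bN\models\phi^c$ in $\V$ you must Mostowski-collapse the witnessing relation to produce an actual set $\bar{y}$ with $\psi(\bar{y},\bM)$, and an ill-founded ``code'' decodes to nothing. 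Nor is there a cheap $\Sigma^1_1$ certificate: a single order-preserving map into $\kla{\lambda,<}$ need not exist (codes for elements of $H_{\lambda^+}$ collapse to sets of ordinal height up to $\lambda^+$); ranking all proper initial segments fails to imply well-foundedness when $\cf(\lambda)=\omega$; and even when $\cf(\lambda)>\omega$, such a certificate interpreted inside the small submodel $\bN$ only ranks initial segments indexed by ordinals lying in $|\bN|$, which does not yield genuine well-foundedness in $\V$ unless every countable descending sequence is bounded by such an ordinal --- something you cannot arrange for $\lambda>\omega_1$ with only $\omega_1$ many predicates available.

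This is exactly the difficulty the paper's proof is built around, and the machinery you omit is not optional. The paper first passes from $\P$ to $\P*\dCol{\omega_1,\lambda}$, which is free by Corollary \ref{cor:ComposingWithCCforcingIsFree} (resting on Observation \ref{obs:TwoCardinalSigma1-1absForCCforcing}), so that in the further extension the $\Sigma_1$-witness acquires an $\omega_1$-code. Well-foundedness of a relation on $\omega_1$ \emph{is} $\Sigma^1_1$-expressible --- via a system of ranking functions for the countable initial segments, using that $\omega_1$ has uncountable cofinality --- and by adding the singleton predicates $\{\xi\}$, $\xi<\omega_1$, to the structure $N$, the paper guarantees $\omega_1\sub|\bN|$, so that the certificate inside $\bN$ certifies genuine well-foundedness in $\V$ and the decoding goes through. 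The obstacle you flag as the main one (that $\Sigma_1$-witnesses can be found in $H_{\lambda^+}$) is real but minor and easily handled; and your secondary remark that $L_\lambda\models\ZFCm$ may fail for singular $\lambda$, repaired by collapsing an elementary submodel of $H_\theta$, is a fair observation but does not touch the central difficulty.
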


\begin{proof}
For the direction \ref{item:GeneralBFA}${\implies}$\ref{item:GeneralSigma1-1Absoluteness}, let
$M=\kla{|M|,R_0,R_1,\ldots,R_\xi,\ldots}_{\xi<\omega_1}$ be a model of a  first order language $\mathcal{L}$ with $\omega_1$ many relation symbols, such that the cardinality of $|M|$ is $\lambda$, and let $\phi$ be a $\Sigma^1_1$-sentence over $\mathcal{L}$ such that $\P$ forces that $M\models\phi$. We may assume that $|M|$ is an ordinal less than or equal to $\lambda$. Let $X\prec H_{\lambda^+}$ with $M\in X$, $X$ transitive, $X\in H_{\lambda^+}$. Consider the structure $N=\kla{X,\in,M,|M|,R_0,\ldots,R_\xi,\ldots}_{\xi<\omega_1}$. Then the statement that $M\models\phi$, being $\Sigma^1_1$, can be expressed as a $\Sigma_1$ statement $\phi'(N)$ which is forced to be true by $\P$. By $\BFA_{\{\P\}}(\omega_1,\lambda)$, there is in $\V$ a $j:\bN\prec N$, where $\bN$ is transitive and $\phi'(\bN)$ holds. Clearly, $\bN$ is of the form $\kla{\bX,\in,\bM,|\bM|,\bR_0,\ldots,\bR_\xi,\ldots}_{\xi<\omega_1}$. It is now obvious that $j``|\bM|$ is as required.

For the direction \ref{item:GeneralSigma1-1Absoluteness}${\implies}$\ref{item:GeneralBFA}, first observe that $\Sigma^1_1(\omega_1,\lambda)$-absoluteness for $\{\P\}$ implies that $\P$ does not collapse $\omega_1$ (that is, it is not the case that $\P$ forces that $\omega_1^\V$ is countable). This is because the existence of a surjection $f:\omega\To\omega_1^\V$ could easily be expressed as a $\Sigma^1_1(L_\lambda)$ statement. If we equip $L_\lambda$ with constant symbols for all countable ordinals, and call the resulting model $M$, then any $N\prec M$ that satisfies that $\Sigma^1_1$ statement in $\V$ would give rise to such a surjection in $\V$.

Let's now begin the proof. Let
$M=\kla{|M|,\in,R_1,\ldots,R_i,\ldots}_{1\le i<\omega_1}$ be a transitive model for the language of set theory with $\omega_1$ many predicate symbols $\seq{\dot{R}_i}{1\le i<\omega_1}$, of size $\lambda$, and let $\phi(x)$ be a $\Sigma_1$-formula such that $\forces_\P\phi(\check{M})$. We may assume that $M$ is closed under ordered pairs (otherwise, we may enlarge $M$ slightly to a model that is, and equip that model with a predicate for the universe of $M$), and so we may assume that the predicates (other than $\in$) are unary. We have to show that in $\V$, there are a transitive model $\bM=\kla{|\bM|,\in,\bar{R}_0,\bar{R}_1,\ldots,\bar{R}_i,\ldots}_{1\le i<\omega_1}$ of the same language and an elementary embedding $j:\bM\prec M$ such that $\phi(\bM)$ holds.

Since in $\V$, $M\in H_{\lambda^+}$, there is a $\lambda$-code $\kla{R,\alpha}$ for $M$.
Let $G$ be $\P$-generic over $\V$ such that $\omega_1^{\V[G]}=\omega_1^\V$. In $\V[G]$, since $M\in H_{\lambda^+}$ and the $\Sigma_1$-formula $\phi(M)$ holds, we actually have that $\kla{H_{\lambda^+},\in}^{\V[G]}\models\phi(M)$. Let $\phi$ be of the form $\exists w\quad \bar{\phi}(w,M)$, where $\bar{\phi}$ is a $\Sigma_0$ formula.

Let $H$ be $\Col{\omega_1,\lambda}^{\V[G]}$-generic over $\V[G]$. Since $\Sigma_1$-formulas are upward-absolute, it follows that
$\kla{H_{\lambda^+},\in}^{\V[G][H]}\models\exists w\quad\bar{\phi}(w,M)$ as well. And since $\Col{\omega_1,\lambda}^{\V[G]}$ is countably closed in $\V[G]$, $\omega_1^{\V}=\omega_1^{\V[G]}=\omega_1^{\V[G][H]}$. Note that $(\lambda^+)^{\V[G][H]}=\omega_2^{\V[G][H]}$.

Working in $\V[G][H]$, this means that there is an $\omega_1$-code for a witness $w$ to the fact that $\kla{H_{\omega_2},\in}^{\V[G][H]}\models\exists w\quad \bar{\phi}(w,M)$. Thus, there is an $\omega_1$-code $\kla{S,\delta}$ such that $\kla{L_\lambda,\in}\models\bar{\phi}^c(S,\delta,R,\alpha)$, $\bar{\phi}^c$ being the formula given by Observation \ref{obs:Translation}. I am being a little sloppy here, because officially, the translation of $\phi^c$ of $\phi$ from $H_{\lambda^+}$ to $L_\lambda$ requires us to work with $\lambda$-codes, but an $\omega_1$-code for a witness can be easily expanded in some trivial way to a $\lambda$-code, so let's not worry about this detail. The point is that the existence of such an $\omega_1$-code can be expressed in $L_\lambda$ (using $\omega_1$ as a parameter) in a $\Sigma^1_1$ way as follows: there are an $S\sub\omega_1\times\omega_1$ (this is a second order existential quantification) and a $\delta<\omega_1$ such that $\kla{\omega_1,S}$ is extensional (this is first order expressible), ``$\kla{\omega_1,S}$ is well-founded'' and $\bar{\phi}^c(S,\delta,R,\alpha)$ holds. Here, ``$\kla{\omega_1,S}$ is well-founded'' stands for the statement that there is a function $F:\omega_1\times\omega_1\To\omega_1$ (this is second order) such that for every $\gamma<\omega_1$, if we define $f_\gamma:\gamma\To\omega_1$ by $f_\gamma(\xi)=F(\gamma,\xi)$, then $f_\gamma:\kla{\gamma,S\cap(\gamma\times\gamma)}\To\kla{\omega_1,<}$ is order preserving. This suffices, since $\omega_1$ is a cardinal of uncountable cofinality, and any ill-foundedness would be witnessed by a countably infinite decreasing sequence, and hence already be visible in some $\kla{\gamma,S\cap(\gamma\times\gamma)}$. In a slight abuse of notation, let me write $\phi^c(R,\alpha,\omega_1)$ for this formula.

Recall that $\sigma_R:\kla{U_R,\in\rest U_R}\To\kla{\lambda,R}$ is the Mostowski isomorphism and $\sigma_R^{-1}(\alpha)=c_{R,\alpha}=M$.
Note every $\xi<\lambda$ has a very simple $\lambda$-code, namely the code $\kla{<\rest\lambda,\xi}$. I will just write $\kla{<,\xi}$ for this code. Let's also fix a $\lambda$-code $\kla{C,0}$ for $\omega_1$ (in the case $\lambda=\omega_1$, $\kla{<,\omega_1}$ is not a $\lambda$-code, but of course there is a $\lambda$-code for $\omega_1$ in $\V$).

I want to view $M$ as a function, $M:\omega_1\To\V$, such that $M(0)=|M|$ and for $1\le\xi<\omega_1$, $M(\xi)=R_\xi$. For $\xi<\omega_1$, let $\zeta_\xi=\sigma_R(M(\xi))$.

Let $\chi(x,y,z)$ be a $\Sigma_0$-formula expressing that $x$ is a function and $x(y)=z$. Thus, for all $\xi<\omega_1$, $\chi(M,\xi,M(\xi))$ holds. This means that $c_{R,\alpha}(c_{<,\xi})=c_{R,\zeta_\xi}$, which implies that $\kla{L_\lambda,\in}\models\chi^c(R,\alpha,<,\xi,R,\zeta_\xi)$ (using the translation procedure of Observation \ref{obs:Translation}.) This is true in $\V$. So we can choose witnesses $W_\xi$ for these $\Sigma^1_1$ facts: if $\chi^c=\exists Z\quad\tilde{\chi}^c$, then we have for all $\xi<\omega_1$: $\kla{L_\lambda,\in}\models\tilde{\chi}^c(W_\xi,R,\alpha,<,\xi,R,\zeta_\xi)$.
I also would like to add a witness $W$ to the translation $(\chi')^c(R,\alpha,C,0)$ of the formula $\chi'(M)$ which expresses that $M$ is a function with domain $\omega_1$, $M(0)$ is transitive and for all $\xi<\omega_1$, $M(\xi)\sub M(0)$.

Consider the structure \[N=\kla{L_\lambda,\in,\omega_1,R,\alpha,W,\zeta_0,W_0,\{0\},\zeta_1,W_1,\{1\},\ldots,\zeta_\xi,W_\xi,\{\xi\},\ldots}_{\xi<\omega_1}.\]
We have that in $\V[G][H]$, $N\models\phi^c$. Note that I view $\phi^c$ as a sentence here, since the parameters $R,\alpha,\omega_1$ are available as predicates in the structure $N$.

By Corollary \ref{cor:ComposingWithCCforcingIsFree}, $\Sigma^1_1(\omega_1,\lambda)$-absoluteness holds for $\{\P*\dot{\mathsf{Col}}(\omega_1,\lambda)\}$. So let $\bN\in\V$ be such that $\bN\prec N$ and $\bN\models\phi^c$. Let $|\bN|$ be the universe of $\bN$. Since we added the predicates $\{\xi\}$, for $\xi<\omega_1$, it follows that $\omega_1\sub|\bN|$. So let
\[j:|\tN|\To|\bN|\]
be the inverse of the Mostowski collapse. Then $|\tN|=L_{\tlambda}\models\ZFCm$ and $j\rest\omega_1=\id$.

Let's expand $|\tN|$ to a structure such that
\[j:\tN\To\bN\]
is an isomorphism. So $\tN$ is of the form
\[\tN=\kla{L_\tlambda,\in,\omega_1,\tilde{R},\talpha,\tilde{W},\tilde{\zeta}_0,\tilde{W}_0,\tilde{\zeta}_1,\tilde{W}_1,\ldots,\tilde{\zeta}_\xi,\tilde{W}_\xi,\ldots}_{\xi<\omega_1}.\]
By elementarity, we have that
\[\kla{L_{\tlambda},\in}\models(\chi')^c(\tilde{R},\talpha,C,\omega_1)\]
and for every $\xi<\omega_1$
\[\kla{L_\tlambda,\in}\models\chi^c(\tilde{R},\talpha,<,\xi,\tilde{R},\tzeta_\xi)\]
because we explicitly added the witnessing subsets of $L_\lambda$ to the structure $N$.

Let $\bM=c_{\tilde{R},\talpha}$. Then since $\tN\models\phi^c$, it follows that $\phi(\bM)$ holds. This is because $\phi^c$ expressed that there are $S\sub\omega_1\times\omega_1$ and a $\delta<\omega_1$ such that $\kla{\omega_1,S}$ is extensional, ``$\kla{\omega_1,S}$ is well-founded'' and $\bar{\phi}(S,\delta,R,\alpha)$ holds, by saying that there is a function $F:\omega_1\times\omega_1\To\omega_1$ such that for every $\gamma<\omega_1$, defining $f_\gamma:\gamma\To\omega_1$ by $f_\gamma(\xi)=F(\gamma,\xi)$, we have that $f_\gamma:\kla{\gamma,S\cap\gamma\times\gamma}\To\kla{\omega_1,<}$ is order preserving. Since the correct $\omega_1$ is used in this formula, any witnessing $\kla{S,\delta}$ must be an $\omega_1$-code.

Moreover, since $\tN\models(\chi')^c(\tilde{R},\talpha,C,\omega_1)$, we know that $\bM$ is a function with domain $\omega_1$, $\bM(0)$ is transitive, and for all $\xi<\omega_1$, $\bM(\xi)\sub\bM(0)$. View $\bM$ as a model,
$\bM=\kla{|\bM|,\in,\vec{\bar{R}}}$.

Finally,
\[\sigma_R^{-1}\compose j\compose\sigma_{\tilde{R}}\rest|\bM|:\bM\To M\]
is elementary: to see this, let $\phi_0(\vx)$ be a formula in the language of $\bM$, and let $\va=a_0,\ldots,a_{n-1}\in|\bM|$. Let $\xi_i=\sigma_{\tilde{R}}(a_i)$. Since $\sigma_{\tilde{R}}(\bar{R}_\xi)=\tzeta_\xi$ and $\sigma_{\tilde{R}}(|\bM|)=\talpha$, we can interpret $\phi_0$ in $\tN$ by bounding every quantifier by $\{\gamma<\lambda\st\gamma\tilde{R}\talpha\}$ and interpreting the predicate $\dot{R}_\xi$ by the class $\{\gamma<\lambda\st\gamma\tilde{R}\tzeta_\xi\}$. Calling the resulting formula $\phi_1(\vx)$, we have that $\tN\models\phi_1(\vxi)$, i.e., $\tN\models\phi_1(\sigma_{\tilde{R}}(\va))$. By elementarity of $j$, it follows that $\bN$, and hence $N$, models that $\phi_1(j(\sigma_{\tilde{R}}(\va)))$ holds. But unraveling how $\phi_1$ was constructed from $\phi_0$, this means that $M\models\phi_0(\sigma_R^{-1}(j(\sigma_{\tilde{R}}(\va))))$.
\end{proof}

Towards establishing a connection between these equivalent principles and Aronszajn tree preservation, I will use the following terminology.

\begin{defn}
\label{defn:lambda-omega-preserving}
Let $\P$ be a notion of forcing, and let $\lambda$ be a cardinal. $\P$ is \emph{$[\lambda]^\omega$-preserving} if whenever $G$ is $\P$-generic over $\V$, then we have:
\[[\lambda]^\omega=([\lambda]^\omega)^{\V[G]}.\]
\end{defn}

The following theorem summarizes the connections between Aronszajn tree preservation, bounded forcing axioms and two-cardinal-$\Sigma^1_1$ absoluteness.

\begin{thm}
\label{thm:CharacterizationOfATP(omega1,lambda)}
Let $\lambda$ be a cardinal such that $\lambda^\omega=\lambda$, and let $\P$ be a $[\lambda]^\omega$-preserving forcing notion. Then the following are equivalent:
\begin{enumerate}[label=(\arabic*)]
\item
\label{item:BFA}
$\BFA_{\{\P\}}(\omega_1,\lambda)$,
\item
\label{item:Sigma1-1Absoluteness}
$\Sigma^1_1(\omega_1,\lambda)$-absoluteness for $\{\P\}$,
\item
\label{item:wATP}
$\wATP{\omega_1,\lambda}{\{\P\}}$.
\end{enumerate}
\end{thm}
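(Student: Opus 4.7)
The plan is to invoke Theorem \ref{thm:EquivalenceBtwBFAandSigma1-1-absoluteness}, which already establishes \ref{item:BFA}$\iff$\ref{item:Sigma1-1Absoluteness} without any additional hypotheses, and then close the triangle by proving \ref{item:Sigma1-1Absoluteness}$\implies$\ref{item:wATP} and \ref{item:wATP}$\implies$\ref{item:BFA} separately. The side assumptions $\lambda^\omega=\lambda$ and $[\lambda]^\omega$-preservation will enter only in the second of these implications.

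For \ref{item:Sigma1-1Absoluteness}$\implies$\ref{item:wATP}, I argue the contrapositive. Suppose $T$ is a $(\omega_1,\le\lambda)$-Aronszajn tree in $\V$ that is forced by $\P$ to acquire a cofinal branch. Encode $T$ as a first-order structure $M$ with universe of size $\lambda$, in a language with $\omega_1$ many symbols: the tree order, a binary predicate $R$ with $R(\alpha,x)$ meaning ``$x$ lies on level $\alpha$,'' a unary predicate interpreted as $\omega_1$, and a distinguished constant $c_\alpha$ interpreted as $\alpha$ for every $\alpha<\omega_1$. The sentence ``$T$ has a cofinal branch'' translates into a $\Sigma^1_1$-formula $\phi$: an existential second-order quantifier over a set $B$, followed by the first-order assertion that $B$ is a chain in the tree order and that for every $\alpha$ in the designated copy of $\omega_1$ there is some $x\in B$ with $R(\alpha,x)$. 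Since $\P\forces M\models\phi$, \ref{item:Sigma1-1Absoluteness} yields $\bM\prec M$ in $\V$ with $\bM\models\phi$, and the constants $c_\alpha$ force $\omega_1\sub|\bM|$, so the witnessing branch is genuinely cofinal in $T$, contradicting its Aronszajn property.

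For \ref{item:wATP}$\implies$\ref{item:BFA}, again by contrapositive, suppose $\BFA_{\{\P\}}(\omega_1,\lambda)$ fails, witnessed by a family $\vec{\mathcal{A}}=\<\mathcal{A}_\alpha\st\alpha<\omega_1>$ of maximal antichains in the complete Boolean algebra $\B$ of $\P$, with $|\mathcal{A}_\alpha|\le\lambda$ and no $\V$-filter meeting all $\mathcal{A}_\alpha$. By reindexing I may assume $\mathcal{A}_\alpha\sub\lambda$. Build a tree $T\in\V$ whose nodes at level $\alpha$ are the sequences $s=\<s(\beta)\st\beta\le\alpha>$ with $s(\beta)\in\mathcal{A}_\beta$ and satisfying the finite meet property in $\B$, ordered by end-extension. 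Maximality of the $\mathcal{A}_\alpha$ guarantees that every node extends, so $T$ has height $\omega_1$; and $\lambda^\omega=\lambda$ bounds the size of each level by $\lambda^{|\alpha+1|}\le\lambda^\omega=\lambda$, so $T$ is an $(\omega_1,\le\lambda)$-tree, Aronszajn in $\V$ because a cofinal branch would directly produce the missing $\vec{\mathcal{A}}$-generic filter. Now fix a $\P$-generic $G$ over $\V$ with induced ultrafilter $H$ on $\B$, and let $t(\alpha)$ be the unique element of $H\cap\mathcal{A}_\alpha$. For each $\alpha<\omega_1$, the initial segment $\<t(\beta)\st\beta\le\alpha>$ is a countable sequence of ordinals below $\lambda$, so $[\lambda]^\omega$-preservation places it in $\V$; its members all lie in the filter $H$, so any finite subcollection has nonzero meet in $\B$, whence the sequence is a node of $T$. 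Assembling these nodes produces a cofinal branch of $T$ in $\V[G]$, so no $p\in\P$ can force $T$ to remain Aronszajn, contradicting \wATP{\omega_1,\lambda}{\{\P\}}.

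The main obstacle is the direction \ref{item:wATP}$\implies$\ref{item:BFA}: the tree $T$ must have width at most $\lambda$, which is exactly what $\lambda^\omega=\lambda$ buys; and the generic-induced threading of the antichains must inhabit the ground-model tree rather than drift out of $\V$, which is precisely the role of $[\lambda]^\omega$-preservation. Without the latter, a $\P$-generic could produce a $\V[G]$-thread whose countable initial segments fail to lie in $\V$, so no cofinal branch through the ground-model tree $T$ would be obtained in $\V[G]$, and the intended contradiction with $\wATP$ would collapse.
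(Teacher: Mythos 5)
Your architecture is sound and genuinely different from the paper's. The paper obtains (1)$\iff$(2) from Theorem \ref{thm:EquivalenceBtwBFAandSigma1-1-absoluteness}, as you do, but then proves (1)$\implies$(3) via the model-theoretic characterization of the bounded forcing axiom (Fact \ref{fact:CharacterizationOfBFAkappa}) and closes the cycle with (3)$\implies$(2), using a tree whose level-$\alpha$ nodes are countable chains of pairs $\kla{x,b}$, where $x\in[\lambda]^\omega$ carries a countable elementary substructure of the given $\Sigma^1_1$-structure and $b$ a fragment of the second-order witness. You instead prove (2)$\implies$(3) by coding branch-existence as a $\Sigma^1_1$ statement over a structure with constants for all countable ordinals, and (3)$\implies$(1) by a tree of partial threads through the witnessing antichain family. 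That last leg works directly with the combinatorial definition of $\BFA_{\{\P\}}(\omega_1,\lambda)$ and bypasses both Fact \ref{fact:CharacterizationOfBFAkappa} and the coding apparatus of Observation \ref{obs:Translation}; the side hypotheses enter exactly as in the paper ($\lambda^\omega=\lambda$ bounds the widths, $[\lambda]^\omega$-preservation keeps countable initial segments of the generic object in $\V$). One standard point you elide, as does the paper: a countable sequence of ordinals below $\lambda$ is not literally a member of $[\lambda]^\omega$, but G\"{o}del pairing on $\lambda$ (note $\omega_1\le\lambda$) codes it as one.

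There is, however, one false step: ``Maximality of the $\mathcal{A}_\alpha$ guarantees that every node extends, so $T$ has height $\omega_1$.'' Maximality of $\mathcal{A}_{\alpha+1}$ produces an element compatible with any \emph{single} nonzero element of $\B$, but a node of your tree generates a (generally non-principal) filter base, and no element of $\mathcal{A}_{\alpha+1}$ need be compatible with all of its members. For instance, in the Cohen algebra $\B=\mathrm{RO}({}^{\omega}2)$, writing $[s]$ for the basic clopen set determined by $s\in{}^{{<}\omega}2$, the conditions $[0^n]$, $n<\omega$, have the finite meet property, while $\{[0^n1]\st n<\omega\}$ is a maximal antichain each of whose elements satisfies $[0^n1]\wedge[0^{n+1}]=0$; so if such configurations occur among the $\mathcal{A}_\beta$, the corresponding node is a dead end. (Moreover, even one-step extendibility would not by itself give height $\omega_1$, since branches can die at limit levels.) Fortunately the error is inessential, because your own generic-thread argument already proves the height claim: for each $\alpha<\omega_1$, the initial segment $\kla{t(\beta)\st\beta\le\alpha}$ of the thread through $H$ lies in $\V$ by $[\lambda]^\omega$-preservation, and it satisfies the defining conditions of $T$, which are absolute between $\V$ and $\V[G]$; hence it is a node of $T$ at level $\alpha$ in $\V$. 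This is exactly how the paper establishes that its own approximation tree has height $\omega_1$. So delete the maximality claim, keep your verification of Aronszajn-ness and of the width bound, and derive the height from the generic thread; with that rearrangement your proof is correct.
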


\begin{proof}
By Theorem \ref{thm:EquivalenceBtwBFAandSigma1-1-absoluteness},  \ref{item:BFA} and \ref{item:Sigma1-1Absoluteness} are equivalent. Thus, it suffices to prove that \ref{item:BFA}$\implies$\ref{item:wATP} and that \ref{item:wATP}$\implies$\ref{item:Sigma1-1Absoluteness}.

\ref{item:BFA}$\implies$\ref{item:wATP}: Assuming \ref{item:BFA}, suppose that \ref{item:wATP} fails. This means that there is an $(\omega_1,{\le}\lambda)$-Aronszajn tree $T$ such that $\P$ forces that $T$ is not an $(\omega_1,{\le}\lambda)$-Aronszajn tree. We may assume that $T\sub\omega_1\times\lambda$. Let $M\prec H_{\lambda^+}$ be transitive, with $T\in M$, of size $\lambda$, and let $G$ be $\P$-generic. In $\V[G]$, $T$ has a branch of order type $\omega_1$. Let $M=\kla{M,\in,T,0,\ldots,\xi,\ldots}_{\xi<\omega_1}$. Then the existence of such a branch can be expressed as $\phi(M)$, where $\phi$ is a $\Sigma_1$-formula in the language of set theory. Since $\P$ forces $\phi(M)$, by $\BFA_{\{\P\}}(\omega_1,\lambda)$, there are in $\V$ a transitive $\bar{M}$ and an elementary $j:\bar{M}\prec M$ such that $\phi(\bar{M})$ holds. Letting $\bar{b}$ be a witness for this, it follows that $j``\bar{b}$ is a branch through $T$ of order type $\omega_1$ -- the point here is that $\omega_1\sub\bM$ and $j\rest\omega_1=\id$. Thus, $T$ is not Aronszajn in $\V$, a contradiction.

\ref{item:wATP}$\implies$\ref{item:Sigma1-1Absoluteness}:
Assume \wATP{\omega_1,\lambda}{\{\P\}}.
In order to verify that $\Sigma^1_1(\omega_1,\lambda)$-absoluteness for $\{\P\}$ holds,
let $M=\langle\lambda,R_0,R_1,\ldots,R_\xi,\ldots\rangle_{\xi<\omega_1}$ be an $\mathcal{L}$-structure, where $\mathcal{L}=\{\dot{R}_\xi\st\xi<\omega_1\}$, and each $\dot{R}_\xi$ is a relation symbol of finite arity, and $\dot{R}_\xi^M=R_\xi$.
Let $\phi$ be as in Definition \ref{defn:TwoCardinalSigma1-1Absoluteness}, and suppose that $\P$ forces that $M\models\phi$.

We have to find in $\V$ an $\bM\prec M$ such that $\bM\models\phi$. By renumbering, if necessary, we may assume that the only predicates occurring in $\phi$ are $R_0,\ldots, R_{n-1}$.
Let $\psi$ be a first order sentence in the language $\mathcal{L}$ with one additional predicate symbol $\dot{B}$ such that
\begin{enumerate}
\item[$(*)$]
$M\models\phi\iff \exists B\sub\lambda\quad \kla{\lambda,R_0,\ldots,R_{n-1},B}\models\psi$,
\end{enumerate}
where $B$ is the interpretation of $\dot{B}$ in this structure.

I will use the following notation: if $M$ is a model for a first order language $\mathcal{L}$ and $\bar{\mathcal{L}}\sub\mathcal{L}$, then $M\rest\bar{\mathcal{L}}$ is the reduct of $M$ to $\bar{\mathcal{L}}$. If $X\sub M$ and $M$ is a relational structure, then $M|X$ is the structure with universe $X$ in which the relations of $M$ are restricted to $X$.

For
$\alpha<\omega_1$, define languages
\[\mathcal{L}_\alpha=\{\dot{R}_i\st i<n\}\cup\{\dot{R}_\xi\st\xi<\alpha\}\ \text{and}\ \mathcal{L}_\alpha^+=\mathcal{L}_\alpha\cup\{\dot{B}\}.\]
In \V, consider the tree consisting of all functions $f$ with $\dom(f)\in\omega_1$ such that for all $\alpha\in\dom(f)$:
\begin{enumerate}[label=(\alph*)]
  \item
  \label{item:pairs}
  $f(\alpha)$ is of the form $\kla{x^f_\alpha,b^f_\alpha}$, where $b^f_\alpha\sub x^f_\alpha\in[\lambda]^\omega$,
  \item
  \label{item:elementarychain}
  letting
  \[M^f_\alpha=(M\rest\mathcal{L}_\alpha)|x^f_\alpha\ \text{and}\
    N^f_\alpha=(M^f_\alpha,b^f_\alpha),\]
      so that $M^f_\alpha$ is an $\mathcal{L}_\alpha$-structure and $N^f_\alpha$ is an $\mathcal{L}_\alpha^+$-structure ($\dot{B}$ is interpreted as $b^f_\alpha$ in the latter structure), we have: if $\beta\in\dom(f)$ and $\alpha\le\beta$, then
      \[N^f_\alpha\prec N^f_\beta\rest\mathcal{L}_\alpha^+\ \text{and}\ M^f_\alpha\prec M\rest\mathcal{L}_\alpha,\]
  \item
  \label{item:allthinkpsi}
  $N^f_\alpha\models\psi$.
\end{enumerate}
The tree ordering is inclusion. The size of $T$ is $(\lambda^\omega)^{{<}\omega_1}=\lambda$, since by assumption $\lambda^\omega=\lambda$.

Let $G\sub\P$ be generic. Then in $\V[G]$, $T$ has a cofinal branch, that is, there is in $\V[G]$ a function $f$ with domain $\omega_1$ such that for every $\alpha<\omega_1$, $f\rest\alpha\in T$. To see this, working in $\V[G]$, let $B$ be a witness to the fact that $(*)$ holds. Still in $\V[G]$, define an increasing sequence $\seq{x_\alpha}{\alpha<\omega_1}$ in $[\lambda]^\omega$ such that $(M,B)\rest\mathcal{L}_\alpha^+\prec (M,B)\rest\mathcal{L}_\alpha^+$.
Then $f(\alpha):=\kla{x_\alpha,B\cap x_\alpha}$ (for $\alpha<\omega_1$) is as wished.
By assumption, $\P$ is $[\lambda]^\omega$-preserving. 
Thus, it follows that for every $\alpha<\omega$, $f\rest\alpha\in\V$, since $f\rest\alpha$ is essentially a countable sequence of elements of $[\lambda]^\omega$. Hence, $f$ is a cofinal branch through $T$, $T$ has height $\omega_1$ in $\V$, and the size of $T$ (and hence also the width of $T$) is at most $\lambda$ in $\V$.

So $T$ is an $(\omega_1,{\le}\lambda)$-tree that's not Aronszajn in $\V[G]$. Since $G$ is an arbitrary $\P$-generic filter, this means that $\P$ forces that $T$ is not Aronszajn. By $\wATP{\omega_1,\lambda}{\{\P\}}$, it follows that $T$ is not Aronszajn in $\V$.
So let $g:\omega_1\To\V$ be a cofinal branch through $T$, $g\in\V$. For $\alpha<\omega_1$, let $g(\alpha)=\kla{x_\alpha,b_\alpha}$. Let $X=\bigcup_{\alpha<\omega_1}x_\alpha$ and $\bar{B}=\bigcup_{\alpha<\omega_1}b_\alpha$. Then $\bar{M}=M|X$ is as wished: Letting $\bar{N}=\kla{\bar{M},\bar{B}}$, we have that $\bar{N}\models\psi$ (by \ref{item:elementarychain} and \ref{item:allthinkpsi}), so that by $(*)$, $\bar{M}\models\phi$. Also by \ref{item:elementarychain}, $\bar{M}\prec M$, so we are done.
\end{proof}

Note that the implication \ref{item:BFA}$\implies$\ref{item:wATP} in the previous theorem goes through without assuming that $\P$ is $[\lambda]^\omega$-preserving. Let's make a note of this.

\begin{cor}
\label{cor:ATPcharacterizationTechnical}
Let $\lambda\ge\omega_1$ be a cardinal, and let $\P$ be a notion of forcing.
\begin{enumerate}[label=(\arabic*)]
\item
\label{item:BFAtoWATP}
$\BFA_{\{\P\}}(\omega_1,\lambda)$ implies $\wATP{\omega_1,\lambda}{\{\P\}}$,
\item
\label{item:BFAtoSigma1-1Absoluteness}
$\BFA_{\{\P\}}(\omega_1,\lambda)$ is equivalent to
$\Sigma^1_1(\omega_1,\lambda)$-absoluteness for $\{\P\}$.
\end{enumerate}
\end{cor}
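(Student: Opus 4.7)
The plan is to observe that both parts of this corollary have essentially already been proved, so the proof reduces to pointing to the relevant portions of the earlier results and checking that the extra hypotheses dropped here are not actually used.

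For part \ref{item:BFAtoSigma1-1Absoluteness}, I would simply invoke Theorem \ref{thm:EquivalenceBtwBFAandSigma1-1-absoluteness}, whose only assumption on $\lambda$ is that it be an uncountable cardinal, which is automatic since $\lambda \ge \omega_1$. No preservation hypothesis on $\P$ enters there, so the equivalence goes through unchanged.

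For part \ref{item:BFAtoWATP}, the plan is to rerun the argument given in Theorem \ref{thm:CharacterizationOfATP(omega1,lambda)} for the implication \ref{item:BFA}$\implies$\ref{item:wATP} and verify that it nowhere invokes $\lambda^\omega = \lambda$ or $[\lambda]^\omega$-preservation. Concretely, assume $\BFA_{\{\P\}}(\omega_1,\lambda)$ and suppose for contradiction that some $(\omega_1,{\le}\lambda)$-Aronszajn tree $T \subseteq \omega_1 \times \lambda$ is forced by $\P$ to acquire a cofinal branch. Choose a transitive $M \prec H_{\lambda^+}$ of size $\lambda$ with $T \in M$ and expand it to the structure $\kla{M,\in,T,0,\ldots,\xi,\ldots}_{\xi<\omega_1}$, with a constant naming each countable ordinal. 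The existence of a cofinal branch through $T$ is a $\Sigma_1$ statement $\phi$ about this expanded $M$, and by assumption $\forces_\P \phi(\check M)$. Fact \ref{fact:CharacterizationOfBFAkappa} then supplies, in $\V$, a transitive $\bar M$ and an elementary embedding $j : \bar M \prec M$ with $\phi(\bar M)$ true. Because every countable ordinal is named by a constant, $\omega_1 \subseteq \bar M$ and $j \restriction \omega_1 = \id$, so the $j$-image of a cofinal branch through $T^{\bar M}$ witnessing $\phi(\bar M)$ is a genuine cofinal branch of $T$ in $\V$, contradicting that $T$ is Aronszajn.

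Since no genuinely new work is required, there is no meaningful obstacle; the point of stating the corollary is precisely to record that the auxiliary hypotheses of Theorem \ref{thm:CharacterizationOfATP(omega1,lambda)} are consumed only by the direction \ref{item:wATP}$\implies$\ref{item:Sigma1-1Absoluteness}, and not by \ref{item:BFA}$\implies$\ref{item:wATP} or by the equivalence of \ref{item:BFA} and \ref{item:Sigma1-1Absoluteness}.
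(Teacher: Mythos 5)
Your proposal is correct and matches the paper's own justification: part \ref{item:BFAtoSigma1-1Absoluteness} is exactly Theorem \ref{thm:EquivalenceBtwBFAandSigma1-1-absoluteness} (which only needs $\lambda$ uncountable), and part \ref{item:BFAtoWATP} is obtained, just as in the paper, by observing that the \ref{item:BFA}$\implies$\ref{item:wATP} argument of Theorem \ref{thm:CharacterizationOfATP(omega1,lambda)} (via Fact \ref{fact:CharacterizationOfBFAkappa}, constants for countable ordinals, and $j\rest\omega_1=\id$) never uses $\lambda^\omega=\lambda$ or $[\lambda]^\omega$-preservation.
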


The following is an immediate consequence of Theorem \ref{thm:CharacterizationOfATP(omega1,lambda)}.

\begin{cor}
\label{cor:CharacterizationOfATP(omega1,continuum)}
Let $\P$ be a forcing notion that does not add reals. Then the following are equivalent:
\begin{enumerate}[label=(\arabic*)]
\item
\label{item:BFA-NNR}
$\BFA_{\{\P\}}(\omega_1,2^\omega)$,
\item
\label{item:Sigma1-1Absoluteness-NNR}
$\Sigma^1_1(\omega_1,2^\omega)$-absoluteness for $\{\P\}$,
\item
\label{item:wATP-NNR}
$\wATP{\omega_1,2^\omega}{\{\P\}}$.
\end{enumerate}
\end{cor}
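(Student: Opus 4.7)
The plan is to derive this as an immediate instance of Theorem \ref{thm:CharacterizationOfATP(omega1,lambda)} applied with $\lambda := 2^\omega$. That theorem gives the three-way equivalence under two hypotheses on $\lambda$ and $\P$: (a) $\lambda^\omega = \lambda$, and (b) $\P$ is $[\lambda]^\omega$-preserving. So the entire task reduces to verifying these two hypotheses for $\lambda = 2^\omega$ from the single assumption that $\P$ adds no reals.

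Hypothesis (a) is immediate from cardinal arithmetic: $(2^\omega)^\omega = 2^{\omega\cdot\omega} = 2^\omega$, with no use of the assumption on $\P$.

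For (b), I would argue as follows. Fix, in $\V$, a bijection $h\colon 2^\omega \to \reals$. Let $G$ be $\P$-generic and suppose $A \in ([2^\omega]^\omega)^{\V[G]}$. Since $\P$ adds no reals, $\reals^{\V[G]} = \reals^\V$, and so $h[A]$ is a countable subset of $\reals^\V$ lying in $\V[G]$. Any enumeration $f\colon\omega\to\reals^\V$ of $h[A]$ in $\V[G]$ is a countable sequence of reals, which can be coded by a single real; by the no-new-reals hypothesis, this code lies in $\V$, hence $f\in\V$, hence $h[A]\in\V$, and therefore $A = h^{-1}[h[A]]\in\V$. This gives $([2^\omega]^\omega)^{\V[G]} = ([2^\omega]^\omega)^\V$, so $\P$ is $[2^\omega]^\omega$-preserving.

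I do not foresee any genuine obstacle; the only subtlety worth flagging is that $2^\omega$ is to be computed in $\V$ and may fail to remain a cardinal in $\V[G]$ even when $\P$ adds no reals (since $\P$ could collapse cardinals), but the definition of $[\lambda]^\omega$-preservation concerns the fixed ordinal $\lambda$ and not its cardinal status in the extension, so this causes no difficulty. With (a) and (b) in hand, Theorem \ref{thm:CharacterizationOfATP(omega1,lambda)} delivers \ref{item:BFA-NNR}$\Leftrightarrow$\ref{item:Sigma1-1Absoluteness-NNR}$\Leftrightarrow$\ref{item:wATP-NNR} directly.
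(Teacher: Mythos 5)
Your proposal is correct and follows essentially the same route as the paper: apply Theorem \ref{thm:CharacterizationOfATP(omega1,lambda)} with $\lambda=2^\omega$, note $(2^\omega)^\omega=2^\omega$, and verify $[\lambda]^\omega$-preservation by transporting a countable subset of $\lambda$ through a bijection with the reals, coding it as a single real, and invoking the no-new-reals hypothesis. Your extra remark about $2^\omega$ possibly losing its cardinal status in $\V[G]$ is a correct observation but, as you note, harmless, since Definition \ref{defn:lambda-omega-preserving} concerns the fixed ordinal $\lambda$.
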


\begin{proof} This follows by applying Theorem
\ref{thm:CharacterizationOfATP(omega1,lambda)} to $\lambda=2^\omega$. Clearly then, $\lambda^\omega=\lambda$, and since $\P$ does not add reals, it follows that $\P$ is $[\lambda]^\omega$-preserving: using a bijection between $\power(\omega)$ and $\lambda$, an alleged new element of $[\lambda]^\omega$ can be viewed as a countable set of reals, which can be viewed as a single real, so it is not new.
\end{proof}

%

\section{Aronszajn tree preservation by subclasses of subcomplete forcing}
\label{sec:SubclassesOfSCforcing}

In this section, I will analyze the principles $\wATP{\omega_1,\kappa}{\Gamma}$ when $\Gamma$ is an appropriate subclass of subcomplete forcing. It will turn out that it will be crucial where $\kappa$ lies in comparison to the continuum. I will deal with each possible constellation in a separate subsection, but first, I will introduce the relevant classes of subcomplete forcing.

\subsection{Subcompleteness above $\mu$}

Jensen introduced the concept of subcompleteness above $\mu$ in \cite[\S2, pp.~47-49]{Jensen:SPSCF}, as follows (generalizing slightly). Following Jensen, a transitive model $\bN$ is \emph{full} if there is an ordinal $\delta>0$ such that $L_\delta(\bN)\models\ZFC^-$ and such that $\bN$ is \emph{regular} in $L_\delta(\bN)$, meaning that if $\gamma<\On\cap\bN$, $f:\gamma\To\bN$ and $f\in L_\delta(\bN)$, then $f\in\bN$.

\begin{defn}
\label{defn:SubcompleteAboveMu}
A notion of forcing $\P$ is \emph{subcomplete above an ordinal $\mu$} if for all sufficiently large $\theta$, we have that if
$\sigma:\bN\prec N$
where $N$ is of the form $L_\tau^A=\kla{L_\tau[A],\in,A}$, $H_\theta\sub N$, $N\models\ZFCm$, $\bN$ is countable, transitive and full, $\P,\mu\in\ran(\sigma)$, $a_0,\ldots,a_{n-1}\in\bN$
and $\bG$ is $\bP=\sigma^{-1}(\P)$-generic over $\bN$, then
there is a condition $p\in\P$ such that whenever $G\ni p$ is $\P$-generic, then in $\V[G]$, there is a $\sigma':\bN\prec N$ satisfying the following conditions:
\begin{enumerate}
\item $\sigma'(\bP)=\P$, $\sigma(a_i)=\sigma'(a_i)$, for all $i<n$, and $\sigma'\rest\bmu=\sigma\rest\bmu$ (where $\bmu=\sigma^{{-}1}(\mu)$),
\item $(\sigma')``\bG\sub G$,
\item $\Hull^N(\delta\cup\ran(\sigma))=\Hull^N(\delta\cup\ran(\sigma'))$, where $\delta=\delta(\P)$.
\end{enumerate}
If any mention of $\mu$ and $\bmu$ is removed in the previous definition, the result is the definition of subcompleteness.
\end{defn}

It is easy to see that every countably closed forcing is subcomplete above $\mu$ (for any $\mu$), because $\sigma'$ in the definition can be chosen to be equal to $\sigma$ in this case. Note that every subcomplete forcing is subcomplete above $\omega_1$. More generally:

\begin{obs}
\label{obs:SCaboveContinuumIsFree}
Every subcomplete forcing is subcomplete above $2^\omega$.
\end{obs}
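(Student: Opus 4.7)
The plan is to reduce subcompleteness above $2^\omega$ to plain subcompleteness by inserting one judiciously chosen parameter into the preserved list. Suppose we are handed the usual setup witnessing the hypotheses of Definition~\ref{defn:SubcompleteAboveMu} with $\mu = 2^\omega$: an embedding $\sigma:\bN\prec N$, parameters $a_0,\ldots,a_{n-1}\in\bN$, a generic $\bG$ over $\bN$ for $\bP=\sigma^{-1}(\P)$, and $\bmu=\sigma^{-1}(2^\omega)$. Because $\bN \models \bmu = 2^\omega$ by elementarity with $N$, and $\bN$ models enough of $\ZFC$ to witness this equality of cardinalities, there is some $\bar f\in\bN$ which $\bN$ thinks is a bijection $\bar f:\bmu\to(\power(\omega))^{\bN}$; fix any such $\bar f$.

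Next, I would apply the plain subcompleteness of $\P$ to the same data $\sigma,\bN,N,\bG$, but with the augmented finite parameter list $a_0,\ldots,a_{n-1},\bar f$. This produces a condition $p\in\P$ such that, for every $\P$-generic $G\ni p$, there is in $\V[G]$ a map $\sigma':\bN\prec N$ satisfying the three clauses of plain subcompleteness together with $\sigma'(\bar f)=\sigma(\bar f)$. Clauses (2) and (3) of Definition~\ref{defn:SubcompleteAboveMu}, as well as the parts of clause (1) asserting $\sigma'(\bP)=\P$ and $\sigma'(a_i)=\sigma(a_i)$, are then immediate. The only remaining obligation is the pointwise agreement $\sigma'\rest\bmu = \sigma\rest\bmu$.

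For this, the key observation is the standard fact that any elementary embedding between transitive models fixes reals pointwise: for $r\in(\power(\omega))^{\bN}$, $n\in r \iff \bN\models n\in r \iff N\models \sigma(n)\in \sigma(r) \iff n\in \sigma(r)$, using $\sigma(n)=n$ for $n<\omega$, and identically for $\sigma'$. Writing $f:=\sigma(\bar f)=\sigma'(\bar f)$, elementarity yields, for every $\bar\alpha<\bmu$,
\[f(\sigma(\bar\alpha)) = \sigma(\bar f(\bar\alpha)) = \bar f(\bar\alpha) = \sigma'(\bar f(\bar\alpha)) = f(\sigma'(\bar\alpha)).\]
Since $N\models$ \emph{``$f$ is injective''}, we conclude $\sigma(\bar\alpha)=\sigma'(\bar\alpha)$, finishing the verification.

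There is no real obstacle here; the whole proof is a one-parameter application of plain subcompleteness, leveraging the coincidence that $2^\omega$ is precisely the cardinality of a set --- namely $\power(\omega)$ --- on which elementary embeddings between transitive models are forced to act as the identity. The same strategy will \emph{not} extend past $2^\omega$, since there is no corresponding canonical ``rigid'' set of larger cardinality, which foreshadows why the subsequent sections treating $\mu$ above the continuum must deploy different, more delicate methods.
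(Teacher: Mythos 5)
Your proof is correct and is essentially the paper's own argument: both hinge on fixing a bijection between $\bmu$ and $\power(\omega)^{\bN}$ and then transporting the fact that $\sigma$ and $\sigma'$ must agree on (indeed, fix) the reals of $\bN$ through that bijection to get agreement on $\bmu$. The only cosmetic difference is that you secure $\sigma'(\bar{f})=\sigma(\bar{f})$ by adding $\bar{f}$ to the finite parameter list, whereas the paper takes $f$ to be the $<_A$-least bijection, so that $\bar{f}=\sigma^{-1}(f)$ is definable in $\bN$ and both embeddings automatically send it to $f$.
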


\begin{proof}
In the situation of Definition \ref{defn:SubcompleteAboveMu}, let $f:\power(\omega)\To 2^\omega$ be the $<_A$-least such bijection. Then $f\in\ran(\sigma)$. Let $\barf=\sigma^{-1}(f)$. It then follows that $\sigma'(\barf)=\sigma(\barf)=f$, and so, for $\xi<(2^\omega)^\bN$, we have that
\[\sigma(\xi)=\sigma(\barf(\barf^{-1}(\xi)))=\sigma(\barf)(\sigma(\barf^{-1}(\xi)))=f(\sigma(\barf^{-1}(\xi))).\]
But since $\barf^{-1}(\xi)\sub\omega$, it follows that $\sigma(\barf^{-1}(\xi))=\sigma'(\barf^{-1}(\xi))$, and we can trace these identities backwards to arrive at $\sigma'(\xi)$.
\end{proof}

\begin{defn}
I write $\SCabove{\mu}$ for the class of forcing notions that are subcomplete above $\mu$, and $\SC$ stands for the class of subcomplete forcing.
\end{defn}

It was shown in \cite{Fuchs:ParametricSubcompleteness} that $\SC$ is natural, and the proof carries over to $\SCabove{\mu}$. Thus, the difference between Aronszajn tree preservation and its strong form disappears in the context of these forcing classes.
Moreover, in the following observation shows why this is class is particularly important in the present context.

\begin{obs}
\label{obs:SubcompleteAboveMuAddsNoOmegaSequenceInMu}
Suppose $\mu$ is a cardinal. Then forcing notions that are subcomplete above $\mu$ are $[\mu]^\omega$-preserving.
\end{obs}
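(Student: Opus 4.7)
The plan is to establish the following equivalent density statement: for every condition $p\in\P$ and every $\P$-name $\dot x$ with $p\forces_\P\dot x\in[\check\mu]^\omega$, the set of $q\le p$ forcing $\dot x\in\check\V$ is dense below $p$. The strategy is to use subcompleteness above $\mu$ to ``freeze'' the generic value of $\dot x$ by constructing a specific $y\in\V$ from an auxiliary generic over a countable elementary submodel, and then to recognize $\dot x^G$ as $y$ for a suitable condition $q$.

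Fix $p$ and $\dot x$ as above. For sufficiently large $\theta$, pick $\sigma:\bN\prec N=L_\tau^A$ as in Definition \ref{defn:SubcompleteAboveMu}, with $H_\theta\sub N$, $\bN$ countable, transitive and full, $\P,\mu\in\ran(\sigma)$, and $p,\dot x$ among the preserved parameters. Write $\bP=\sigma^{-1}(\P)$, $\bmu=\sigma^{-1}(\mu)$, $\bp=\sigma^{-1}(p)$ and $\bar{\dot x}=\sigma^{-1}(\dot x)$. By elementarity (with $H_\theta\sub N$ guaranteeing that the relevant forcing relation is correctly captured inside $N$), $\bp\forces_\bP\bar{\dot x}\in[\check\bmu]^\omega$ in $\bN$. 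Since $\bN$ is countable, choose in $\V$ a filter $\bG$ that is $\bP$-generic over $\bN$ with $\bp\in\bG$, and set $\bx\mdf\bar{\dot x}^\bG\in\bN[\bG]$; thus $\bx\sub\bmu$, $\bx$ is countable in $\bN[\bG]$, and $\bx\in\V$. Define $y\mdf\sigma``\bx\in\V$.

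Now apply subcompleteness above $\mu$ to the datum $(\sigma,\bG)$ to obtain $q\in\P$ such that every $\P$-generic $G\ni q$ over $\V$ carries, in $\V[G]$, an elementary $\sigma':\bN\prec N$ with $\sigma'(\bP)=\P$, $\sigma'(\bp)=p$, $\sigma'(\bar{\dot x})=\dot x$, $\sigma'\rest\bmu=\sigma\rest\bmu$, and $\sigma'``\bG\sub G$. In particular $p=\sigma'(\bp)\in G$, so $q\le p$. I claim $q\forces_\P\dot x=\check y$. Fix such $G$ and $\sigma'$. By the standard lifting lemma, $\sigma'$ extends in $\V[G]$ to an elementary $\sigma^*:\bN[\bG]\prec N[G]$ satisfying $\sigma^*(\tau^\bG)=\sigma'(\tau)^G$ for every name $\tau\in\bN$; in particular $\sigma^*(\bx)=\dot x^G$. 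On the other hand, $\bx$ is countable in $\bN[\bG]$, so some bijection $\bar f:\omega\to\bx$ lies in $\bN[\bG]$, and by elementarity together with $\sigma^*\rest\omega=\id$ we have $\sigma^*(\bar f)(n)=\sigma^*(\bar f(n))$ for every $n<\omega$, giving
\[\sigma^*(\bx)=\ran(\sigma^*(\bar f))=\{\sigma^*(\bar f(n))\st n<\omega\}=\sigma^*``\bx.\]
Since $\bx\sub\bmu$ and $\sigma^*\rest\bmu=\sigma'\rest\bmu=\sigma\rest\bmu$, this gives $\dot x^G=\sigma^*``\bx=\sigma``\bx=y$, proving the claim.

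Density of the set of conditions forcing $\dot x\in\check\V$ below $p$ now follows, since any $q'\le p$ can be fed into the same construction with $q'$ among the preserved parameters in place of $p$, producing $q''\le q'$ forcing $\dot x\in\check\V$. The crux---and the only step that genuinely uses a hypothesis about $\bx$ beyond its being a subset of $\bmu$---is the equality $\sigma^*(\bx)=\sigma^*``\bx$, which exploits the internal countability of $\bx$ in $\bN[\bG]$; this is precisely where the $[\mu]^\omega$ scope of the conclusion enters, and it is clear that the analogous identity would fail for uncountable subsets of $\bmu$ (e.g.\ for $\bmu$ itself).
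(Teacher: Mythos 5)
Your proof is correct and uses essentially the same mechanism as the paper: invoke subcompleteness above $\mu$ for a countable hull $\sigma:\bN\prec N$, lift $\sigma'$ to an embedding of $\bN[\bG]$ into $N[G]$, and use $\sigma'\rest\bmu=\sigma\rest\bmu$ together with countability to see that the generic object is the pointwise $\sigma$-image of a set in $\V$. The only differences are cosmetic: the paper argues by contradiction with a name for a function $\omega\To\mu$ (which makes the bijection step unnecessary), while you argue directly via density for a name for an element of $[\mu]^\omega$.
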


\begin{proof}
Let $\P$ be subcomplete above $\mu$, and suppose towards a contradiction that there are a $p\in\P$ and a $\P$-name $\dot{a}$ such that $p$ forces with respect to $\P$ that $\dot{a}$ is a function from $\omega$ to $\check{\mu}$ that is not in $\check{\V}$. Let $\theta$ be sufficiently large, $H_\theta\sub L_\tau[A]$, $\sigma:\bN\prec N$ with $\sigma(\kla{\bp,\dot{\bar{a}},\bar{\P},\btheta,\bmu})=\kla{p,\dot{a},\P,\theta,\mu}$, where $\bN$ is countable and full (this can always be arranged). Let $\bG\ni\bp$ be $\bP$-generic over $\bN$, and let $q$ be a condition as guaranteed by the definition of subcompleteness above $\mu$. Let $G\ni q$ be $\P$-generic over $\V$, and let $\sigma':\bN\prec N$ be as in the definition. Then $\sigma'$ lifts uniquely to an elementary embedding from $\bN[\bG]$ to $N[G]$ that maps $\bG$ to $G$. Let's denote this embedding by $\sigma'$ as well. Let $\bar{a}=\dot{\bar{a}}^{\bG}$ and $a=\dot{a}^G$. Then $\sigma'(\bar{a})=a$ is a function from $\omega$ to $\mu$, and for $n<\omega$, $a(n)=\sigma'(\bar{a}(n))=\sigma(\bar{a}(n))$. So, since $\bar{a},\sigma\in\V$, so is $a$, a contradiction.
\end{proof}

This gives us the following version of Theorem \ref{thm:CharacterizationOfATP(omega1,lambda)}.

\begin{lem}
\label{lem:CharacterizationOfATP(omega1,mu)ForSCabovemu}
Let $\lambda$ be a cardinal such that $\lambda^\omega=\lambda$. Then the following are equivalent:
\begin{enumerate}[label=(\arabic*)]
\item
\label{item:BSCFAabovemu}
$\BFA_{\SCabove{\lambda}}(\omega_1,\lambda)$,
\item
\label{item:Sigma1-1AbsolutenessForSCabovemu}
$\Sigma^1_1(\omega_1,\lambda)$-absoluteness for subcomplete forcing above $\lambda$,
\item
\label{item:wATPforSCabovemu}
$\wATP{\omega_1,\lambda}{\SCabove{\lambda}}$.
\end{enumerate}
\end{lem}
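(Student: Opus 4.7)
The proof plan is almost entirely a reduction to Theorem \ref{thm:CharacterizationOfATP(omega1,lambda)}, together with Observation \ref{obs:SubcompleteAboveMuAddsNoOmegaSequenceInMu}. First I would unpack the class-level principles. Each of $\BFA_{\SCabove{\lambda}}(\omega_1,\lambda)$, $\Sigma^1_1(\omega_1,\lambda)$-absoluteness for $\SCabove{\lambda}$, and $\wATP{\omega_1,\lambda}{\SCabove{\lambda}}$ is, by Definitions \ref{def:BoundedForcingAxioms}, \ref{defn:TwoCardinalSigma1-1Absoluteness}, and \ref{defn:ATP} respectively, simply the universal quantification over $\P\in\SCabove{\lambda}$ of the corresponding singleton principle $\BFA_{\{\P\}}(\omega_1,\lambda)$, $\Sigma^1_1(\omega_1,\lambda)$-absoluteness for $\{\P\}$, and $\wATP{\omega_1,\lambda}{\{\P\}}$. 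Hence it suffices to verify the three-way equivalence for each fixed $\P\in\SCabove{\lambda}$.

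To do so, I would invoke Theorem \ref{thm:CharacterizationOfATP(omega1,lambda)}, which asserts precisely this three-way equivalence for any single forcing notion $\P$, provided two hypotheses are met: $\lambda^\omega=\lambda$ (which is assumed in the lemma), and $\P$ being $[\lambda]^\omega$-preserving. The first is given; for the second, I would appeal to Observation \ref{obs:SubcompleteAboveMuAddsNoOmegaSequenceInMu}, which states that every forcing notion subcomplete above the cardinal $\lambda$ is $[\lambda]^\omega$-preserving. One minor thing to check is that the hypothesis of that observation, namely that $\mu$ is a cardinal, is in force here: indeed, $\lambda^\omega=\lambda$ forces $\lambda$ to be an (uncountable) cardinal. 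Putting these together, for every $\P\in\SCabove{\lambda}$ the hypotheses of Theorem \ref{thm:CharacterizationOfATP(omega1,lambda)} are satisfied, and so the three singleton principles are equivalent.

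Universally quantifying over $\P\in\SCabove{\lambda}$ then yields the equivalence of the three class-level principles, completing the proof. There is no real obstacle here; the content of the lemma lies in the previously established theorem and observation, and this lemma simply packages them together for the class $\SCabove{\lambda}$. The only thing to be slightly careful about is distinguishing the role of $\lambda$ in ``subcompleteness above $\lambda$'' (which is what makes the $[\lambda]^\omega$-preservation available via Observation \ref{obs:SubcompleteAboveMuAddsNoOmegaSequenceInMu}) from the role of $\lambda$ as the width parameter in the three principles under consideration—these are the same cardinal, which is exactly what makes the reduction work cleanly.
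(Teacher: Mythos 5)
Your proposal is correct and is exactly the paper's argument: the paper proves this lemma in one line as an immediate consequence of Observation \ref{obs:SubcompleteAboveMuAddsNoOmegaSequenceInMu} and Theorem \ref{thm:CharacterizationOfATP(omega1,lambda)}. Your additional bookkeeping (unpacking the class-level principles as universal quantifications over singletons, and noting that $\lambda^\omega=\lambda$ guarantees $\lambda$ is a cardinal) just makes explicit what the paper leaves implicit.
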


\begin{proof}
This is an immediate consequence of Observation \ref{obs:SubcompleteAboveMuAddsNoOmegaSequenceInMu} and Theorem \ref{thm:CharacterizationOfATP(omega1,lambda)}.
\end{proof}

The following is an iteration theorem for subcomplete forcing above $\mu$, due to Jensen.

\begin{thm}[Jensen {\cite[\S 3, p.~5, Thm.~3]{Jensen:IterationTheorems}}]
\label{thm:IteratingSubcompletenessAboveMu}
Let $\seq{\B_i}{i<\alpha}$ be a revised countable support iteration of complete Boolean algebras and let $\seq{\mu_i}{i<\alpha}$ be a weakly increasing sequence such that for all $i+1<\alpha$, the following conditions are satisfied:
\begin{enumerate}
  \item $\B_i\neq\B_{i+1}$,
  \item $\forces_{\B_i}(\check{\B_{i+1}}/\dot{G}_{\B_i}\ \text{is subcomplete above $\check{\mu}_i$})$,
  \item $\forces_{\B_{i+1}}(\check{\B}_i\ \text{has cardinality $\le\check{\mu}_i$})$.
\end{enumerate}
Then for every $i<\alpha$, $\B_i$ is $\mu_0$-subcomplete.
\end{thm}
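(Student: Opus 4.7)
The plan is to prove this by transfinite induction on $i < \alpha$, following the blueprint of Jensen's proof of the basic iteration theorem for subcomplete forcing, but carefully tracking the role of $\mu_0$ and the growing sequence $\seq{\mu_i}{i<\alpha}$. The base case $i = 0$ is vacuous. For a successor step $i + 1$, I would combine the inductive hypothesis (that $\B_i$ is subcomplete above $\mu_0$) with hypothesis (2) via a two-stage verification: given $\sigma : \bN \prec N$ with the relevant objects (including $\B_{i+1}$, $\mu_0$, and a name for the quotient) in $\ran(\sigma)$, and given a generic $\bar G \subseteq \sigma^{-1}(\B_{i+1})$ over $\bN$ factored as $\bar G_i * \bar G_i^{\mathrm{tail}}$, first apply subcompleteness above $\mu_0$ of $\B_i$ to $\sigma$ and $\bar G_i$ to obtain a condition $p_i \in \B_i$ and, in $\V[G_i]$, an embedding $\sigma_i' : \bN \prec N$ with $\sigma_i'``\bar G_i \subseteq G_i$ and $\sigma_i' \rest \bmu_0 = \sigma \rest \bmu_0$. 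Then inside $\V[G_i]$ apply subcompleteness above $\mu_i$ of the quotient $\B_{i+1}/G_i$ to the lifted embedding $\sigma_i'$ (together with the residual generic $\bar G_i^{\mathrm{tail}}$) to obtain a tail condition; pasting gives a condition $p \in \B_{i+1}$ and, after forcing below it, an embedding $\sigma'$ that agrees with $\sigma$ on $\bmu_0$ (since $\mu_0 \le \mu_i$, the part fixed at stage $i+1$ already contains the part fixed at stage $i$), satisfies $\sigma'``\bar G \subseteq G$, and preserves the hull condition. Hypothesis (3) is what guarantees that the quotient is suitably represented so the composition does not lose the parameters $a_0, \ldots, a_{n-1}$ needed in Definition \ref{defn:SubcompleteAboveMu}.

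For a limit ordinal $\alpha$, I would split into the cases $\cf(\alpha) > \omega$ and $\cf(\alpha) = \omega$. When $\cf(\alpha) > \omega$ and $\B_\alpha$ is the RCS limit, any countable $\sigma : \bN \prec N$ has $\sup \sigma`` (\bN \cap \bar\alpha)$ bounded below $\alpha$, so the verification reduces essentially to the successor case at a bounded stage, using that RCS limits at ordinals of uncountable cofinality behave like direct limits on countable information. The genuinely hard case is $\cf(\alpha) = \omega$: here I would fix an $\omega$-cofinal sequence $\seq{\alpha_n}{n < \omega}$ in $\sigma^{-1}(\alpha) \subseteq \bar\alpha$, and attempt to build the embedding $\sigma'$ and the required condition $p \in \B_\alpha$ step by step along the $\alpha_n$. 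At each finite stage the inductive hypothesis provides a condition $p_n \in \B_{\sigma(\alpha_n)}$ and an embedding $\sigma_n'$ with the usual subcompleteness-above-$\mu_0$ properties, coherently refining the previous one; the challenge is to fuse these into a single condition in the RCS limit and a single embedding definable in $\V[G]$.

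The main obstacle is precisely this limit-of-countable-cofinality case, which is where Jensen's technology of \emph{liftups} and \emph{Barwise-theory arguments} enters. The key idea I would use is to encode the construction as an $\bN$-definable infinitary theory $T \in L_\delta(\bN)$ whose models correspond to coherent sequences of embedding-condition pairs $\seq{\kla{p_n, \sigma_n'}}{n<\omega}$ extending a given initial segment; then, using Barwise compactness in the admissible structure $L_\delta(\bN)$ together with the fullness of $\bN$, one shows that $T$ is consistent, so a model exists in some generic extension. The crucial additional content above the usual iteration theorem is that the constraint $\sigma_n' \rest \bmu_0 = \sigma \rest \bmu_0$ must be preserved throughout, which is automatic because $\bmu_0$ is fixed from the outset and each stage respects a restriction to $\bmu_{\alpha_n} \ge \bmu_0$; one must verify that the coherence conditions defining $T$ actually include this pointwise fixing clause. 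Once a model of $T$ is obtained in $\V^{\B_\alpha}$ below an appropriate master condition $p$, reading off the union of the $\sigma_n'$ yields the desired $\sigma' : \bN \prec N$, completing the limit step and hence the induction.
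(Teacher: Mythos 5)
The first thing to note is that the paper contains no proof of this theorem: it is quoted from Jensen's handwritten notes \cite[\S 3, p.~5, Thm.~3]{Jensen:IterationTheorems} and used as a black box (for instance in Observation \ref{obs:Impossibility}). So your attempt cannot be compared against a proof in the paper; what can be assessed is whether it stands on its own, and it does not yet.

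Your skeleton is the right one --- induction on $i$, a two-step lemma at successors, a case split on $\cf(\alpha)$ at limits, Barwise/liftup technology at $\omega$-cofinal limits, and the correct bookkeeping point that agreement on $\bmu_i$ subsumes agreement on $\bmu_0$ since the $\mu_i$ are weakly increasing. But the proposal defers exactly the step that constitutes the theorem: at the $\omega$-cofinal limit you write that one encodes the construction as an infinitary theory $T$ and that ``using Barwise compactness \ldots one shows that $T$ is consistent.'' That consistency argument, together with the fusion of the would-be model of $T$ into a single RCS master condition and the verification of the hull condition (3) of Definition \ref{defn:SubcompleteAboveMu} for the resulting $\sigma'$, \emph{is} the technical content of Jensen's proof, and none of it is carried out or even set up precisely (what the vocabulary and axioms of $T$ are, why $T$ is definable over the admissible set $L_\delta(\bN)$, where fullness of $\bN$ enters). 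Two further points are wrong or too quick. First, hypothesis (3), $\forces_{\B_{i+1}}\card{\check{\B}_i}\le\check{\mu}_i$, has nothing to do with ``not losing the parameters $a_0,\ldots,a_{n-1}$'' at successor steps; the paper itself notes immediately after the theorem that no collapsing at all is needed for iterations of finite length, so (3) can only matter at limit stages, where it is what makes the revised-countable-support analysis go through (exactly like the collapse to $\omega_1$ in Jensen's original iteration theorem for subcomplete forcing). Second, the case $\cf(\alpha)>\omega$ does not ``reduce essentially to the successor case at a bounded stage'': a generic for the RCS limit $\B_\alpha$ is not determined by its restriction to any bounded stage, and $(\sigma')``\bG\sub G$ must be arranged for a $\bG$ that is generic for the full limit algebra over $\bN$; this case is easier than $\cf(\alpha)=\omega$ but still requires the support analysis. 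In short, you have reproduced the map of Jensen's argument, but the proof itself is missing.
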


In fact, no collapsing is necessary for iterations of finite length - the proof of the Two Step Lemma \cite[\S4, p.~136, Theorem 1]{Jensen2014:SubcompleteAndLForcingSingapore} for subcomplete forcing goes through to show this. This would suggest that $\SCabove{\mu}$ is a very canonical class, and that the theory of its bounded forcing axioms might behave similarly to that of other forcing classes (for $\mu\neq\omega_1$). This will turn out not to be the case, however. Let me make a couple of simple observations first.

\begin{obs}
\label{obs:MeaningOfSCaboveAnOrdinal}
Let $\mu$ be an ordinal and $\P$ a notion of forcing. Then $\P$ is subcomplete above $\mu$ iff $\P$ is subcomplete above $\card{\mu}$ (the cardinality of $\mu$).
\end{obs}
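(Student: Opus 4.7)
Write $\kappa=\card{\mu}$ and assume $\kappa<\mu$ (otherwise the claim is vacuous, since $\mu=\kappa$). Both directions reduce to bookkeeping around the canonical bijection between $\kappa$ and $\mu$, in the spirit of the proof of Observation \ref{obs:SCaboveContinuumIsFree}. The only nontrivial fact being exploited is that an elementary submodel of $N$ --- in particular $\sigma``\bN$ --- is closed under terms definable in $N$, so that whenever $\mu\in\ran(\sigma)$, the $<_A$-least bijection $f:\kappa\to\mu$ in $N$ lies in $\ran(\sigma)$ as well, and hence $\barf=\sigma^{-1}(f)\in\bN$ is a well-defined bijection $\bkappa\to\bmu$ in $\bN$ (where $\bkappa=\sigma^{-1}(\kappa)$).

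The direction subcomplete above $\mu$ $\implies$ subcomplete above $\kappa$ is essentially free. Given a setup testing SC above $\kappa$, one may without loss of generality arrange $\mu\in\ran(\sigma)$ (by the standard trick of enlarging $\bN$ to a countable full structure whose image contains $\mu$, applying the hypothesis there, and restricting the resulting witnessing embedding back). Applying SC above $\mu$ with the parameter list including $\bmu$ yields $\sigma'$ with $\sigma'\rest\bmu=\sigma\rest\bmu$; since $\kappa\le\mu$ gives $\bkappa\le\bmu$ by elementarity, this immediately delivers the required $\sigma'\rest\bkappa=\sigma\rest\bkappa$, and the other clauses are inherited from the hypothesis.

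For the converse, given a setup testing SC above $\mu$, apply the hypothesis SC above $\kappa$ with the parameter list augmented by both $\bmu$ and $\barf$. This produces $\sigma'$ satisfying $\sigma'\rest\bkappa=\sigma\rest\bkappa$, $\sigma'(\bmu)=\mu$, and $\sigma'(\barf)=f$. For any $\xi<\bmu$, writing $\xi=\barf(\eta)$ for the unique $\eta<\bkappa$, the chain of equalities
\[\sigma'(\xi)=\sigma'(\barf)(\sigma'(\eta))=f(\sigma(\eta))=\sigma(\barf)(\sigma(\eta))=\sigma(\xi)\]
promotes the agreement on $\bkappa$ to agreement on all of $\bmu$. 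The main --- and really only --- obstacle is this bookkeeping: ensuring that both $\bmu$ and $\barf$ travel along as distinguished parameters so that $\sigma'$ is forced to interpret them exactly as $\sigma$ does. Once this is in place, the computation above is immediate, and the Hull-agreement clause (3) in Definition \ref{defn:SubcompleteAboveMu} is inherited from the hypothesis since the augmented range generates the same hull above $\delta(\P)$ as the original one.
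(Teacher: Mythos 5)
Your converse direction (subcomplete above $\card{\mu}$ implies subcomplete above $\mu$) is correct and is essentially the paper's own argument: the test embeddings for subcompleteness above $\mu$ have $\mu\in\ran(\sigma)$ by definition, hence also $\kappa=\card{\mu}$, so a map $\barf:\bkappa\To\bmu$ in $\bN$ onto $\bmu$ can be carried along as one of the distinguished parameters $a_i$, and your chain of equalities promoting agreement on $\bkappa$ to agreement on $\bmu$ is exactly the paper's computation (the paper uses a surjection chosen in $\bN$ rather than the $<_A$-least bijection, an immaterial difference).

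The gap is in the forward direction, concentrated in the parenthetical claim that one may "without loss of generality arrange $\mu\in\ran(\sigma)$" by enlarging $\bN$ and restricting back. This is not a routine reduction, and as described it fails. The data you must handle includes the given generic $\bG$ over $\bN$: if you pass to a larger countable $\sigma^*:\bN^*\prec N$ with $\mu\in\ran(\sigma^*)$ and connecting map $k=(\sigma^*)^{-1}\compose\sigma:\bN\prec\bN^*$, then to "apply the hypothesis there" you need a filter $\bG^*$ which is $(\sigma^*)^{-1}(\P)$-generic over $\bN^*$ with $k``\bG\sub\bG^*$; otherwise the resulting $\sigma'$ gives no control over the original $\bG$, and you cannot verify clause (2) of Definition \ref{defn:SubcompleteAboveMu}, namely $(\sigma'\compose k)``\bG\sub G$, for the original setup. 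Such a $\bG^*$ need not exist: $\bN^*$ has more dense sets than the image of $\bN$, and for a subcomplete forcing that is not countably closed (Namba forcing under \CH, say) the countable directed set $k``\bG$ need not have a lower bound in, nor extend to any generic filter over, $\bN^*$. Beyond this, fullness of the enlarged structure and clause (3) (the Hull condition) for the restricted embedding $\sigma'\compose k$ are not addressed. These obstacles are precisely why the paper does not argue this way: it instead invokes Jensen's theorem (from \cite{Jensen:IterationTheorems}) that \emph{weak} subcompleteness above $\alpha$ --- the variant of Definition \ref{defn:SubcompleteAboveMu} in which one is allowed to demand that one fixed parameter $z$ (here $z=\mu$) lie in $\ran(\sigma)$ --- already implies subcompleteness above $\alpha$. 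That nontrivial theorem is exactly the licensing of your "without loss of generality"; your proof needs to cite it (or reprove it) at this point. Once it is in place, the remainder of your forward direction (agreement on $\bmu$ restricts to agreement on $\bkappa\le\bmu$) is the same as the paper's.
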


\begin{proof}
For the direction from left to right, assume that $\P$ is subcomplete above $\mu$.

I will use a fact about \emph{weak} subcompleteness above $\alpha$, a concept originally introduced by Jensen in \cite[\S2, pages 3, 8]{Jensen:SPSCF} for subproperness and subcompleteness. The weakening is that in each case, it is only required that the conditions of the original definition hold if some fixed parameter $z$ is in the range of $\sigma$ (using the notation of the definition), and Jensen shows that the weak versions of these concepts are actually equivalent to the original ones. In \cite[\S3, p.~11]{Jensen:IterationTheorems}, Jensen states that the similarly defined condition of weak subcompleteness above $\alpha$ is implies (and hence is equivalent to) subcompleteness above $\alpha$.\footnote{In \cite{Jensen:IterationTheorems}, Jensen refers to subcompleteness above $\mu$ as $\mu$-subcompleteness. I will use ``subcompleteness above $\mu$,'' since this usage predates the latter one.}

To show that $\P$ is also subcomplete above $\card{\mu}$, it suffices to show that it is weakly subcomplete above $\card{\mu}$, and the parameter I want to require to be in the range of $\sigma$ is the ordinal $\mu$. The argument is then trivial, because in the notation of Definition \ref{defn:SubcompleteAboveMu}, we can ensure that $\sigma'\rest\bmu=\sigma\rest\bmu$ (which we may, because $\mu\in\ran(\sigma)$). But then $\sigma'\rest\card{\bmu}^\bN=\sigma\rest\card{\bmu}^\bN$, as required.

For the converse, assume that $\P$ is subcomplete above $\card{\mu}$. To see that $\P$ is also subcomplete above $\mu$, let $\sigma:\bN\prec N$ be as usual, with $\mu\in\ran(\sigma)$. Let $\sigma(\bmu)=\mu$, $\sigma(\bP)=\P$, and fix a $\bG\sub\bP$ generic over $\bN$.
Since $\mu\in\ran(\sigma)$, so is $\kappa=\card{\mu}$, and $\bkappa=\sigma^{-1}(\kappa)=\card{\bmu}^\bN$. Fix a surjection $\barf:\bkappa\To\bmu$, $\barf\in\bN$. By subcompleteness above $\kappa$, let $p\in\P$ be such that $p$ forces wrt.~$\P$ that there is a $\sigma':\bN\prec N$ with the usual properties, and such that $\sigma'\rest\bkappa=\sigma\rest\bkappa$. In addition, we may require that $\sigma'(\barf)=\sigma(\barf)=f$.
It follows that $\sigma'\rest\bmu=\sigma\rest\bmu$, because for $\xi<\bmu$, there is a $\zeta<\bkappa$ such that $\barf(\zeta)=\xi$, so that
\[\sigma'(\xi)=\sigma'(\barf(\zeta))=\sigma'(\barf)(\sigma'(\zeta)=\sigma(\barf)(\sigma(\zeta))=\sigma(\barf(\zeta))=\sigma(\xi).\]
This shows that $\P$ is subcomplete above $\mu$.
\end{proof}

\begin{obs}
\label{obs:SmallForcingSCabovemu}
If a forcing notion $\P$ is subcomplete above $\mu$, where $\mu\ge\card{\P}$, then $\P$ is forcing equivalent to a countably closed forcing.
\end{obs}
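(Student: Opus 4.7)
The plan is to exploit the fact that when $|\P|\le\mu$, we may place all conditions of $\P$ inside $\mu$, so that the clause $\sigma'\rest\bmu=\sigma\rest\bmu$ in Definition \ref{defn:SubcompleteAboveMu} forces $\sigma'$ to agree with $\sigma$ on every condition of $\bP$. This collapses subcompleteness above $\mu$ into countable closure, with no real work left.

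First, by Observation \ref{obs:MeaningOfSCaboveAnOrdinal} I may assume $\mu$ is a cardinal. Using $|\P|\le\mu$, fix an injection of $\P$ into $\mu$ and transport the partial order to obtain a forcing $\Q\sub\mu$ that is isomorphic, hence forcing equivalent, to $\P$. Since subcompleteness above $\mu$ is a property of the abstract poset, $\Q$ is still subcomplete above $\mu$. It then suffices to show that $\Q$ is countably closed.

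Fix a decreasing sequence $\vec{q}=\seq{q_n}{n<\omega}$ of conditions in $\Q$, pick $\theta$ sufficiently large, and choose $\sigma:\bN\prec N$ as in Definition \ref{defn:SubcompleteAboveMu}, with $\bN$ countable, transitive and full, and $\Q,\mu,\vec{q}\in\ran(\sigma)$. Set $\bQ=\sigma^{-1}(\Q)$, $\bmu=\sigma^{-1}(\mu)$, and $\bvq=\sigma^{-1}(\vec{q})$. Since $\Q\sub\mu$, elementarity gives $\bQ\sub\bmu$, and because $\omega\sub\bN$, $\sigma(\bvq(n))=q_n$ for every $n$. As $\bvq$ is decreasing, $\{\bvq(n):n<\omega\}$ generates a filter on $\bQ$, and by the countability of $\bN$ this extends to a $\bQ$-generic $\bG$ over $\bN$ containing each $\bvq(n)$. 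Applying subcompleteness above $\mu$ yields a condition $p\in\Q$ such that in every generic extension $\V[G]$ with $p\in G$ there is some $\sigma':\bN\prec N$ with $\sigma'``\bG\sub G$ and $\sigma'\rest\bmu=\sigma\rest\bmu$. Since $\bvq(n)\in\bmu$, this gives $\sigma'(\bvq(n))=\sigma(\bvq(n))=q_n$, and because $\bvq(n)\in\bG$, $q_n\in G$. As this holds for every $\Q$-generic $G\ni p$, we conclude $p\le q_n$ in $\Q$ for every $n$, so $p$ is a common lower bound of $\vec{q}$.

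The only step requiring care is the transfer to an isomorphic copy whose universe lies inside $\mu$; after that, the requirement $\sigma'\rest\bmu=\sigma\rest\bmu$ does all of the work, and the argument is essentially a one-line unfolding of the definition. No iteration, no appeal to genericity of $\P$ itself, and no need to invoke the technology behind any of the iteration theorems is required.
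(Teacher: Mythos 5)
There is a genuine gap, and it occurs at the step where you claim that, ``by the countability of $\bN$,'' the filter generated by $\{\bvq(n)\st n<\omega\}$ extends to a $\bQ$-generic filter $\bG$ over $\bN$ containing each $\bvq(n)$. Countability of $\bN$ lets you build \emph{some} generic filter, but not one containing a prescribed descending $\omega$-sequence: when you construct the generic descending sequence, nothing guarantees that after meeting a dense set you remain compatible with the later terms $\bvq(n)$. Indeed the claim is false in general. Take $\bQ$ to be Cohen forcing (finite binary strings) as computed in $\bN$, and let $\bvq(n)$ be the string of $n$ zeros: the set of strings containing a $1$ is dense and lies in $\bN$, and any filter meeting it omits all but finitely many $\bvq(n)$. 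So no generic filter over $\bN$ contains the whole sequence. The existence of such a $\bG$ for the poset at hand is essentially equivalent to the countable closure you are trying to prove (it holds when the sequence has a lower bound in $\bN$'s sense), so this step is where the entire content of the observation is hidden, not a triviality.

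There is also a secondary problem: even granting such a $\bG$, from ``$q_n\in G$ for every generic $G\ni p$'' you may only conclude that $p$ is below each $q_n$ in the \emph{separative quotient}, not that $p\le q_n$ in $\Q$; and in fact the literal conclusion ``$\Q$ is countably closed'' is too strong to be provable, since subcompleteness above $\mu$ is insensitive to the kind of repackaging that destroys literal closure while preserving forcing equivalence. The paper's proof avoids both issues: it takes an \emph{arbitrary} $\bG$ generic over $\bN$ (no prescribed sequence to capture), uses $\sigma'\rest\bmu=\sigma\rest\bmu$ together with $\bP\sub\bmu$ to get $\sigma``\bG=(\sigma')``\bG\sub G$, and thereby shows that $\P$ is \emph{complete} in Jensen's sense. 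It then invokes Jensen's theorem (in the full-$\bN$ variant, see Minden's thesis) that complete forcing notions are forcing equivalent to countably closed ones. That last theorem---which produces a countably closed dense subset of the Boolean completion, not closure of the poset itself---is genuinely nontrivial, and your proposal in effect tries to reprove it in one line; that is exactly what cannot be done.
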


\begin{proof}
Let us assume that the conditions in $\P$ are ordinals less than $\mu$. In the usual setup, let $p\in\P$ be such that whenever $G\ni p$ is $\P$-generic, then in $\V[G]$, there is a $\sigma':\bN\prec N$, with $\sigma'(\va)=\sigma(\va)$, $\sigma'\rest\bmu=\sigma\rest\bmu$ (where $\sigma(\bmu)=\mu$), $\sigma'(\bP)=\sigma(\bP)=\P$, $(\sigma')``\bG\sub G$ and $\text{Hull}^N(\ran(\sigma)\cup\delta)=\text{Hull}^N(\ran(\sigma')\cup\delta)$, where $\delta=\delta(\P)$. Then since $\sigma'\rest\bmu=\sigma\rest\bmu$, it follows that $\sigma'\rest\bP=\sigma\rest\bP$, and hence, $\sigma``\bG=(\sigma')``\bG\sub G$. Thus, $\P$ is ``complete'' (which essentially means that $\sigma'$ in the definition of subcompleteness can be chosen to be equal to $\sigma$ - actually, completeness is slightly stronger than this, because in the definition of completeness, $\bN$ is not assumed to be full), and Jensen showed that complete forcing notions are forcing equivalent to countably closed ones (this argument goes through if $\bN$ is required to be full, see Minden \cite{Kaethe:Diss}).
\end{proof}

Returning to bounded forcing axioms, the following fact for the case where $\Gamma$ is the class of all proper forcing notions is due to Miyamoto \cite{Miyamoto:WeakSegmentsOfPFA}, and the version for subcomplete forcing is due to Fuchs \cite[Lemma 3.10, Lemma 4.13, Observation 4.7 and Lemma 4.9]{Fuchs:HierarchiesOfForcingAxioms}. The statement of the fact uses a large cardinal concept dating back to Miyamoto \cite{Miyamoto:WeakSegmentsOfPFA}, who introduced a hierarchy of localized reflecting cardinals, and showed among other things that $\BPFA(\omega_1,\omega_2)$ is equiconsistent with the existence of a cardinal $\kappa$ that's $\kappa^+$-reflecting (I called such a cardinal $+1$-reflecting in \cite{Fuchs:HierarchiesOfForcingAxioms}). Independently, Villaveces \cite{Villaveces:StrongUnfoldability} introduced the concept of a strongly unfoldable cardinal, which later turned out to be equivalent to a $+1$-reflecting cardinal. I showed in \cite{Fuchs:HierarchiesOfForcingAxioms} that Miyamoto's result extends to subcomplete forcing as well: the consistency strength of $\BSCFA(\omega_1,\omega_2)$ is a $+1$-reflecting cardinal. There is a distinction between unfoldability and strong unfoldability, but the consistency strengths of these concepts are the same, and in $L$ they are equivalent.

\begin{fact}
\label{fact:BFAatOmega2andUnfoldability}
Let $\Gamma$ be either the class of subcomplete or of proper forcing notions. \begin{enumerate}
  \item If $\BFA_\Gamma(\omega_1,\omega_2)$ holds, then $\omega_2$ is unfoldable in $L$.
  \item Assume $\V=L$ and $\kappa$ is unfoldable. Then there is a $\kappa$-c.c.~forcing $\P$ in $\Gamma$ such that if $G$ is generic for $\P$, then $\kappa=\omega_2^{L[G]}$ and $L[G]\models\BFA_\Gamma(\omega_1,\omega_2)$.
\end{enumerate}
\end{fact}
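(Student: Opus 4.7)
The plan is to attack the two halves separately. For part (1), I would extract a reflection principle for $\omega_2$ from $\BFA_\Gamma(\omega_1,\omega_2)$ and transfer it down to $L$. By Fact \ref{fact:CharacterizationOfBFAkappa}, the axiom says that whenever a transitive structure $M$ of size $\omega_2$ in a language with $\omega_1$ predicates satisfies $\forces_\P \phi(\check M)$ for some $\Sigma_1$ formula $\phi$ and some $\P\in\Gamma$, there are in $V$ a transitive $\bM$ and an elementary $j:\bM\prec M$ such that $\phi(\bM)$ holds. Given an ordinal $\alpha$ and a transitive $M\in L$ of size $\omega_1^L$ (say $M=L_\beta$ for a suitable $\beta$) containing $\alpha$ and the required parameters, I would force with $\Col(\omega_1,\alpha)$, which is countably closed and hence lies in both instances of $\Gamma$. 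In the extension $M$ becomes countable, so by its fullness there is an unfolding embedding $j^*:M\prec N$ with $\crit(j^*)=\omega_1^L$ and $j^*(\omega_1^L)\geq\alpha$; the existence of such a $j^*$ is a $\Sigma_1$ assertion about $M$, $\alpha$ and auxiliary parameters. Thus $\BFA_\Gamma(\omega_1,\omega_2)$ reflects it to give $j:\bM\prec M$ in $V$ with $\phi(\bM)$; re-running the argument with $M\in L$ and pushing the resulting $\bM$ back into $L$ via its $L$-constructibility and absoluteness of elementarity completes the unfoldability witness in $L$ at $\alpha$.

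For part (2), starting from $V=L$ with $\kappa$ unfoldable, I would perform a length-$\kappa$ iteration $\langle \P_i,\dot\Q_i \mid i<\kappa\rangle$, using countable support when $\Gamma$ is proper and revised countable support when $\Gamma$ is subcomplete. Each $\dot\Q_i$ would be forced to be an element of $\Gamma$ of size less than $\kappa$, so that the iteration is $\kappa$-c.c.\ via the standard $\Delta$-system argument (using that $\kappa$ is inaccessible by unfoldability) and preserves $\kappa$ as $\omega_2$ after collapsing everything below it. A Laver-style function $f:\kappa\to L_\kappa$ — which exists for an unfoldable $\kappa$ in $L$ via an $L$-definable construction — guides the bookkeeping so that every pair $(\P,\calA)$ with $\P\in\Gamma$ of size $\omega_1$ and $\calA$ a family of $\omega_1$ antichains of size $\omega_1$ in $\P$ is anticipated. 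To verify $\BFA_\Gamma(\omega_1,\omega_2)$ in $L[G]$, given such $(\dot\P,\dot\calA)$ with names of rank below $\kappa$, I would apply an unfolding embedding $j:L_\theta\to L_{\theta'}$ with $\crit(j)=\kappa$ and $j(f)(\kappa)$ coding $(\dot\P,\dot\calA)$; the iterand of $j(\P_\kappa)$ at coordinate $\kappa$ is then $\dot\P$ itself, and standard lifting machinery produces, in $L[G]$, the desired $\calA$-generic filter, in the spirit of the original argument of Goldstern-Shelah \cite{GoldsternShelah:BPFA}.

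The principal obstacle will be the iteration theorem for $\Gamma$ itself: for proper forcing this is Shelah's countable support preservation theorem, while for subcomplete forcing it is Jensen's revised countable support iteration theorem; each requires verifying that the iteration lies in $\Gamma$ and that size-${<}\kappa$ iterands combine to give the $\kappa$-c.c.\ under the chosen support. The subcomplete case is the more delicate one, since revised countable support is sensitive to what the iterands look like along unfolding embeddings, and the Laver function must be constructed carefully in $L$ so that the anticipated data is genuinely realizable along $j$. Once the iteration theorem and the Laver function are in hand, the reflection argument producing the generic filter is essentially the standard template for bounded forcing axiom consistency proofs.
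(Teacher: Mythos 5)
The paper does not actually prove this Fact: it is quoted from the literature, with the proper case credited to Miyamoto and the subcomplete case to the author's earlier paper on hierarchies of forcing axioms (the mechanism is nonetheless visible inside this paper, in the proofs of Observation \ref{obs:Impossibility} and Lemma \ref{lem:ConsistencyStrengthOfATPatSuccessorOfContinuum}). Measured against those arguments, both halves of your proposal have genuine gaps.

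For part (1), there is a structural objection that is decisive: the only forcing to which you ever apply $\BFA_\Gamma(\omega_1,\omega_2)$ is $\Col{\omega_1,\alpha}$, which is countably closed. But by Observation \ref{obs:TwoCardinalSigma1-1absForCCforcing} together with Theorem \ref{thm:EquivalenceBtwBFAandSigma1-1-absoluteness}, $\BFA_{\{\P\}}(\omega_1,\lambda)$ is a theorem of \ZFC for every countably closed $\P$ and every uncountable cardinal $\lambda$. So if your reduction were sound, \ZFC alone would prove that some uncountable cardinal is unfoldable in $L$; since unfoldable cardinals are regular limit cardinals, this contradicts $\V=L$, where $\omega_1$ and $\omega_2$ are successor cardinals. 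The strength in the known proofs comes precisely from applying the axiom to members of $\Gamma$ that are \emph{not} countably closed: one forces with $\Col{\omega_1,\theta}$ \emph{followed by} the club-shooting forcing $\P_A$ through a stationary set of ordinals singular in $L$, and the reflected $\Sigma_1$ statement concerns Jensen's global $\square$-sequence, whose coherence forces the reflected ordinal to be regular in $L$ (this is exactly the argument in the proof of Observation \ref{obs:Impossibility}, with covering used to see that the relevant ordinals are $L$-singular). Nothing in your sketch plays this role. There are also local errors: $\Col{\omega_1,\alpha}$ preserves $\omega_1$, so your model $M$ of size $\omega_1^L$ does not ``become countable'' in the extension; Jensen's fullness is a resurrection-type property of models and does not yield elementary embeddings with prescribed critical point and large image --- the existence of such embeddings is the large-cardinal content you are trying to prove, not a free ingredient; and your embeddings have critical point $\omega_1^L$, so at best they would address unfoldability of $\omega_1^L$, whereas the Fact concerns $\omega_2^{\V}$.

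For part (2), the architecture (length-$\kappa$ iteration with the appropriate support, $\kappa$-c.c.\ via inaccessibility and a $\Delta$-system argument, bookkeeping, reflection in the verification) matches the Goldstern--Shelah/Miyamoto template, but your verification only treats pairs $(\P,\calA)$ with $\P\in\Gamma$ of size $\omega_1$ and $\omega_1$ many antichains of size $\omega_1$. That at best yields a bounded forcing axiom at $(\omega_1,\omega_1)$ for small forcings. $\BFA_\Gamma(\omega_1,\omega_2)$ quantifies over \emph{arbitrary} $\P\in\Gamma$ (Namba forcing, club shooting at large regular cardinals, and so on) and over maximal antichains of size $\omega_2$. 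The reduction of an arbitrary $\P\in\Gamma$ to an iterand of size less than $\kappa$ is the heart of the verification, and it is exactly where the reflection/unfoldability of $\kappa$ is used; moreover, the width-$\omega_2$ antichains are exactly why a strongly unfoldable ($+1$-reflecting) cardinal is needed rather than a merely reflecting one, which is what your scheme would (at best) consume. Two further points: in the subcomplete case, Jensen's iteration theorem requires collapses to be interleaved so that each initial segment of the iteration becomes small (compare the hypotheses of Theorem \ref{thm:IteratingSubcompletenessAboveMu}); and the master-condition lifting you invoke is the supercompact/\PFA template, which does not transfer verbatim to unfoldability embeddings of size-$\kappa$ models --- for a bounded axiom one needs only an $\calA$-generic filter, and that is what the reflection argument, not a lifted embedding, produces.
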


Part (1) of this fact goes through in the case $\Gamma=\SCabove{\omega_2}$ as well, see the proof of Lemma \ref{lem:ConsistencyStrengthOfATPatSuccessorOfContinuum}. It is less clear what would be the correct version of part (2) in this context. In the original setting, the forcing notion in part (2) can be presented as a preparatory forcing, such as a fast function forcing, which is much more than countably closed (and hence belongs to all the forcing classes of interest here), and $\kappa$-c.c., followed by a length $\kappa$ iteration of forcing notions in $\Gamma$, each iterand having size less than $\kappa$. The problem in formulating a version of part (2) for $\SCabove{\omega_2}$ is that the meaning of ``$\omega_2$'' changes throughout the iteration. A version that is true uses $\Gamma=\SCabove{\tau}$, where $\tau=\omega_2^L$. At some point in the iteration, $\omega_2^L$ is collapsed to $\omega_1$, though, and thus, from that point on, $\SCabove{\tau}$ is the same as $\SC$, by Observations \ref{obs:MeaningOfSCaboveAnOrdinal} and \ref{obs:SCaboveContinuumIsFree}. Thus, there seems to be no real difference between this approach versus just forcing with a subcomplete forcing to produce a model of $\BSCFA(\omega_1,\omega_2)$. It would be much more appealing if one could force with forcing notions that are subcomplete above $\kappa$, to produce a model in which $\kappa=\omega_2$ and $\BFA_{\SCabove{\omega_2}}(\omega_2)$ holds. However, this cannot be accomplished by iterating forcing notions of size less than $\kappa$, by Observation \ref{obs:SmallForcingSCabovemu}, since these are (equivalent to) countably closed forcing notions. Actually, it turns out that it cannot be done at all. To show this, I will use some subcomplete forcing notions.

\begin{fact}

\begin{enumerate}
  \item If $2^\omega<\tau$, where $\tau$ is a regular cardinal, and if $A\sub S^\tau_\omega$ is stationary, then the forcing notion $\P_A$ to shoot a club through $A$ is subcomplete. Conditions in $\P_A$ are increasing, continuous functions from some countable successor ordinal to $A$, ordered by reverse inclusion. See \cite[p.~134, Lemma 6.3]{Jensen2014:SubcompleteAndLForcingSingapore} for the proof, due to Jensen, which requires the extra assumption that $2^\omega<\tau$. This omission was observed by Sean Cox.
  \item Assuming \CH, Namba forcing $\naturals$, in the form where conditions are subtrees of ${}^{{<}\omega}\omega_2$ that are highly splitting, that is, a subtree $T\sub{}^{{<}\omega}\omega_2$ is in $\naturals$ iff $T\neq\leer$ and for every $s\in T$, the set $\{t\in T\st s\sub t\}$ has cardinality $\omega_2$. The ordering on $\naturals$ is inclusion. See \cite[p.~132, Lemma 6.2]{Jensen2014:SubcompleteAndLForcingSingapore} for Jensen's proof that $\naturals$ is subcomplete under \CH. Jensen proves the subcompleteness of some other Namba variants in \cite{Jensen:SCofNambaTypeForcings}.
\end{enumerate}
\end{fact}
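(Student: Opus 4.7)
The plan is to verify each clause of the Fact directly from Definition \ref{defn:SubcompleteAboveMu} (applied with $\mu=\omega_1$, since subcompleteness above $\omega_1$ is the same as subcompleteness). In the standard setup I fix sufficiently large $\theta$, an embedding $\sigma:\bN\prec N$ with $N=L_\tau^{A'}$, $H_\theta\sub N$, $\bN$ countable and full, and a filter $\bG$ generic over $\bN$ for $\bP=\sigma^{-1}(\P)$. I then need to produce a master condition $p\in\P$ together with, for each $\P$-generic $G\ni p$, a replacement embedding $\sigma'\in\V[G]$ that agrees with $\sigma$ on the named parameters, lifts $\bG$ into $G$, and generates the same hull of $N$ above $\delta(\P)$.

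For (1), let $\barg=\bigcup\bG$, which a genericity argument shows is a strictly increasing continuous function from $\omega_1^{\bN}$ into $\bA$. The obvious candidate for the master condition is the one-step extension
\[p=(\sigma\circ\barg)\cup\{\kla{\omega_1^{\bN},\mu_\sigma}\},\qquad \mu_\sigma=\sup_{\xi<\omega_1^{\bN}}\sigma(\barg(\xi)).\]
Since $\omega_1^{\bN}$ is countable in $\V$, $\mu_\sigma$ has $\V$-cofinality $\omega$, so $\mu_\sigma\in S^\tau_\omega$. The serious obstacle is that we need $\mu_\sigma\in A$, not merely $\mu_\sigma\in S^\tau_\omega$, in order for $p$ to be a condition in $\P_A$. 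The way I would resolve this is by a ``hull liberation'' argument: as $\sigma'$ is allowed to vary over embeddings agreeing with $\sigma$ on the prescribed parameters and inducing the same Skolem hull $\Hull^N(\delta(\P)\cup\ran(\sigma))$, the resulting sups $\mu_{\sigma'}$ should fill out a club $C\sub\tau$. Stationarity of $A\sub S^\tau_\omega$ then picks $\sigma'$ with $\mu_{\sigma'}\in A\cap C$, giving a legitimate condition. The hypothesis $2^\omega<\tau$ is used precisely to guarantee that this set of realizable values is unbounded and closed in the regular cardinal $\tau$, by ensuring that countable Skolem hulls of $N$ over the required parameters are cofinal in $[\tau]^\omega$.

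For (2), I would run a fusion argument inside $\naturals$. Using countability and fullness of $\bN$, enumerate in order type $\omega$ the dense open subsets of $\bP$ lying in $\bN$, together with nice names coding the eventual $\sigma'$. Build in $\V$ a decreasing sequence of Namba trees $T_n$ with progressively longer trunks, so that at stage $n$, above every length-$n$ node of $T_n$, the tree has been thinned to meet the $n$-th dense set while preserving that every non-terminal node has $\omega_2$-many immediate successors. The intersection $p=\bigcap_n T_n$ should then be the master condition, and its generic decodes the required $\sigma'$. The use of \CH enters twice: first, it bounds the number of antichains to be met in $\bN$ by $\omega_1$, allowing the fusion to be organized so each thinning throws away at most $\omega_1$-many successors at each node; second, it provides canonical real codes for the relevant $\bN$-names, so the eventual $\sigma'$ can be read off uniformly from the generic branch.

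The main obstacle in both parts is preserving the defining structural property of conditions under the proposed construction: in (1), it is ensuring $\mu\in A$, which is a global stationarity-versus-club argument, while in (2), it is ensuring that $p=\bigcap_n T_n$ is still highly splitting, a local combinatorial argument balancing countably many thinning requirements against the need to retain $\omega_2$-wide branching above every node. In each case the proof hinges on the interplay between the countability of $\bN$ and the cardinal arithmetic hypothesis ($2^\omega<\tau$ respectively \CH).
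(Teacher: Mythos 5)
First, a point of comparison: the paper does not prove this Fact at all --- both parts are quoted from Jensen (Lemmas 6.2 and 6.3 of the Singapore notes), whose proofs are long Barwise-theory arguments. So your attempt has to be measured against those proofs, and against them it has genuine gaps.

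For part (1), your skeleton is essentially the right one, and your candidate condition is better than you may realize: since $\sigma\rest\omega_1^{\bN}=\id$ and $\sigma$ is continuous at ordinals of $\bN$-cofinality $\omega$, one has $\sigma(\bq)=\sigma\circ\bq$ for each $\bq\in\bG$, so $\sigma\circ\barg=\bigcup\sigma``\bG$ is indeed increasing and continuous, and the only obstruction is exactly the one you name, namely $\mu_\sigma\in A$. The gap is that the step you call ``hull liberation'' is the entire technical content of Jensen's proof, and you merely assert it. Moreover, as literally stated it is false: every realizable value $\mu_{\sigma'}$ is the supremum of a countable set and so has cofinality $\omega$, hence the set of such values cannot ``fill out a club'' in the regular cardinal $\tau$. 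What Jensen actually proves, via Barwise compactness and transitive liftups, is that there is a club $C\sub\tau$ such that every $\lambda\in C$ of cofinality $\omega$ is realizable as $\sup\sigma'{}``\btau$ for a suitable $\sigma'$ (in $\V$) with the same parameter values and the same hull; stationarity of $A\sub S^\tau_\omega$ then finishes. Your explanation of where $2^\omega<\tau$ enters (countable hulls being ``cofinal in $[\tau]^\omega$'') is not the actual use --- that property of hulls holds regardless of the size of the continuum; the hypothesis is needed inside the realizability argument itself.

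Part (2) is not a repairable sketch but a wrong approach. The dense open subsets of $\bP$ lying in $\bN$ are already met by the given filter $\bG$ --- nothing needs to be fused to meet them, and a generic over $\bN$ is not what is being sought. What the master condition $p$ must do is force that the Namba-generic branch $s$ through $p$ (a cofinal $\omega$-sequence in $\omega_2$) equals $\sigma'\circ\bars$ for some elementary $\sigma':\bN\prec N$ agreeing with $\sigma$ on the prescribed parameters and generating the same hull, where $\bars$ is the branch determined by $\bG$; only then does $\sigma'{}``\bG\sub G$ follow. A fusion that thins trees to meet countably many dense sets gives no reason whatsoever that any branch of $\bigcap_n T_n$ is realizable as such an image: realizability is a property of each individual branch of $p$, not a density property, and this is precisely why Jensen's proof must attach consistent infinitary theories (describing partial versions of $\sigma'$) to the nodes of the tree being built, with \CH used to guarantee that at each node fewer than $\omega_2$ of the candidate successor ordinals must be discarded. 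Your stated uses of \CH are also off: there are only countably many antichains in $\bN$, since $\bN$ is countable, so \CH is not needed to ``bound the number of antichains to be met in $\bN$ by $\omega_1$.''
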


The forcing $\P_A$ adds a subset of $A$, club in $\tau$, of order type $\omega_1$, and $\naturals$ collapses the cofinality of $\omega_2^\V$ to $\omega$, while preserving $\omega_1$ as a cardinal.

\begin{obs}
\label{obs:Impossibility}
Assume $\V=L$, and let $\kappa$ be a cardinal. There is no forcing notion $\P$ that's subcomplete above $\kappa$, such that if $G$ is $\P$-generic over $L$, then $\kappa=\omega_2^{L[G]}$ and $L[G]\models\BFA_{\SCabove{\omega_2}}(\omega_1,\omega_2)$.
\end{obs}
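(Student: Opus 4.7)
Suppose toward a contradiction that $\P\in L$ is subcomplete above $\kappa$ and that $G$ is $\P$-generic over $L$ with $\kappa=\omega_2^{L[G]}$ and $L[G]\models\BFA_{\SCabove{\omega_2}}(\omega_1,\omega_2)$. The plan is to derive a contradiction with $\wATP{\omega_1,\omega_2}{\SCabove{\omega_2}}$ in $L[G]$, which is equivalent to the bounded forcing axiom by Lemma~\ref{lem:CharacterizationOfATP(omega1,mu)ForSCabovemu} applied with $\lambda=\kappa$, as soon as I verify $\kappa^\omega=\kappa$ in $L[G]$.

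First I would extract the structural consequences. By Observation~\ref{obs:SubcompleteAboveMuAddsNoOmegaSequenceInMu}, $\P$ is $[\kappa]^\omega$-preserving and in particular adds no reals, so $L[G]\models\CH$ with $2^\omega=\omega_1<\kappa$; this also gives $\kappa^\omega=\kappa$ in $L[G]$, so the hypothesis of Lemma~\ref{lem:CharacterizationOfATP(omega1,mu)ForSCabovemu} is met. Next, since every $\sigma$-closed forcing is in $\SCabove{\omega_2}$, the assumption implies $\BFA_{\CC}(\omega_1,\omega_2)$ in $L[G]$, and the proof of Fact~\ref{fact:BFAatOmega2andUnfoldability}(1) (which the paper notes carries through to $\SCabove{\omega_2}$) yields that $\kappa=\omega_2^{L[G]}$ is unfoldable, hence inaccessible, in $L$. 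In particular $\omega_2^L<\kappa$, and then combining $[\kappa]^\omega$-preservation with preservation of $\omega_1$ forces every $L$-cardinal $\mu$ with $\omega_1<\mu<\kappa$ to be collapsed to $L[G]$-size $\omega_1$ while retaining uncountable cofinality; hence $\cf^{L[G]}(\mu)=\omega_1$ throughout that interval, including $\omega_2^L$.

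The heart of the proof is then to exhibit, inside $L[G]$, a forcing notion $\Q$ that is subcomplete above $\omega_2$ together with an $(\omega_1,\omega_2)$-Aronszajn tree $T$ such that $\Q$ forces a cofinal branch through $T$, contradicting $\wATP{\omega_1,\omega_2}{\SCabove{\omega_2}}$. The subcomplete forcings I would draw on are exactly those recalled in the facts immediately preceding this observation: club-shooting forcings $\P_A$ through stationary $A\subseteq S^\tau_\omega$ (subcomplete in $L[G]$ since $2^\omega=\omega_1<\tau$) and Namba forcing (subcomplete in $L[G]$ by CH). The main obstacle will be ensuring that the candidate $\Q$ actually lies in $\SCabove{\omega_2}$: both bare Namba at $\omega_2=\kappa$ and bare club-shooting through $A\subseteq S^\kappa_\omega$ add $\omega$-sequences into $\omega_2$ and thus fail the $[\omega_2]^\omega$-preservation needed for $\SCabove{\omega_2}$. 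The strategy is therefore to place $A$ inside $S^\tau_\omega$ for some $\tau>\kappa$ (for example $\tau=(\kappa^+)^{L[G]}$), so that the generic club from $\P_A$ lies entirely above $\omega_2$ and no new countable subsets of $\omega_2$ are introduced, while $T$ is to be built from the $L$-definable pattern by which $\P$ collapses the interval $[\omega_2^L,\kappa)$, arranged so that the generic club encodes a cofinal branch of $T$. The delicate verifications are that the resulting $\Q$ is genuinely subcomplete above $\omega_2$ (rather than just $[\omega_2]^\omega$-preserving) and that $T$ is $L[G]$-Aronszajn; these are the steps I expect to require the most care.
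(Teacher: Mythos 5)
Your opening reductions are fine, but the proof as written has a genuine gap: the entire core of the argument is deferred rather than carried out. You never construct the tree $T$, never exhibit a mechanism by which a generic club contained in $(\kappa,\tau)$, $\tau=(\kappa^+)^{L[G]}$, could code a cofinal branch through a tree of width only $\kappa$ (the natural tree of countable approximations to such a club has levels of size $\tau^\omega>\kappa$, so it is too wide), and never verify that your $\Q$ is subcomplete above $\omega_2$ --- you flag these as ``delicate verifications,'' but they are not verifications of a given construction; they \emph{are} the construction. There is also a structural reason to doubt the plan can be completed as sketched: $\BFA_{\SCabove{\omega_2}}(\omega_1,\omega_2)+\CH$ is consistent (it follows from $\BSCFA(\omega_1,\omega_2)+\CH$, forceable from a strongly unfoldable cardinal, since $\SCabove{\omega_2}\sub\SC$), and in any such model $\wATP{\omega_1,\omega_2}{\SCabove{\omega_2}}$ holds, so no tree/forcing pair of the kind you want exists there. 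Hence any successful construction must make essential use of the hypothesis that $\P$ itself is subcomplete above $\kappa$ over $L$; your appeal to ``the $L$-definable pattern by which $\P$ collapses $[\omega_2^L,\kappa)$'' gestures at this but supplies no mechanism. (A smaller issue: $\BFA_{\CC}(\omega_1,\omega_2)$ alone is not known to yield unfoldability --- the forcings in that argument are countably distributive subcomplete, not $\sigma$-closed --- and your claim that \emph{every} $L$-cardinal in $(\omega_1,\kappa)$ retains uncountable cofinality fails for $L$-singulars of $L$-cofinality $\omega$; but none of this is needed anyway.)

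What your strategy misses is the trick that makes the paper's proof work: a forcing in $\SCabove{\omega_2}$ need not leave $\kappa$ undisturbed; it only needs to be subcomplete above $\kappa$, and this becomes \emph{free} after a preliminary $\sigma$-closed collapse. The paper forces over $L[G]$ with $\Col{\omega_1,\omega_2}*\dot{\naturals}*\Col{\omega_1,\nu}$ followed by a club-shooting $\P_A$: once $\Col{\omega_1,\omega_2}$ gives $\kappa$ cardinality $\omega_1$, Observation \ref{obs:MeaningOfSCaboveAnOrdinal} makes every subcomplete forcing of the intermediate model (Namba forcing, which singularizes an $L$-regular cardinal; a further collapse; club-shooting through a stationary set of $L$-singular ordinals on which Jensen's global square sequence has constant order type) automatically subcomplete above $\kappa$, and Theorem \ref{thm:IteratingSubcompletenessAboveMu} keeps the whole composition in $\SCabove{\kappa}$ over $L[G]$. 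Since the global square sequence is $\Sigma_1$-definable in $L$, the final extension satisfies a $\Sigma_1$ statement $\Phi(\omega_1)$ asserting the existence of an $\alpha$ carrying a club of order type $\omega_1$ of points of constant square-type (which, by coherence, forces $\alpha$ to be regular in $L$) together with a $\gamma$ that $L_\alpha$ sees as an uncountable regular cardinal but which has countable cofinality. The bounded forcing axiom, in its $\Sigma_1$-absoluteness form (Fact \ref{fact:CharacterizationOfBFAkappa}, Theorem \ref{thm:EquivalenceBtwBFAandSigma1-1-absoluteness}), pulls $\Phi(\omega_1)$ down to $L[G]$ with witnesses in $(H_{\omega_2})^{L[G]}$, producing an $L$-regular $\gamma<\kappa$ of countable cofinality in $L[G]$ --- contradicting the $[\kappa]^\omega$-preservation of $\P$ over $L$ given by Observation \ref{obs:SubcompleteAboveMuAddsNoOmegaSequenceInMu}. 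So the paper contradicts the axiom directly in its absoluteness formulation, rather than by building an Aronszajn tree, and the hypothesis on $\P$ enters exactly at the last step.
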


\begin{proof}
Assume $\P$ were such a forcing notion. I will use arguments of \cite{Fuchs:HierarchiesOfForcingAxioms} to derive a contradiction. Let $G$ be generic for $\P$. Over $L[G]$, let $H_0$ be generic for $\Col{\omega_1,\omega_2}$. Let $\B_0$ be the regular open algebra of $\Col{\omega_1,\omega_2}$, and let $\nu$ be the cardinality of this algebra.
In $L[G][H_0]$, consider Namba forcing, $\naturals$, followed by the collapse of $\nu$ to $\omega_1$. $\CH$ holds in $L[G][H_0]$, so Namba forcing is subcomplete there, and hence, so is the composition of Namba forcing with the collapse of $\nu$ to $\omega_1$. This means that it is subcomplete above $\kappa$ there (since $\kappa$ has size $\omega_1$). It follows by Theorem \ref{thm:IteratingSubcompletenessAboveMu} that $\Q=\Col{\omega_1,\omega_2}*\dot{\naturals}*\Col{\omega_1,\nu}$ is subcomplete above $\omega_2$ in $L[G]$. To see this, 
let $\mu_0=\kappa$, and let be $\B_1$ a complete Boolean algebra such that $\B_1/\dot{G}_{\B_0}$ is equivalent to a $\B_0$-name for Namba forcing followed by the collapse of $\nu$ to $\omega_1$. The conditions of Theorem \ref{thm:IteratingSubcompletenessAboveMu} are then satisfied.

Let $H_1$ be $\B_1$-generic over $L[G][H_0]$, and let $H=H_0*H_1$.

Let $\theta>\kappa$ be a regular cardinal in $L[G][H]$.
Let $\vec{C}$ be Jensen's canonical global $\square$ sequence of $L$, see \cite{FS}. Thus $\vec{C}=\seq{C_\alpha}{\alpha\ \text{is singular in $L$}}$ is $\Sigma_1$-definable in $L$ and has the property that for every $L$-singular ordinal $\alpha$, $C_\alpha$ is club in $\alpha$, has order type less than $\alpha$, and satisfies the coherence property that if $\beta<\alpha$ is a limit point of $C_\alpha$, then $\beta$ is singular in $L$ and $C_\beta=C_\alpha\cap\beta$.

Let $B=\{\xi<\theta\st\kappa<\xi<\theta\ \text{and}\ \cf(\theta)=\omega\}$. By covering, every $\xi\in B$ is singular in $L$, and hence, $C_\xi$ is defined, for every such $\xi$. Since the map $\xi\mapsto\otp(C_\xi)$ is regressive, there is a stationary $A\sub B$ on which it is constant, say with value $\beta_0$. The forcing $\P_A$ which shoots a club set of order type $\omega_1$ through $A$ is subcomplete in $L[G][H]$, and since the cardinality of $\kappa$ is $\omega_1=\omega_1^L$ in $L[G][H]$, this means that $\P_A$ is subcomplete above $\kappa$. Hence, the entire composition $\B_0*\B_1*\dot{\P_A}$ is subcomplete above $\kappa$ in $L[G]$.

Let $I$ be generic over $L[G][H]$ for $\P_A$. Then in $L[G][H][I]$, the following statement $\Phi(\omega_1)$ holds:

``there are ordinals $\alpha$, $\beta$ and $\gamma$, and a set $C$ of order type $\omega_1$, club in $\alpha$, such that for all $\xi\in C$, $C_\xi$ is defined and $\otp(C_\xi)=\beta$, $\gamma$ is an uncountable regular cardinal in $L_\alpha$, and $\gamma$ has countable cofinality.''

Since $\vec{C}$ is $\Sigma_1$-definable in $L$ (and hence in $\V$), this statement can be expressed in a $\Sigma_1$ way, using $\omega_1=\omega_1^{L[G]}$ as a parameter. Moreover, the statement holds in $L[G][H][I]$, as witnessed by $\alpha=\theta$, $\beta=\beta_0$, $\gamma=\kappa$ (which is $\omega_2^{L[G]}$), and $C$ being the club added by $I$. Since $H*I$ is generic for a forcing that's subcomplete above $\omega_2$ in $L[G]$, and since we assumed that $\BFA_{\SCabove{\omega_2}}(\omega_1,\omega_2)$ holds in $L[G]$, it follows that $\Phi(\omega_1)$ also holds in $L[G]$. Let $\alpha,\beta,\gamma,C$ witness this. Since the only parameter used in $\Phi(\omega_1)$ is $\omega_1^{L[G]}=\omega_1^L$, these witnesses may be chosen in $(H_{\omega_2})^{L[G]}$.

It now follows that $\alpha$ is a regular cardinal in $L$, since $C_\alpha$ must be undefined - otherwise, we'd have that $\otp(C_\alpha)=\omega_1$ (because $C$ is club in $\alpha$ and has order type $\omega_1$), and so there would be $\xi<\zeta$, both in $C\cap C'_\alpha$ ($C'_\alpha$ being the set of limit points less than $\alpha$ of $C_\alpha$), which would have to satisfy $C_\xi=C_\alpha\cap\xi$ and $C_\zeta=C_\alpha\cap\zeta$ by coherency, and so, $C_\xi$ would have to be a proper initial segment of $C_\zeta$, yet $\otp(C_\xi)=\otp(C_\zeta)=\beta$, a contradiction. But since $\alpha$ is a cardinal in $L$ and $L_\alpha$ believes that $\gamma$ is an uncountable regular cardinal, it follows that $\gamma$ is actually an uncountable regular cardinal in $L$. Since $\gamma$ has countable cofinality in $L[G]$, this means that $G$ added cofinal subset of $\gamma$ of order type $\omega$. But $\gamma<\omega_2^{L[G]}=\kappa$, and $G$ is generic for a forcing that's subcomplete above $\kappa$, so this is impossible.
\end{proof}

\section{Aronszajn tree preservation by subcomplete forcing}
\label{sec:ATPbySCforcing}

In the next three subsections, I will systematically analyze the property $\wATP{\omega_1,\lambda}{\SC}$ in terms of consistency strength, consequences and relationships to bounded forcing axioms. It turns out that it makes sense to break down the analysis in three subcases: $\lambda<2^\omega$, $\lambda=2^\omega$ and $\lambda>2^\omega$. The first of these cases has been solved in joint work with Minden.

\subsection{Below the continuum}
\label{subsec:BelowC}

The following preservation property of subcomplete forcing settles the matter.

\begin{thm}[Fuchs \& Minden {\cite[Theorem 4.4]{FuchsMinden:SCforcingTreesGenAbs}}]
\label{thm:SCforcingAddsNoNewBranchGeneral}
Subcomplete forcing cannot add cofinal branches to $(\omega_1,{<}2^\omega)$-trees.
\end{thm}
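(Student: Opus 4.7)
My plan is to argue by contradiction: suppose $\P$ is subcomplete, $T$ is an $(\omega_1,{<}2^\omega)$-Aronszajn tree, and $p_0\in\P$ forces that $\dot{b}$ is a cofinal branch of $T$ not in the ground model; the goal is to manufacture $2^\omega$ pairwise distinct nodes at a single countable level of $T$, contradicting the width restriction on that level. By reindexing each level by ordinals below $2^\omega$, I may assume $T\sub\omega_1\times 2^\omega$. The key preliminary is Observation \ref{obs:SCaboveContinuumIsFree}, which says that $\P$, being subcomplete, is automatically subcomplete above $2^\omega$; in the forthcoming setup, with a countable elementary submodel $\bN$ and an embedding $\sigma:\bN\prec L_\tau^A$, this stronger clause guarantees that any witnessing embedding $\sigma'$ agrees pointwise with $\sigma$ on $\bmu:=\sigma^{-1}(2^\omega)$, and hence on the nodes of $\bT$ at all levels below $\delta:=\omega_1^{\bN}$ (since such nodes sit in $\delta\times\bmu$).

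I pick a countable full transitive $\bN$ together with $\sigma:\bN\prec L_\tau^A$ for suitably large $\theta$ with $H_\theta\sub L_\tau^A$, having $T,\dot{b},\P,p_0,2^\omega$ in its range, and let $\bT,\dot{\barb},\bP,\bp_0,\bmu$ denote the corresponding preimages. Working inside $\bN$, I build a perfect splitting system $\kla{p_s\st s\in{}^{{<}\omega}2}\sub\bP$ below $\bp_0$: since $\dot{\barb}$ is forced in $\bN$ to name a new cofinal branch of $\bT$, below every $p_s$ there must be two incompatible extensions $p_{s\verl 0},p_{s\verl 1}$ forcing $\dot{\barb}$ to take distinct values at some common level below $\delta$, and a routine bookkeeping argument inside the countable model $\bN$ arranges that for each $x\in{}^\omega 2$ the filter $\bG_x$ generated by $\kla{p_{x\restrict n}\st n<\omega}$ is $\bP$-generic over $\bN$. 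The resulting branches $\barb_x:=\dot{\barb}^{\bG_x}$ are then $2^\omega$ pairwise distinct cofinal branches of $\bT$, all lying in $\V$.

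For each $x\in{}^\omega 2$ I apply subcompleteness above $2^\omega$ (with $T,\dot{b},\P,p_0$ as the specified parameters) to $\sigma$ and $\bG_x$ to obtain a condition $q_x\in\P$ such that every $\P$-generic $G\ni q_x$ yields in $\V[G]$ a lift $\sigma'_x:\bN\prec L_\tau^A$ with $\sigma'_x\restrict\bmu=\sigma\restrict\bmu$ and $(\sigma'_x)``\bG_x\sub G$. Because $\sigma'_x$ agrees with $\sigma$ pointwise on the nodes of $\bT$ at levels below $\delta$, the set $c_x:=\sigma``\barb_x$ is a branch of $T$ of height $\delta$ that lies in $\V$ and is independent of which witness $\sigma'_x$ one picks. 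The canonical lift of $\sigma'_x$ to the generic extension sends $\barb_x$ pointwise into $\dot{b}^G$, so $\dot{b}^G(\delta)$ is a node of $T$ at level $\delta$ extending $c_x$; this node belongs to the ground-model tree $T$, hence to $\V$, and I call it $n_x$. For distinct $x,y\in{}^\omega 2$, $\barb_x\neq\barb_y$ together with injectivity of $\sigma$ gives $c_x\neq c_y$, and the tree property forces $n_x\neq n_y$. This exhibits $2^\omega$ distinct nodes at the countable level $\delta$ of $T$ inside $\V$, contradicting $|T_\delta|<2^\omega$.

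The step I expect to be the main obstacle is the perfect-splitting construction: turning the assumption that $\dot{\barb}$ is forced to be new into genuine splitting conditions at every node of ${}^{{<}\omega}2$ rests on the standard lemma that if some condition below $\bp_0$ admitted no pair of extensions deciding $\dot{\barb}$ differently at any common level, then $\dot{\barb}$ would already be named by an object of $\bN$, contradicting newness. Once the tree of conditions and the branching generic filters $\bG_x$ are in hand, the invocation of subcompleteness above $2^\omega$ to extract the contradicting nodes is mechanical.
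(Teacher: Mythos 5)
Your proposal is correct, and it is essentially the argument behind the cited Fuchs--Minden theorem (this paper itself only cites \cite[Theorem 4.4]{FuchsMinden:SCforcingTreesGenAbs} rather than reproving it): a perfect splitting tree of conditions below $\bp_0$ yielding $2^\omega$ pairwise distinct $\bN$-generic filters, followed by one application of subcompleteness per filter, with the agreement $\sigma'_x\restrict\bmu=\sigma\restrict\bmu$ (your route via Observation \ref{obs:SCaboveContinuumIsFree} is just a repackaging of the original's trick of coding ordinals below the continuum by reals, which elementary embeddings of transitive models fix) pinning down $2^\omega$ distinct nodes at level $\delta=\omega_1^{\bN}$ of $T$. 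The only blemishes are cosmetic: the word ``Aronszajn'' in your setup is unnecessary (and slightly misleading, since the theorem concerns new branches in arbitrary such trees), the dense-set bookkeeping happens in $\V$ using the countability of $\bN$ rather than ``inside'' $\bN$, and the choice of $n_x$ in $\V$ should be justified by noting that ``some node of $T_\delta$ extends all of $c_x$'' is a $\Delta_0$ statement about ground-model sets and hence absolute.
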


In particular, $\wATP{\omega_1,{<}2^\omega}{\SC}$ holds always.

\subsection{At the continuum}
\label{subsec:AtC}

Writing $\BSCFA$ for $\BFA_{\SC}$, the next lemma follows directly from Corollary \ref{cor:CharacterizationOfATP(omega1,continuum)}.

\begin{lem}
\label{lem:BSCFAatContinuum}
The following are equivalent:
\begin{enumerate}[label=(\arabic*)]
\item
\label{item:BFAatContinuum}
$\BSCFA(\omega_1,2^\omega)$,
\item
\label{item:Sigma1-1AbsolutenessAtContinuum}
$\Sigma^1_1(\omega_1,2^\omega)$-absoluteness for subcomplete forcing,
\item
\label{item:ATPatContinuum}
$\wATP{\omega_1,2^\omega}{\SC}$.
\end{enumerate}
\end{lem}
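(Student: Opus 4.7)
The plan is to derive the lemma as a direct consequence of Corollary \ref{cor:CharacterizationOfATP(omega1,continuum)} together with the standard fact, due to Jensen, that subcomplete forcing does not add reals. First I would observe that each of the three principles in the statement, when formulated for the entire forcing class $\SC$, is just the universal quantification over $\P \in \SC$ of the corresponding singleton principle $\BFA_{\{\P\}}(\omega_1, 2^\omega)$, $\Sigma^1_1(\omega_1,2^\omega)$-absoluteness for $\{\P\}$, and $\wATP{\omega_1,2^\omega}{\{\P\}}$. This is immediate from the definitions in Section \ref{sec:TheGeneralSetup} (for the bounded forcing axiom and $\Sigma^1_1$-absoluteness) and from the fact that $\SC$ is natural (established in \cite{Fuchs:ParametricSubcompleteness}), which, as noted after Definition \ref{defn:ForcingEquivalenceAndNaturalness}, ensures that the strong and weak forms of Aronszajn tree preservation agree for subcomplete forcing.

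The next step is to quote Jensen's fundamental theorem that no subcomplete forcing notion $\P$ adds a real. Given this, for any fixed $\P \in \SC$ the hypothesis of Corollary \ref{cor:CharacterizationOfATP(omega1,continuum)} is satisfied, so the three singleton principles listed there are pairwise equivalent. Universally quantifying over $\P \in \SC$ then yields the pairwise equivalence of \ref{item:BFAatContinuum}, \ref{item:Sigma1-1AbsolutenessAtContinuum}, and \ref{item:ATPatContinuum}.

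There is no genuine obstacle here; the only subtlety worth flagging is that the definition of $\Sigma^1_1(\omega_1,\lambda)$-absoluteness for a class $\Gamma$ in Definition \ref{defn:TwoCardinalSigma1-1Absoluteness} and the definition of $\BFA_\Gamma$ in Definition \ref{def:BoundedForcingAxioms} are stated for the whole class simultaneously, so one should briefly confirm that they really do reduce to the conjunction of the singleton versions. Both do, since the witness $\bM$ (respectively the generic filter) required by the principle for $\Gamma$ depends only on the individual $\P \in \Gamma$ being considered. With that in hand, the lemma is immediate.
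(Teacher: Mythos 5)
Your proposal is correct and matches the paper's own proof, which simply derives the lemma directly from Corollary \ref{cor:CharacterizationOfATP(omega1,continuum)} applied to each subcomplete $\P$, using Jensen's theorem that subcomplete forcing adds no reals; your extra remarks about reducing the class-level principles to their singleton versions (and about naturality of $\SC$) just make explicit what the paper leaves implicit.
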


This generalizes the result \cite[Lemma 4.21]{FuchsMinden:SCforcingTreesGenAbs}, obtained jointly with Kaethe Minden, which was shown under the assumption that $2^\omega=\omega_1$.

\begin{defn}
\label{defn:SFP}
Let $\tau$ be a cardinal of uncountable cofinality. The \emph{strong Friedman property at $\tau$,} $\SFP{\tau}$, says that if $\seq{D_i}{i<\omega_1}$ is a partition of $\omega_1$ into stationary sets and $\seq{A_i}{i<\omega_1}$ is a sequence of stationary subsets of $S^{\tau}_\omega=\{\alpha<\tau\st\cf(\alpha)=\omega\}$, then there is a strictly increasing, continuous function $f:\omega_1\To\omega_2$ such that for every $i<\omega_1$, $f``D_i\sub A_i$.
\end{defn}

\begin{fact}
\label{fact:SFPandArithmetic}
Let $\tau>\omega_1$ be a regular cardinal such that $\SFP{\tau}$ holds. Then $\tau^{\omega_1}=\tau$.
\end{fact}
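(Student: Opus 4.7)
The inequality $\tau\le\tau^{\omega_1}$ is immediate, so the content is in showing $\tau^{\omega_1}\le\tau$. The plan is to use $\SFP{\tau}$ to inject ${}^{\omega_1}\tau$ into the collection of clubs of order type $\omega_1$ sitting inside bounded initial segments of $\tau$, and then count the latter using the regularity of $\tau$ together with cardinal arithmetic below $\tau$.

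For the injection, fix once and for all a partition $\<D_i\st i<\omega_1\>$ of $\omega_1$ into stationary sets and, via Solovay's splitting theorem, a partition $\<B_\alpha\st\alpha<\tau\>$ of $S^\tau_\omega$ into pairwise disjoint stationary sets. Given $h\in{}^{\omega_1}\tau$, applying $\SFP{\tau}$ to $\<D_i\st i<\omega_1\>$ and $\<B_{h(i)}\st i<\omega_1\>$ produces a strictly increasing continuous $f_h:\omega_1\to\tau$ with $f_h[D_i]\subseteq B_{h(i)}$ for every $i$. The disjointness of the $B_\alpha$'s makes $h\mapsto f_h$ injective: if $h(i_0)\ne h'(i_0)$, then $f_h[D_{i_0}]$ and $f_{h'}[D_{i_0}]$ lie in disjoint sets. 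Since $\cf(\tau)=\tau>\omega_1$, the range of $f_h$ is a club of order type $\omega_1$ in some $\gamma_h<\tau$, and $f_h$ is recovered from its range; hence
\[\tau^{\omega_1}\le\sum_{\gamma<\tau}|\{C\subseteq\gamma\st C\ \text{is club of order type}\ \omega_1\}|\le\sum_{\gamma<\tau}|\gamma|^{\omega_1}.\]

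The main obstacle is to extract from $\SFP{\tau}$ the cardinal-arithmetic bound $|\gamma|^{\omega_1}\le\tau$ for every $\gamma<\tau$; granted this, regularity of $\tau$ collapses the displayed sum to $\tau\cdot\tau=\tau$, finishing the proof. The plan is to reduce this to the base case $2^{\omega_1}\le\tau$: the successor step is Hausdorff's formula $(\lambda^+)^{\omega_1}=\lambda^+\cdot\lambda^{\omega_1}$, and limit cases are handled by a standard cofinality computation, both reducing to the inductive hypothesis and the assumed bound $\tau$. The genuinely hard step is the base case $2^{\omega_1}\le\tau$: the injection above applied to $2$-valued $h$'s yields only the trivial $2^{\omega_1}\le\tau^{\omega_1}$, so a sharper use of $\SFP{\tau}$ is needed, plausibly one that canonicalizes the choice of $f_h$ so that only $\tau$-many clubs of order type $\omega_1$ are realized—for instance, by selecting each $f_h$ from an elementary substructure of $H_{\tau^+}$ of size $\tau$ containing the parameters $\<D_i\>$ and $\<B_\alpha\>$. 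This refinement is where I expect the essential content of the proof to lie.
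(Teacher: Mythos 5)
Your opening move (the injection $h\mapsto\ran(f_h)$ via disjoint Solovay pieces $B_{h(i)}$) is the right one, but the proof stalls exactly where you say it does, and the repair you gesture at cannot work as stated. Choosing the witnesses $f_h$ inside an elementary substructure $N\prec H_{\tau^+}$ of size $\tau$ fails for a circularity reason: \SFP{\tau} only asserts the \emph{existence} of a witness for each $h$, and elementarity produces a witness inside $N$ only for those $h$ that are themselves elements of $N$; since $|N|=\tau$ and the number of functions $h$ is $\tau^{\omega_1}$ --- the very quantity being bounded --- you cannot arrange that all $h$ (or witnesses for them) lie in $N$ without assuming the conclusion. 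Likewise the counting bound $\sum_{\gamma<\tau}|\gamma|^{\omega_1}$ is circular: already for $\gamma=\omega_1$ it is at least $2^{\omega_1}$, and $2^{\omega_1}\le\tau$ is itself a consequence of the fact being proved (as you note, it is the irreducible base case of your Hausdorff induction), not something available in advance. So the proposal has a genuine gap, and it is not a small one: the missing step is the entire content of the theorem.

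The missing idea in the proof the paper cites (Foreman--Magidor--Shelah; Jech, proof of Theorem 37.13) is that $h$ can be recovered from the \emph{single ordinal} $\gamma_h=\sup(\ran(f_h))$, not from the club $\ran(f_h)$, so that no counting of clubs is needed at all. Fix in advance, for each $\gamma<\tau$ of cofinality $\omega_1$, one increasing continuous cofinal map $g_\gamma:\omega_1\To\gamma$. Any two increasing continuous cofinal maps from $\omega_1$ into the same $\gamma$ agree on a club in $\omega_1$ (the agreement set is closed by continuity and unbounded by a leapfrog argument), so $f_h$ and $g_{\gamma_h}$ agree on a club $E\sub\omega_1$. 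For each $i<\omega_1$, the set $g_{\gamma_h}[D_i\cap E]=f_h[D_i\cap E]\sub B_{h(i)}$ is stationary in $\gamma_h$, since images of stationary subsets of $\omega_1$ under increasing continuous cofinal maps are stationary; on the other hand, for $\alpha\neq h(i)$ we have $g_{\gamma_h}[D_i]\cap B_\alpha\sub g_{\gamma_h}[D_i\ohne E]$, which is nonstationary in $\gamma_h$ because $D_i\ohne E$ is nonstationary in $\omega_1$. Hence $h(i)$ is the \emph{unique} $\alpha<\tau$ such that $g_{\gamma_h}[D_i]\cap B_\alpha$ is stationary in $\gamma_h$, so $h$ is decoded from $\gamma_h$ alone using only the fixed data $\seq{D_i}{i<\omega_1}$, $\seq{B_\alpha}{\alpha<\tau}$, $\seq{g_\gamma}{\gamma<\tau}$. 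This gives a partial surjection from $\tau$ onto ${}^{\omega_1}\tau$, so $\tau^{\omega_1}\le\tau$ outright, with no cardinal arithmetic below $\tau$ required. Your instinct that the witnesses must be canonicalized was correct; the actual canonicalization is to replace $f_h$ by the pre-chosen $g_{\gamma_h}$, which agrees with it modulo a club, rather than to locate $f_h$ in a small substructure.
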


\begin{proof}
This is due to Foreman-Magidor-Shelah \cite{FMS:MM1}, see Jech \cite[p.~686, proof of Theorem 37.13]{ST3}.
\end{proof}

\begin{obs}
\label{obs:ATPimpliesSFP}
Let $\tau>2^\omega$ be a regular cardinal with $\tau^\omega=\tau$. Then $\wATP{\omega_1,\tau}{\SC}$ implies $\SFP{\tau}$. In fact, it suffices to have $\wATP{\omega_1,\tau}{\Gamma}$, where $\Gamma$ denotes the class of all subcomplete forcing notions that are also countably distributive.
\end{obs}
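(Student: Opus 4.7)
The plan is to argue contrapositively: assume $\wATP{\omega_1,\tau}{\Gamma}$, and given a partition $\seq{D_i}{i<\omega_1}$ of $\omega_1$ into stationary sets together with a sequence $\seq{A_i}{i<\omega_1}$ of stationary subsets of $S^\tau_\omega$, produce the required strictly increasing continuous $f\colon \omega_1 \to \tau$ with $f``D_i \sub A_i$. Writing $i(\beta)$ for the unique index such that $\beta \in D_{i(\beta)}$, I would consider the tree $T$ whose level-$\alpha$ nodes are the continuous, strictly increasing functions $p\colon \alpha+1 \to \tau$ satisfying $p(\beta) \in A_{i(\beta)}$ for every $\beta \le \alpha$, ordered by end-extension. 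The width of each level is at most $\tau^{|\alpha|+1} \le \tau^\omega = \tau$.

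A direct induction should show that $T$ has height $\omega_1$, and more generally that any node can be extended to any prescribed countable height. The successor step is immediate from the unboundedness of the $A_i$. At a countable limit $\alpha$ with chosen cofinal sequence $\bar\alpha_n \nearrow \alpha$, each $E_\beta = \{\delta < \tau \st A_{i(\beta)}\ \text{is cofinal in}\ \delta\}$ is club in $\tau$, hence so is $\bigcap_n E_{\bar\alpha_n}$, since $\cf(\tau) > \omega$; by stationarity of $A_{i(\alpha)}$ one can pick $\delta \in A_{i(\alpha)}$ above any prescribed bound in this intersection and then build $p$ recursively so that $p(\alpha)=\delta$.

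Next, let $\P$ be the forcing whose underlying set is $T$, with stronger meaning longer. The main technical step, and the one I expect to be the principal obstacle, is verifying that $\P \in \Gamma$, i.e.\ that $\P$ is both subcomplete and countably distributive. I would derive this by a direct adaptation of Jensen's argument that the single-stationary-set club-shooting forcing $\P_A$ is subcomplete (\cite[Lemma~6.3]{Jensen2014:SubcompleteAndLForcingSingapore}), whose hypothesis $2^\omega < \tau$ is precisely the one we assume. Given a countable full $\bN$ and $\sigma\colon \bN \prec N$ with $N$ of the form $L_\tau^A$ and with all relevant parameters in $\ran(\sigma)$, set $X=\ran(\sigma)$, $\alpha_X = X \cap \omega_1$ and $\delta_X = \sup(X \cap \tau)$; on a stationary class of such $X$ one has $\delta_X \in A_{i(\alpha_X)}$ (by a standard Fodor-style argument using the partition assumption). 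By elementarity each $A_{i(\beta)}$ for $\beta \in X \cap \omega_1$ is cofinal in $\delta_X$, so one can build an $\bN$-generic descending chain of conditions inside $\sigma^{-1}(\P)$ whose images have top values cofinal in $\delta_X$, then cap the chain by a single new top level at height $\alpha_X$ taking value $\delta_X$. That the resulting condition is $\bN$-generic yields countable distributivity; after carrying along the usual extra data to reconstruct the embedding $\sigma'$ as in Definition~\ref{defn:SubcompleteAboveMu}, it also yields subcompleteness.

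To close the argument, suppose toward a contradiction that no $f$ as required exists in $V$. Then $T$ has no cofinal branch in $V$, and so $T$ is an $(\omega_1, {\le}\tau)$-Aronszajn tree. Yet for any $\P$-generic $G$ the union $f_G = \bigcup G$ is a total function $\omega_1 \to \tau$ (by density of reaching every countable height, together with preservation of $\omega_1$ coming from countable distributivity), and thus is a cofinal branch through $T$ in $V[G]$. Hence $\P$ witnesses the failure of $\wATP{\omega_1,\tau}{\{\P\}}$, contradicting the assumption; so $\SFP{\tau}$ must hold.
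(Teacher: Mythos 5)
Your proposal follows the same route as the paper's proof: the identical tree $T$ of increasing continuous functions respecting the partition, the identical node-extension forcing whose generic adds a cofinal branch, and the same final contradiction with \wATP{\omega_1,\tau}{\Gamma}. The difference is that the paper obtains the two substantive ingredients by citation --- the height-$\omega_1$ claim from \cite[proof of Thm.~10, first claim]{FMS:MM1}, and the fact that a branch can be added by a subcomplete, countably distributive forcing from \cite[Lemma 6.3 and Lemma 7.1]{Jensen2014:SubcompleteAndLForcingSingapore} --- whereas you attempt to prove both, and your proof of the first one has a genuine gap.

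The gap is in the limit step of your height induction. Membership of $\delta$ in $A_{i(\alpha)}\cap\bigcap_n E_{\bar{\alpha}_n}$, where $E_\beta$ is the club of $\delta$ in which $A_{i(\beta)}$ is cofinal, does not let you ``build $p$ recursively so that $p(\alpha)=\delta$.'' To cap at $\delta$ you need an increasing chain of actual tree nodes $p_n$ at levels $\bar{\alpha}_n$, each end-extending the previous one, with top values cofinal in $\delta$. The top value of $p_{n+1}$ must be \emph{realizable} as the top of a node extending $p_n$, and when $\bar{\alpha}_n$ is itself a limit ordinal this is a nontrivial constraint: by continuity, the value of a node at a limit level is the supremum of its earlier values and must simultaneously land in the prescribed stationary set, so not every element of $A_{i(\bar{\alpha}_n)}$ above $\mathrm{top}(p_n)$ and below $\delta$ is the top of any such node. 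Your clubs only see the raw sets $A_{i(\beta)}$, not the sets of realizable top values, so the recursion gets stuck at exactly the intermediate limit levels --- which is the entire difficulty of the claim, and the reason the paper quotes Foreman--Magidor--Shelah rather than giving a two-line argument. The step can be repaired, either by strengthening the induction hypothesis to: ``for each level $\gamma$ there is a club $C_\gamma$, defined by recursion on $\gamma$ (at limit $\gamma$ with $\gamma_n\nearrow\gamma$, intersect the clubs of accumulation points of the sets $A_{i(\gamma_n)}\cap C_{\gamma_n}$, not of the $A_{i(\gamma_n)}$ themselves), such that every node with top value below $\delta$ extends to level $\gamma$ with top value exactly $\delta$, for every $\delta\in A_{i(\gamma)}\cap C_\gamma$,'' or by running the elementary-submodel argument of \cite{FMS:MM1}. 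A smaller caveat on your second ingredient: in the subcompleteness verification, $\sigma:\bN\prec N$ is \emph{given}, not chosen, so the stationarity of the set of good hulls $X$ (those with $\delta_X\in A_{i(\alpha_X)}$) must be used to manufacture a moved embedding $\sigma'$ whose range is good, as required by Definition \ref{defn:SubcompleteAboveMu}; one cannot assume $\ran(\sigma)$ itself is good. Carried out in full, this adaptation of Jensen's Lemma 6.3 is essentially his Lemma 7.1, which the paper cites directly.
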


\begin{proof}
Suppose $\SFP{\tau}$ fails. Let $\vec{D}$, $\vec{A}$ as in Definition \ref{defn:SFP} be a counterexample. Let $T$ be the tree of increasing, continuous functions from some ordinal $\alpha<\omega_1$ to $\tau$ such that for all $i<\omega_1$, $f``D_i\sub A_i$. Then $T$ is a tree of height $\omega_1$ (this follows from \cite[Proof of Thm.~10, first claim, page 17]{FMS:MM1}). If $\alpha<\omega_1$, then the cardinality of the $\alpha$-th level of $T$ is at most $\tau^\alpha=\tau$. Thus, $T$ is an
$(\omega_1,\tau)$-Aronszajn tree, because a cofinal branch through it would amount to a function $g:\omega\To\tau$ that witnesses this particular instance of $\SFP{\tau}$. But a cofinal branch through $T$ can be added by forcing with a subcomplete forcing that's in fact countably distributive, see \cite[pp.~134, Lemma 6.3 and pp.~154, Lemma 7.1]{Jensen2014:SubcompleteAndLForcingSingapore} - here, the assumption that $2^\omega<\tau$, which was omitted by Jensen, seems to be needed. This contradicts $\wATP{\omega_1,\tau}{\Gamma}$.
\end{proof}

%


I would like to say a few words about the consistency strengths of the concepts under consideration. It was shown in \cite{GoldsternShelah:BPFA} that the bounded proper forcing axiom, that is, $\BFA_{\text{proper}}$, or $\BPFA$, is equiconsistent with a reflecting cardinal, a large cardinal concept introduced in that article. It was shown in \cite{Fuchs:HierarchiesOfForcingAxioms} that the consistency strength of $\BSCFA$ can be pinned down in the same way, by a reflecting cardinal.
I mentioned before that Miyamoto \cite{Miyamoto:WeakSegmentsOfPFA} showed that $\BPFA(\omega_2)$ is equiconsistent with the existence of a cardinal $\kappa$ that's $\kappa^+$-reflecting (also known as $+1$=reflecting or strongly unfoldable). I showed in \cite{Fuchs:HierarchiesOfForcingAxioms} that Miyamoto's result extends to subcomplete forcing as well: the consistency strength of $\BSCFA(\omega_2)$ is a $+1$-reflecting cardinal. Originally, the models of $\BSCFA$/$\BSCFA(\omega_2)$ produced also satisfied \CH, but modulo joint work with Corey Switzer, it is easy to see that adding $\neg\CH$ does not increase the consistency strengths. But it is currently an open question whether $\BSCFA(\omega_2)+2^\omega>\omega_2$ is consistent.


\subsection{Above the continuum}
\label{subsec:AboveC}

Let's now look at $\wATP{\omega_1,\lambda}{\SC}$ in the case that $\lambda>2^\omega$. The first possible case is that $\lambda=(2^\omega)^+$. In this case, the consistency strength of $\wATP{\omega_1,\lambda}{\SC}$ can be pinned down as follows.

\begin{lem}
\label{lem:ConsistencyStrengthOfATPatSuccessorOfContinuum}
The consistency strength of the theory $\ZFC+\wATP{\omega_1,(2^\omega)^+}{\SC}$ is a strongly unfoldable cardinal. \end{lem}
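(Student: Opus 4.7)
The plan is to establish equiconsistency by mirroring the known result that $\BSCFA(\omega_1,\omega_2)$ has consistency strength exactly a strongly unfoldable cardinal (Fact \ref{fact:BFAatOmega2andUnfoldability} and its subcomplete analogue in \cite{Fuchs:HierarchiesOfForcingAxioms}), with both directions reducing to this case via the continuum having been collapsed enough that $(2^\omega)^+$ equals $\omega_2$, or conversely via the countably distributive subclass of $\SC$ where the characterization theorem of Section \ref{sec:Connections} applies.

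For the upper bound, I would start from $\V=L$ with a strongly unfoldable cardinal $\kappa$ and perform the standard Miyamoto--Fuchs revised countable support iteration of subcomplete forcings of size less than $\kappa$, producing a generic extension in which $\kappa=\omega_2$, \CH{} is preserved, and $\BSCFA(\omega_1,\omega_2)$ holds. Since \CH{} holds in the extension, $(2^\omega)^+=\omega_2$, so $\BSCFA(\omega_1,\omega_2)$ coincides with $\BFA_{\SC}(\omega_1,(2^\omega)^+)$, and Corollary \ref{cor:ATPcharacterizationTechnical}\ref{item:BFAtoWATP} immediately delivers $\wATP{\omega_1,(2^\omega)^+}{\SC}$.

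For the lower bound, assume $\wATP{\omega_1,\lambda}{\SC}$ with $\lambda=(2^\omega)^+$, and let $\Gamma'$ be the subclass of $\SC$ consisting of countably distributive subcomplete forcings. This class contains all countably closed forcings and, crucially, the club-shooting forcings $\P_A$ for stationary $A\sub S^\lambda_\omega$, which are subcomplete (because $2^\omega<\lambda$) and countably distributive (by construction, since they add a club of order type $\omega_1$ but no new $\omega$-sequences). Every $\P\in\Gamma'$ is trivially $[\lambda]^\omega$-preserving, and $\lambda^\omega=\lambda$ since $\cf(\lambda)>\omega$. Applying Theorem \ref{thm:CharacterizationOfATP(omega1,lambda)} to each $\P\in\Gamma'$ individually, the three conditions of that theorem are equivalent for $\P$, so restricting our hypothesis to $\Gamma'$ yields $\BFA_{\Gamma'}(\omega_1,\lambda)$. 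I would then adapt the Miyamoto--Fuchs lower bound argument from \cite{Fuchs:HierarchiesOfForcingAxioms}, with $\omega_2$ replaced by $\lambda$: the argument encodes a putative failure of $\lambda$-indescribability against the $L$-hierarchy as a $\Sigma_1$ statement forceable by a composition of club-shootings through stationary subsets of $S^\lambda_\omega$ with countably closed collapses (all members of $\Gamma'$), and concludes via $\BFA_{\Gamma'}(\omega_1,\lambda)$ that $\lambda$ is strongly unfoldable in $L$.

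The hard part will be verifying that every forcing used in the Miyamoto--Fuchs lower bound, when transplanted from $\omega_2$ to $\lambda$, can be chosen from $\Gamma'$ — that is, is both subcomplete and countably distributive; the critical ingredient $\P_A$ satisfies both properties under the hypothesis $2^\omega<\lambda$, and the remaining bookkeeping uses only countably closed collapses. A secondary point is that the arithmetic needed at $\lambda$, principally $\lambda^{\omega_1}=\lambda$, comes for free from Observation \ref{obs:ATPimpliesSFP} together with Fact \ref{fact:SFPandArithmetic}, since $\wATP{\omega_1,\lambda}{\SC}$ gives $\SFP{\lambda}$; without this one could not run the coding step that turns first-order reflection failures at $\lambda$ into failures of branch preservation on trees of width $\lambda$.
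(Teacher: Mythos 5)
Your proposal is correct and follows essentially the same route as the paper's proof: for the upper bound, force $\BSCFA(\omega_1,\omega_2)+\CH$ from a strongly unfoldable cardinal and apply Corollary \ref{cor:ATPcharacterizationTechnical}, and for the lower bound, pass to the class $\Gamma$ of countably distributive subcomplete forcings, use Theorem \ref{thm:CharacterizationOfATP(omega1,lambda)} (with $((2^\omega)^+)^\omega=(2^\omega)^+$ and $[\lambda]^\omega$-preservation) to obtain $\BFA_\Gamma(\omega_1,(2^\omega)^+)$, and rerun the argument of \cite[Lemma 3.10]{Fuchs:HierarchiesOfForcingAxioms} with $\omega_2$ replaced by $(2^\omega)^+$, noting that the forcing notions used there are subcomplete and countably distributive. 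The only divergence is your closing claim that $\lambda^{\omega_1}=\lambda$ (obtained via \SFP{\lambda}) is needed for the coding step; the paper gets by with just $((2^\omega)^+)^\omega=(2^\omega)^+$, which holds outright in \ZFC, so that detour is superfluous but harmless.
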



\begin{proof}
From a strongly unfoldable cardinal, one can force to a model of $\BSCFA(\omega_1,\omega_2)+\CH$ (see \cite[Lemma 4.13]{Fuchs:HierarchiesOfForcingAxioms}); the forcing described in the proof forces \CH. By Corollary \ref{cor:ATPcharacterizationTechnical}, this implies $\wATP{\omega_1,\omega_2}{\SC}$. Since \CH holds in the model produced, it satisfies $\wATP{\omega_1,(2^\omega)^+}{\SC}$.

Conversely, assume $\ZFC+\wATP{\omega_1,(2^\omega)^+}{\SC}$.
Clearly, $((2^\omega)^+)^\omega=(2^\omega)^+$.
Let $\Gamma$ be the class of subcomplete forcing notions that are countably distributive. Note that
\wATP{\omega_1,\omega_2}{\SC} implies \wATP{\omega_1,\omega_2}{\Gamma}, and $\Gamma$ is natural (this is because \SC is natural and the class of countably distributive forcing notions is closed under forcing equivalence).
Since every forcing in $\Gamma$ is $[(2^\omega)^+]^\omega$-preserving and $((2^\omega)^+)^\omega=(2^\omega)^+$, Theorem \ref{thm:CharacterizationOfATP(omega1,lambda)} applies, showing that $\BFA_\Gamma(\omega_1,(2^\omega)^+)$ holds. Letting $\kappa=(2^\omega)^+$, this implies that $\kappa$ is strongly unfoldable in $L$, and in fact, the proof of \cite[Lemma 3.10]{Fuchs:HierarchiesOfForcingAxioms} shows this. In that proof, the assumption is that $\BSCFA(\omega_1,\omega_2)$ holds, but one may replace $\omega_2$ with $(2^\omega)^+$ and run the same argument, as the two forcing notions used in the proof are subcomplete and countably distributive. So $\BFA_\Gamma(\omega_1,\kappa)$ is enough to run the proof.
\end{proof}

Let's now consider the principle $\wATP{\omega_1,\lambda}{\SC}$ for $\lambda>(2^\omega)^+$. 
Recall that by Corollary \ref{cor:ATPcharacterizationTechnical}, $\BSCFA(\omega_1,\lambda)$ implies $\wATP{\omega_1,\lambda}{\SC}$ for any cardinal $\lambda\ge\omega_1$. In particular, $\wATP{\omega_1,\lambda}{\SC}$ is consistent, assuming the consistency of the corresponding bounded forcing axiom. It will turn out that the consistency strength goes up considerably if $\lambda>(2^{\omega})^+$, though. In order to be more specific, I will digress briefly, and study the effects of $\wATP{\omega_1,\lambda}{\SC}$ on square principles. This study will be closely connected to phenomena of stationary reflection introduced in Fuchs \cite[Definition 3.2]{Fuchs:DiagonalReflection}, and will provide lower bounds on the consistency strength of Aronszajn tree preservation above the continuum.

\begin{defn}
\label{defn:DiagonalReflection}
Let $\lambda$ be a regular cardinal, let $S\sub\lambda$ be stationary, and let $\kappa<\lambda$. The \emph{diagonal reflection principle} $\DSR{{<}\kappa,S}$ says that whenever $\seq{S_{\alpha,i}}{\alpha<\lambda,i<j_\alpha}$ is a sequence of stationary subsets of $S$, where $j_\alpha<\kappa$ for every $\alpha<\lambda$, then there are a $\gamma<\lambda$ of uncountable cofinality and a club $F\sub\gamma$ such that for every $\alpha\in F$ and every $i<j_\alpha$, $S_{\alpha,i}\cap\gamma$ is stationary in $\gamma$. The version of the principle in which $j_\alpha\le\kappa$ is denoted $\DSR{\kappa,S}$.
\end{defn}

The square principles of interest are of the following kind.

\begin{defn}
\label{defn:WeakThreadedSquares}
Let $\lambda$ be a limit of limit ordinals.
A sequence $\vec{\calC}=\langle\calC_\alpha\st\alpha<\lambda,\ \alpha\ \text{limit}\rangle$ is \emph{coherent} if for every limit $\alpha<\lambda$, $\calC_\alpha\neq\leer$ and for every $C\in\calC_\alpha$, $C$ is club in $\alpha$, and for every limit point $\beta$ of $C$, $C\cap\beta\in\calC_\beta$. A \emph{thread} through $\vec{\calC}$ is a club subset $T$ of $\lambda$ that coheres with $\vec{\calC}$, that is, for every limit point $\beta$ of $T$ with $\beta<\lambda$, it follows that $T\cap\beta\in\calC_\beta$. If every $\calC_\alpha$ has size less than $\kappa$, then $\vec{\calC}$ is said to have \emph{width} ${<}\kappa$. The \emph{length} of $\vec{\calC}$ is $\lambda$.

If $\kappa$ is a cardinal, $\vec{\calC}$ has width ${<}\kappa$, and $\vec{\calC}$ does not have a thread, then $\vec{\calC}$ is called a $\square(\lambda,{<}\kappa)$ sequence. The principle $\square(\lambda,{<}\kappa)$ says that there is a $\square(\lambda,{<}\kappa)$ sequence.

In place of $\square(\lambda,{<}\kappa^+)$, I will usually write $\square(\lambda,\kappa)$.
\end{defn}

For cardinals $\kappa<\lambda$, $S^\lambda_\kappa$ denotes the set of ordinals less than $\lambda$ that have cofinality $\kappa$. The connection between diagonal stationary reflection and these square principles is as follows.

\begin{thm}[{\cite[Theorem 3.4]{Fuchs:DiagonalReflection}}]
\label{thm:DSRimpliesFailureOfSquare}
Let $\lambda$ be regular, $\kappa<\lambda$ a cardinal, and assume that $\DSR{{<}\kappa,S}$ holds, for some stationary $S\sub\lambda$. Then $\square(\lambda,{<}\kappa)$ fails.
\end{thm}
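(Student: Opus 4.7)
The proof goes by contradiction. Assume $\vec{\calC}=\seq{\calC_\alpha}{\alpha<\lambda,\ \alpha\ \text{limit}}$ is a $\square(\lambda,{<}\kappa)$-sequence, yet $\DSR{{<}\kappa,S}$ holds for some stationary $S\sub\lambda$. The plan is to exhibit a double-indexed sequence of stationary sets which, when fed into $\DSR{{<}\kappa,S}$, produces a reflection point that forces enough coherence on $\vec{\calC}$ to build a thread, contradicting the square assumption.

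Setup: apply Solovay's partition theorem to split $S=\bigsqcup_{\xi<\lambda}S_\xi$ into pairwise disjoint stationary pieces. For each limit $\alpha<\lambda$ enumerate $\calC_\alpha=\{C^\alpha_i\st i<j_\alpha\}$ with $j_\alpha<\kappa$. Devise a labeling $\xi:\{(\alpha,i)\st i<j_\alpha\}\To\lambda$ that faithfully codes each club $C^\alpha_i$ (for instance via a code of $\otp(C^\alpha_i)$ together with a sufficient bounded portion of $C^\alpha_i$) and set $T_{\alpha,i}:=S_{\xi(\alpha,i)}$, which is a stationary subset of $S$.

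Apply $\DSR{{<}\kappa,S}$ to $\seq{T_{\alpha,i}}{\alpha<\lambda,\ i<j_\alpha}$ to obtain $\gamma<\lambda$ of uncountable cofinality and a club $F\sub\gamma$ such that for all $\alpha\in F$ and $i<j_\alpha$, $T_{\alpha,i}\cap\gamma$ is stationary in $\gamma$. Since the $S_\xi$ are pairwise disjoint, this reflection pins down a strong compatibility condition on the labels $\xi(\alpha,i)$ for $\alpha\in F$, which by the faithfulness of the labeling translates into genuine coherence among the chosen clubs $C^\alpha_i$. Pick $D\in\calC_\gamma$; by coherence of $\vec{\calC}$, $D\cap\beta\in\calC_\beta$ for every limit point $\beta<\gamma$ of $D$, so $D\cap\beta=C^\beta_{i(\beta)}$ for some $i(\beta)<j_\beta$. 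Combining this canonical selection with the compatibility forced by simultaneous reflection yields a club $T_\gamma\sub\gamma$ threading $\vec{\calC}\rest\gamma$; applying the argument uniformly at cofinally many reflection points and invoking a pigeonhole on $|\calC_\gamma|<\kappa<\lambda$ stitches these local threads into a global thread $T\sub\lambda$ through $\vec{\calC}$, contradicting $\square(\lambda,{<}\kappa)$.

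The main obstacle is calibrating the labeling $\xi(\alpha,i)$: it must be coarse enough that each $T_{\alpha,i}$ remains stationary (so that $\DSR{{<}\kappa,S}$ can be invoked), yet fine enough that simultaneous reflection of the $T_{\alpha,i}$'s truly forces coherence between the corresponding clubs across distinct $\alpha$'s, rather than a mere formal agreement of labels. A secondary, but still delicate, point is extracting a genuinely unbounded thread from the local reflection data; here the regularity of $\lambda$, the width bound $\kappa<\lambda$, and the intrinsic coherence of $\vec{\calC}$ must cooperate in a Fodor- or pigeonhole-style stitching, and it is at this step that the details require the most care.
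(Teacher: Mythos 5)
There is a genuine gap, and it lies exactly at the two places you flag as ``obstacles'': they are not technical details to be calibrated, they are the entire content of the theorem, and your framework cannot supply them. The stationary sets you feed into $\DSR{{<}\kappa,S}$ are re-labeled pieces $S_{\xi(\alpha,i)}$ of an arbitrary disjoint partition of $S$, chosen independently of the combinatorial structure of $\vec{\calC}$. Simultaneous reflection of such sets at a common point $\gamma$ carries no information whatsoever about the clubs $C^\alpha_i$: making many pairwise disjoint, mutually unrelated stationary sets reflect at one point is precisely what $\DSR{{<}\kappa,S}$ asserts, so their disjointness creates no tension and ``pins down'' nothing. The implication you need --- that stationarity in $\gamma$ of $S_{\xi(\alpha,i)}\cap\gamma$ and $S_{\xi(\beta,j)}\cap\gamma$ forces a coherence relation between $C^\alpha_i$ and $C^\beta_j$ --- is simply false for any labeling, because the reflection behavior of a pre-chosen stationary set cannot recover the club it allegedly codes. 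The second step fails for a related reason: a ``local thread'' below $\gamma$ is trivial (any $D\in\calC_\gamma$ threads $\vec{\calC}\rest\gamma$ by coherence), and stitching local threads at cofinally many points into a global thread is exactly the no-thread problem one is trying to exploit; a pigeonhole on $|\calC_\gamma|<\kappa$ does not let you choose, across $\lambda$ many levels, clubs that cohere with one another.

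For comparison (this paper only quotes the result from \cite{Fuchs:DiagonalReflection}; the proof is there, not here): the stationary sets must be manufactured \emph{from} the square sequence, so that their reflection collides with coherence. One takes, for each limit $\alpha<\lambda$ and each $i<j_\alpha$,
\[S_{\alpha,i}=\{\delta\in S\st\delta>\alpha\ \text{and for all}\ D\in\calC_\delta,\ D\cap\alpha\neq C^\alpha_i\},\]
i.e.\ the set of levels at which $C^\alpha_i$ does not survive. Modulo verifying that these sets are stationary (this is where the remaining work lies: if some $S_{\alpha,i}$ is nonstationary, then club-many $\delta\in S$ admit coherent extensions of $C^\alpha_i$, and one extracts a thread from that configuration), the principle $\DSR{{<}\kappa,S}$ then gives an immediate \emph{local} contradiction, with no global stitching at all: let $\gamma$ and the club $F\sub\gamma$ witness reflection, pick any $D\in\calC_\gamma$, and pick $\alpha\in F$ a limit point of $D$, so that $D\cap\alpha=C^\alpha_i$ for some $i<j_\alpha$. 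Since $S_{\alpha,i}\cap\gamma$ is stationary in $\gamma$ and the set of limit points of $D$ is club in $\gamma$ (here $\cf(\gamma)>\omega$ is used), there is a $\delta\in S_{\alpha,i}$ with $\delta>\alpha$ which is a limit point of $D$; but then $D\cap\delta\in\calC_\delta$ and $(D\cap\delta)\cap\alpha=C^\alpha_i$, contradicting $\delta\in S_{\alpha,i}$. Note how the definition of $S_{\alpha,i}$ is what makes reflection interact with coherence; your labeling scheme has no mechanism playing that role, so the proposal cannot be repaired by adjusting the coding --- it needs to be replaced by sets of this kind.
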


In fact, it was shown in \cite{Fuchs-LambieHanson:SeparatingDSR} that this theorem remains true if only the version of the diagonal reflection principle is assumed in which $F$ in Definition \ref{defn:DiagonalReflection} is only required to be stationary rather than closed and unbounded.
The connection between Aronszajn tree preservation and diagonal stationary reflection is:

\begin{lem}
\label{lem:wATPimpliesDSR}
If $2^\omega<\lambda=\lambda^\omega$ is regular and $\wATP{\omega_1,\lambda}{\SC}$ holds, then $\DSR{\omega_1,S^\lambda_\omega}$ holds.
\end{lem}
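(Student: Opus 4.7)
The plan is to adapt the strategy of Observation \ref{obs:ATPimpliesSFP} (which derives $\SFP{\tau}$ from Aronszajn preservation), adding an extra layer of bookkeeping to accommodate the fact that $\DSR{\omega_1,S^\lambda_\omega}$ involves an $\omega_1$-indexed family of stationary sets attached to each $\alpha<\lambda$. Assume for contradiction that $\DSR{\omega_1,S^\lambda_\omega}$ fails, and fix a counterexample sequence $\langle S_{\alpha,i}:\alpha<\lambda,\,i<j_\alpha\rangle$ with $j_\alpha\le\omega_1$, so that no $\gamma<\lambda$ of uncountable cofinality supports a club $F\subseteq\gamma$ with $S_{\alpha,i}\cap\gamma$ stationary in $\gamma$ for all $\alpha\in F$, $i<j_\alpha$. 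Fix a partition $\langle D_\zeta:\zeta<\omega_1\rangle$ of $S^{\omega_1}_\omega$ into $\omega_1$ many stationary sets and a bijection $h:\omega_1\times\omega_1\to\omega_1$.

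Define the tree $T$ whose nodes at level $\beta<\omega_1$ are continuous, strictly increasing maps $f:\beta\to\lambda$ such that whenever $\eta<\beta$ is of countable cofinality with $\eta\in D_{h(\xi,i)}$ for $\xi<\eta$ and $i<j_{f(\xi)}$, then $f(\eta)\in S_{f(\xi),i}$, ordered by end-extension. The tree has width at most $\lambda$, since $\lambda^{\omega}=\lambda$ bounds the number of continuous increasing maps from any countable ordinal into $\lambda$. It has height $\omega_1$: for each $\beta<\omega_1$ one builds $f:\beta\to\lambda$ by recursion, exploiting countability of $\beta$ so that only countably many constraints must be respected; at a constrained limit $\eta$ with required value in $S_{f(\xi),i}$, one arranges the successor stages cofinal in $\eta$ to converge to some element of $S_{f(\xi),i}$, which is possible since that set is stationary (hence unbounded on every tail) in $\lambda$. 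A standard bookkeeping argument interleaves these cofinal approaches over the countably many relevant limits below $\beta$.

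Crucially, $T$ is $(\omega_1,\lambda)$-Aronszajn in $V$: a cofinal branch would yield $f:\omega_1\to\lambda$ continuous and strictly increasing, giving $\gamma=\sup f<\lambda$ with $\cf^V(\gamma)=\omega_1$ and $F=\{f(\xi):\xi<\omega_1\text{ limit}\}$ club in $\gamma$. For any such $\xi$, setting $\alpha=f(\xi)$, and any $i<j_\alpha$, the constraint in the definition of $T$ forces $f$ to map the tail $D_{h(\xi,i)}\setminus(\xi+1)$ (stationary in $\omega_1$) into $S_{\alpha,i}$; continuity of $f$ then upgrades this to stationarity of $S_{\alpha,i}\cap\gamma$ in $\gamma$. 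So $\gamma$ and $F$ would witness the relevant instance of $\DSR{\omega_1,S^\lambda_\omega}$, contradicting our choice of $\vec{S}$.

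The main obstacle is the final step: producing a subcomplete (ideally countably distributive) forcing that adds a cofinal branch through $T$, which by $\wATP{\omega_1,\lambda}{\SC}$ delivers the desired contradiction. I would use the natural tree forcing $T$ itself, ordered by reverse end-extension, and argue subcompleteness in direct analogy with Jensen's treatment of the Friedman-property forcing in \cite[pp.~134, Lemma~6.3 and pp.~154, Lemma~7.1]{Jensen2014:SubcompleteAndLForcingSingapore}, invoked already in the proof of Observation \ref{obs:ATPimpliesSFP}. The verification hinges on $2^\omega<\lambda$ and $\lambda^\omega=\lambda$ and uses the stationarity of each $S_{f(\xi),i}$ to thread countable elementary submodel approximations through the tree; the hypotheses on $\lambda$ available to us match exactly those needed for Jensen's argument, so no new subcompleteness verification beyond a direct parallel should be required.
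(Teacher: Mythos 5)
Your proposal takes a genuinely different and more self-contained route than the paper. The paper builds no tree at all: it takes the known forcing $\P_{\vA,\vT}$ of \cite[Definition 2.23, Lemma 2.24]{Fuchs:HierarchiesOfForcingAxioms} (subcomplete when $2^\omega<\lambda$, and countably distributive by Larson's presentation of it in \cite{Larson:SeparatingSRP}), converts $\wATP{\omega_1,\lambda}{\Gamma}$ for $\Gamma$ the class of subcomplete countably distributive forcings into $\BFA_\Gamma(\omega_1,\lambda)$ via Theorem \ref{thm:CharacterizationOfATP(omega1,lambda)} (this is where $[\lambda]^\omega$-preservation and $\lambda^\omega=\lambda$ enter), and then quotes \cite[Theorem 4.8]{Fuchs:DiagonalReflection}, which extracts the reflection point from the bounded forcing axiom applied to $\P_{\vA,\vT}$. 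You instead generalize Observation \ref{obs:ATPimpliesSFP}: build the tree of partial witnesses, note that a cofinal branch in $\V$ yields a reflection point, and contradict Aronszajn tree preservation by forcing a branch. This bypasses the $\BFA$/$\Sigma^1_1$ machinery and the countable-distributivity bookkeeping entirely, and your ``natural tree forcing'' is, modulo presentation, exactly the forcing $\P_{\vA,\vT}$ that the paper cites; so your appeal to a direct parallel with Jensen's subcompleteness proof is legitimate --- that parallel is precisely the content of the cited Lemma 2.24, with $2^\omega<\lambda$ as the needed hypothesis.

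Two points need repair, one cosmetic and one substantive. Cosmetically, conditions of the forcing should be restricted to successor length: a node of limit length whose supremum misses its required stationary set is a dead end, and below such a condition the forcing adds no cofinal branch, so $\wATP{\omega_1,\lambda}{\SC}$ (which only demands \emph{some} condition preserving Aronszajn-ness) would not be contradicted. Substantively, your justification that $T$ has height $\omega_1$ --- ``possible since that set is stationary (hence unbounded on every tail)'' plus bookkeeping --- does not work at limits of constrained limits. If $\eta$ is a limit of limits $\eta'$, each carrying its own constraint $f(\eta')\in S_{f(\xi'),i'}$, then to make $\sup f``\eta$ land in the target $S_{f(\xi),i}$ you must first fix a target $\delta\in S_{f(\xi),i}$ and then realize all intermediate constraints \emph{below} $\delta$; unboundedness of the intermediate sets in $\lambda$ says nothing about their traces below $\delta$, which can be empty. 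The repair is the Foreman--Magidor--Shelah argument that the paper cites in Observation \ref{obs:ATPimpliesSFP} for the analogous claim \cite{FMS:MM1}: choose a countable $N\prec H_\theta$ containing all relevant data with $\sup(N\cap\lambda)\in S_{f(\xi),i}$ --- it is stationarity, not unboundedness, that produces such an $N$ --- and build the extension inside $N$, converging to $\sup(N\cap\lambda)$, using elementarity to meet each intermediate target unboundedly often inside $N\cap\lambda$. The same relativized statement (every successor-length condition extends to every countable length) is also what shows the forcing adds a cofinal branch below every condition, which your final contradiction requires.
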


\begin{proof}
Let $S_{\alpha,i}\sub S^\lambda_\omega$ be stationary, for $\alpha<\mu$ and $i<\omega_1$. Let $c:\lambda\To\lambda\times\omega_1$ be a bijection, and let $T_\alpha=S_{c(\alpha)}$, for $\alpha<\lambda$. Fix a partition $\seq{A_i}{i<\omega_1}$ of $\omega_1$ into stationary sets. Let $\P=\P_{\vA,\vT}$ be the forcing described in \cite[Definition 2.23]{Fuchs:HierarchiesOfForcingAxioms}. It is shown in \cite[Lemma 2.24]{Fuchs:HierarchiesOfForcingAxioms} that $\P$ is subcomplete. The proof needs the additional assumption that $\lambda>2^\omega$, which we made here. In \cite{Larson:SeparatingSRP}, this forcing is presented as a composition of $\Col{\omega_1,\lambda}$ and a forcing to shoot a club through some stationary subset of $\omega_1$. Since both of these are countably distributive, so is the composition, $\P$. Let $\Gamma$ be the class of forcing notions that are both subcomplete and countably distributive. We then have $\wATP{\omega_1,\lambda}{\Gamma}$, which implies by Theorem \ref{thm:CharacterizationOfATP(omega1,lambda)} that $\BFA_\Gamma(\omega_1,\lambda)$ holds. Now \cite[Theorem 4.8]{Fuchs:DiagonalReflection} shows that $\BSCFA(\lambda)$ implies the existence of a simultaneous reflection point for the instance of $\DSR{\omega_1,\lambda}$ given by $\vec{S}$, using the fact that $\P$ is subcomplete. But since that forcing is also countably distributive, it clearly is enough to have $\BFA_\Gamma(\omega_1,\lambda)$ to draw this conclusion, and we are done.
\end{proof}

Putting these previous two results together, one obtains:

\begin{lem}
\label{lem:FromATPtoFailureOfSquare}
Suppose $2^\omega<\lambda=\lambda^\omega$, and \wATP{\omega_1,\lambda}{\SC} holds.
\begin{enumerate}
  \item If $\lambda=\omega_2$, then $\square(\omega_2,\omega)$ fails,
  \item if $\lambda>\omega_2$, then $\square(\lambda,\omega_1)$ fails.
\end{enumerate}
\end{lem}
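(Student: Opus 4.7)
The plan is to derive this statement immediately from the two preceding results, Lemma~\ref{lem:wATPimpliesDSR} and Theorem~\ref{thm:DSRimpliesFailureOfSquare}. First I would invoke Lemma~\ref{lem:wATPimpliesDSR} under the standing hypothesis $2^\omega<\lambda=\lambda^\omega$ (with $\lambda$ regular, which is clear in case~(1) and which I take to be part of the hypothesis in case~(2)) to obtain $\DSR{\omega_1,S^\lambda_\omega}$. This reduces the lemma to matching up the index-bound conventions in the two index-hierarchies $\DSR{}$ and $\square(\lambda,{<}\kappa)$, and then feeding the outcome into Theorem~\ref{thm:DSRimpliesFailureOfSquare}.

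For case~(1), $\lambda=\omega_2$. The principle $\DSR{\omega_1,S^{\omega_2}_\omega}$ handles sequences indexed by $j_\alpha\le\omega_1$, hence a fortiori any sequence with $j_\alpha<\omega_1$, so $\DSR{{<}\omega_1,S^{\omega_2}_\omega}$ holds as well. Applying Theorem~\ref{thm:DSRimpliesFailureOfSquare} with $\kappa=\omega_1$ then yields $\neg\square(\omega_2,{<}\omega_1)$, which, under the convention $\square(\lambda,\kappa)=\square(\lambda,{<}\kappa^+)$ recorded after Definition~\ref{defn:WeakThreadedSquares}, is exactly $\neg\square(\omega_2,\omega)$.

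For case~(2), $\lambda>\omega_2$. Here the bound $j_\alpha\le\omega_1$ in $\DSR{\omega_1,S^\lambda_\omega}$ is literally the same constraint as $j_\alpha<\omega_2$, so that what we have in hand is already $\DSR{{<}\omega_2,S^\lambda_\omega}$. Theorem~\ref{thm:DSRimpliesFailureOfSquare} with $\kappa=\omega_2$ then gives $\neg\square(\lambda,{<}\omega_2)=\neg\square(\lambda,\omega_1)$.

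There is no real obstacle to overcome beyond keeping the two different index-bound conventions straight; all the substantive work has been done in Lemma~\ref{lem:wATPimpliesDSR} (which is where the subcomplete club-shooting forcing is used, together with Theorem~\ref{thm:CharacterizationOfATP(omega1,lambda)}) and in Theorem~\ref{thm:DSRimpliesFailureOfSquare} from \cite{Fuchs:DiagonalReflection}. The only thing to be mildly careful about is the split between cases~(1) and~(2), which is forced by the fact that Theorem~\ref{thm:DSRimpliesFailureOfSquare} requires $\kappa<\lambda$: for $\lambda=\omega_2$ one is obliged to use $\kappa=\omega_1$, whereas for $\lambda>\omega_2$ one can afford $\kappa=\omega_2$ and thereby obtain the stronger conclusion ruling out width-$\omega_1$ coherent sequences.
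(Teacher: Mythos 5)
Your proposal is correct and is exactly the paper's argument: the paper introduces this lemma with the words ``Putting these previous two results together, one obtains,'' i.e., it is obtained precisely by feeding Lemma~\ref{lem:wATPimpliesDSR} into Theorem~\ref{thm:DSRimpliesFailureOfSquare}, with the case split forced by the requirement $\kappa<\lambda$ just as you describe. The only cosmetic point is that in case~(2) the passage from $j_\alpha\le\omega_1$ to $j_\alpha<\omega_2$ is by re-indexing each block of size at most $\omega_1$ rather than being ``literally the same constraint,'' but this is immaterial.
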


Note that $2^\omega\le\omega_2+\wATP{\omega_1,\omega_2}{\SC}+\square(\omega_2,\omega_1)$ is consistent if $\wATP{\omega_1,\omega_2}{\SC}$ is, because starting in a model of $\wATP{\omega_1,\omega_2}{\SC}$, it follows that $\omega_2$ is unfoldable in $L$, and one can force over $L$ to produce a model of $\wATP{\omega_1,\omega_2}{\SC}+\CH$, which implies the weak square $\square^*_{\omega_1}$, which implies $\square(\omega_2,\omega_1)$.

We can now say something about the consistency strength of Aronszajn tree preservation by subcomplete forcing higher above $2^\omega$.

\begin{lem}
$\wATP{\omega_1,(2^\omega)^{++}}{\SC}$ implies $\AD^{L(\reals)}$.
\end{lem}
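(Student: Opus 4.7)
The plan is to reduce the statement to a failure of a combinatorial square principle and then invoke a deep inner model theoretic theorem. All the heavy combinatorial work has already been packaged into Lemma \ref{lem:FromATPtoFailureOfSquare}, so the first part of the plan is a straightforward verification of hypotheses; the \emph{real} content comes from the second part, a core model induction.

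First, set $\lambda = (2^\omega)^{++}$. Since $\lambda$ is a double successor cardinal, $\cf(\lambda) = \lambda > \omega$, so $\lambda^\omega = \lambda$. We also clearly have $\lambda > 2^\omega$ and $\lambda > \omega_2$. Hence the hypotheses of Lemma \ref{lem:FromATPtoFailureOfSquare} are satisfied, and case (2) of that lemma gives that $\square((2^\omega)^{++}, \omega_1)$ fails.

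Second, invoke a theorem from inner model theory asserting that the failure of $\square(\lambda,\omega_1)$ at a cardinal $\lambda$ of this size (a double successor above the continuum) implies $\AD^{L(\reals)}$. This is where the weight of the proof sits: results of this flavor are obtained by running core model induction, pioneered by Steel (and extended by Schimmerling, Zeman, Sargsyan, and others), showing that if $\AD^{L(\reals)}$ fails then one can construct a suitable core model $K$ in which $\square_\kappa$, and hence $\square(\kappa^+,\omega_1)$, holds at every sufficiently large $\kappa$; this $K$-side square then transfers up to $V$ by covering-type arguments, contradicting the failure obtained in the first step.

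The main obstacle is the second step, and it is not something one proves from scratch here; one needs to cite the correct form of the core-model-induction theorem and verify that our specific failure (namely $\square((2^\omega)^{++},\omega_1)$, with the specific cardinal arithmetic that this $\lambda$ is a double successor of the continuum) feeds into its hypotheses. Modulo that citation, the argument is essentially a one-line deduction from Lemma \ref{lem:FromATPtoFailureOfSquare}.
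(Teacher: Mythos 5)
Your reduction extracts too little from Lemma \ref{lem:FromATPtoFailureOfSquare}, and the inner-model-theoretic theorem you then invoke does not exist in the form you need. The theorem that is actually available (Schimmerling \cite[Theorem 5.6]{Schimmerling:CoherentSequencesAndThreads}, with Steel's improvement of the conclusion to $\AD^{L(\reals)}$) requires that $\square_\kappa$ \emph{and} $\square(\kappa)$ both fail \emph{at the same} regular $\kappa\ge\max(2^\omega,\omega_2)$. From your single failure $\neg\square((2^\omega)^{++},\omega_1)$ one can read off $\neg\square_{(2^\omega)^+}$ (since $\square_{(2^\omega)^+}$ implies $\square((2^\omega)^{++})$, hence $\square((2^\omega)^{++},\omega_1)$) and $\neg\square((2^\omega)^{++})$, but this pair is mismatched: to apply the theorem you would need either $\neg\square((2^\omega)^+)$ to go with $\neg\square_{(2^\omega)^+}$, or $\neg\square_{(2^\omega)^{++}}$ to go with $\neg\square((2^\omega)^{++})$, and you have neither. (Your own sketch of the core model induction shows where this bites: the failure of $\square(\kappa)$ at the same $\kappa$ is what is used to show that the core model computes $\kappa^+$ correctly, so that its $\square_\kappa$-sequence transfers to $\V$; without it, the covering-type transfer you describe breaks down.) The missing idea is that $\wATP{\omega_1,\lambda}{\SC}$ is downward monotone in the width $\lambda$: a tree of width ${\le}(2^\omega)^+$ is in particular a tree of width ${\le}(2^\omega)^{++}$, so your hypothesis also yields $\wATP{\omega_1,(2^\omega)^+}{\SC}$. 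The paper therefore applies Lemma \ref{lem:FromATPtoFailureOfSquare} \emph{twice}: at $\lambda=(2^\omega)^+$ it gives $\neg\square((2^\omega)^+,\omega)$ (via case (1) or case (2) according to whether $\CH$ holds), hence $\neg\square((2^\omega)^+)$; at $\lambda=(2^\omega)^{++}$ it gives $\neg\square((2^\omega)^{++},\omega_1)$, hence $\neg\square_{(2^\omega)^+}$. Both hypotheses of the Schimmerling--Steel theorem then hold at $\kappa=(2^\omega)^+$, and the conclusion follows.

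A secondary problem: your justification of $\lambda^\omega=\lambda$ is incorrect. Regularity of $\lambda$ alone does not give $\lambda^\omega=\lambda$ (consider $\lambda=\omega_1$ when $\CH$ fails, or $\lambda=\aleph_{\omega+1}$ when $\aleph_\omega^\omega$ is large). What saves the conclusion here is Hausdorff's formula applied above the continuum: $((2^\omega)^{++})^\omega=(2^\omega)^{++}\cdot((2^\omega)^+)^\omega=(2^\omega)^{++}\cdot(2^\omega)^+\cdot(2^\omega)^\omega=(2^\omega)^{++}$; the analogous computation $((2^\omega)^+)^\omega=(2^\omega)^+$ is also needed for the second application of Lemma \ref{lem:FromATPtoFailureOfSquare} described above.
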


\begin{proof}
Since $((2^\omega)^+)^\omega=(2^\omega)^+$ and $((2^\omega)^{++})^\omega=(2^\omega)^{++}$,
Lemma \ref{lem:FromATPtoFailureOfSquare} applies, showing that both $\square((2^\omega)^+,\omega)$ and $\square((2^\omega)^{++},\omega_1)$ fail. Now  \cite[Theorem 5.6]{Schimmerling:CoherentSequencesAndThreads} states that if $\kappa\ge\max(2^\omega,\omega_2)$ is a regular cardinal such that $\square_\kappa$ and $\square(\kappa)$ both fail, then $M_n^\#(X)$ exists for all bounded subsets $X$ of $\kappa^+$ and for all $n<\omega$, and moreover, this conclusion was improved by Steel to $\AD^{L(\reals)}$. If we let $\kappa=(2^\omega)^+$, then the assumptions of this theorem are satisfied: since $\square(\kappa,\omega)$ fails, so does $\square(\kappa)$. And since $\square(\kappa^+,\omega_1)$ fails, so does $\square_\kappa$.
\end{proof}

Clearly, the assumption $\wATP{\omega_1,(2^\omega)^{++}}{\SC}$ in the previous lemma can be weakened to $\wATP{\omega_1,(2^\omega)^{++}}{\Gamma}$, where $\Gamma$ is the class of all subcomplete and countably distributive forcing notions.

Thus, for $\lambda\ge 2^\omega$, $\wATP{\omega_1,\lambda}{\SC}$ and $\BSCFA(\omega_1,\lambda)$ behave very similarly in terms of consistency strength, and for $\lambda=2^\omega$, they are actually equivalent. It is thus natural to ask whether they can be separated for $\lambda\neq 2^\omega$. Clearly, this is possible if $\lambda<2^\omega$, because then $\wATP{\omega_1,\lambda}{\SC}$ is a \ZFC fact, while $\BSCFA(\omega_1,\lambda)$ is not. The question is what happens if $\lambda>2^\omega$. Let us again focus on the case that $\lambda=\omega_2>2^\omega=\omega_1$. Recall that in this case, we know that $\wATP{\omega_1,\lambda}{\SCabove{\lambda}}$ is equivalent to $\BFA_{\SCabove{\lambda}}(\omega_1,\lambda)$. Here is a related question.

\begin{question}
Under \CH, is $\BSCFA(\omega_1,\omega_2)$ equivalent to $\BFA_{\SCabove{\omega_2}}(\omega_1,\omega_2)$?
\end{question}

Observation \ref{obs:Impossibility} is relevant here. In fact, that observation resulted from a failed attempt to prove that the answer is negative.
The main question that remains concerns the relationship between Aronszajn tree preservation and the bounded forcing axiom for subcomplete forcing above the continuum:

\begin{question}
Suppose $\lambda>2^\omega$ is regular. Does \wATP{\omega_1,\lambda}{\SC} imply $\BSCFA(\omega_1,\lambda)$?
\end{question}

The case $\lambda=\omega_2>2^\omega=\omega_1$ is of particular interest here. Note that in this case, there is a countably closed forcing of size $\omega_2$ that preserves $\omega_2$ and adds a $\square_{\omega_1}$-sequence, and thus destroys \wATP{\omega_1,\omega_2}{\SC}, by Lemma \ref{lem:FromATPtoFailureOfSquare}. Finally, a fundamental question is as follows:

\begin{question}
Suppose $\BSCFA(\omega_1,\omega_1)$ or $\wATP{\omega_1,2^\omega}{\SC}$ holds. Is it possible that $2^\omega>\omega_2$?
\end{question}


\begin{thebibliography}{10}

\bibitem{BagariaGitmanSchindler:RemarkableWeakPFA}
Joan Bagaria, Victoria Gitman, and Ralf Schindler.
\newblock Remarkable cardinals, structural reflection, and the weak proper
  forcing axiom.
\newblock {\em Archive for Mathematical Logic}, 56(1):1--20, 2017.

\bibitem{ClaverieSchindler:AxiomStar}
Benjamin Claverie and Ralf Schindler.
\newblock Woodin's axiom {$(*)$}, bounded forcing axioms, and precipitous
  ideals on {$\omega_1$}.
\newblock {\em Journal of Symbolic Logic}, 77(2):475--498, 2012.

\bibitem{FMS:MM1}
Matthew Foreman, Menachem Magidor, and Saharon Shelah.
\newblock Martin's maximum, saturated ideals, and non-regular ultrafilters.
  {P}art {I}.
\newblock {\em Annals of Mathematics}, 127(1):1--47, 1988.

\bibitem{Fuchs:ParametricSubcompleteness}
Gunter Fuchs.
\newblock Closure properties of parametric subcompleteness.
\newblock {\em Archive for Mathematical Logic}, 57(7-8):829--852, 2018.

\bibitem{Fuchs:HierarchiesOfForcingAxioms}
Gunter Fuchs.
\newblock Hierarchies of forcing axioms, the continuum hypothesis and square
  principles.
\newblock {\em Journal of Symbolic Logic}, 83(1):256--282, 2018.

\bibitem{Fuchs:SCprinciplesAndDefWO}
Gunter Fuchs.
\newblock Subcomplete forcing principles and definable well-orders.
\newblock {\em Mathematical Logic Quarterly}, 64(6):487--504, 2018.

\bibitem{Fuchs:DiagonalReflection}
Gunter Fuchs.
\newblock Diagonal reflections on squares.
\newblock {\em Archive for Mathematical Logic}, 58(1):1--26, 2019.

\bibitem{Fuchs-LambieHanson:SeparatingDSR}
Gunter Fuchs and Chris Lambie-Hanson.
\newblock Separating diagonal stationary reflection principles.
\newblock {\em In progress}, 2019.

\bibitem{FuchsMinden:SCforcingTreesGenAbs}
Gunter Fuchs and Kaethe Minden.
\newblock Subcomplete forcing, trees and generic absoluteness.
\newblock {\em Journal of Symbolic Logic}, 83(3):1282--1305, 2018.

\bibitem{GoldsternShelah:BPFA}
Martin Goldstern and Saharon Shelah.
\newblock The bounded proper forcing axiom.
\newblock {\em Journal of Symbolic Logic}, 60(1):58--73, 1995.

\bibitem{ST3}
Thomas Jech.
\newblock {\em Set {T}heory: {T}he {T}hird {M}illenium {E}dition, {R}evised and
  {E}xpanded}.
\newblock Springer {M}onographs in {M}athematics. Springer, Berlin, Heidelberg,
  2003.

\bibitem{FS}
Ronald~B. Jensen.
\newblock The fine structure of the constructible hierarchy.
\newblock {\em Annals of Mathematical Logic}, 4:229--308, 1972.

\bibitem{Jensen:SPSCF}
Ronald~B. Jensen.
\newblock Subproper and subcomplete forcing.
\newblock 2009.
\newblock Handwritten notes, available at
  https://www.mathematik.hu-berlin.de/\textasciitilde raesch/org/jensen.html.

\bibitem{Jensen2014:SubcompleteAndLForcingSingapore}
Ronald~B. Jensen.
\newblock Subcomplete forcing and {${\mathcal{L}}$}-forcing.
\newblock In Chitat Chong, Qi~Feng, Theodore~A. Slaman, W.~Hugh Woodin, and Yue
  Yang, editors, {\em {E}-recursion, forcing and {$C^*$}-algebras}, volume~27
  of {\em Lecture Notes Series, Institute for Mathematical Sciences, National
  University of Singapore}, pages 83--182, Singapore, 2014. World Scientific.

\bibitem{Jensen:IterationTheorems}
Ronald~B. Jensen.
\newblock Iteration theorems for subcomplete and related forcings.
\newblock Handwritten notes, available at
  https://www.mathematik.hu-berlin.de/\textasciitilde raesch/org/jensen.html,
  2014-15.

\bibitem{Jensen:SCofNambaTypeForcings}
Ronald~B. Jensen.
\newblock On the subcompleteness of some {N}amba-type forcings.
\newblock Handwritten notes, available at
  https://www.mathematik.hu-berlin.de/\textasciitilde raesch/org/jensen.html,
  2017.

\bibitem{Larson:SeparatingSRP}
Paul Larson.
\newblock Separating stationary reflection principles.
\newblock {\em Journal of Symbolic Logic}, 65(1):247--258, 2000.

\bibitem{MildenbergerShelah:SpecialisingAronszajnTrees}
Heike Mildenberger and Saharon Shelah.
\newblock Specialising {A}ronszajn trees and preserving some weak diamonds.
\newblock {\em Journal of Applied Analysis}, 15(1):47--78, 2009.

\bibitem{Kaethe:Diss}
Kaethe Minden.
\newblock {\em On subcomplete forcing}.
\newblock PhD thesis, The CUNY Graduate Center, 2017.
\newblock Preprint: arXiv:1705.00386 [math.LO].

\bibitem{Miyamoto:WeakSegmentsOfPFA}
Tadatoshi Miyamoto.
\newblock A note on weak segments of {PFA}.
\newblock In C.T. Chong, Q.~Feng, D.~Ding, Q.~Huang, and M.~Yasugi, editors,
  {\em Proceedings of the Sixth Asian Logic Conference}, pages 175--197, 1998.

\bibitem{Schimmerling:CoherentSequencesAndThreads}
Ernest Schimmerling.
\newblock Coherent sequences and threads.
\newblock {\em Advances in Mathematics}, 216:89--117, 2007.

\bibitem{Villaveces:StrongUnfoldability}
Andr\'{e}s Villaveces.
\newblock Chains of end elementary extensions of models of set theory.
\newblock {\em Journal of Symbolic Logic}, 63(3):1116--1136, 1998.

\bibitem{Zapletal:BNFAmayfail}
Jind\v{r}ich Zapletal.
\newblock Bounded {N}amba forcing axiom may fail.
\newblock {\em Mathematical Logic Quarterly}, 64(3):170--172, 2018.

\end{thebibliography}

\end{document}